\theoremstyle{plain}
\newtheorem{theorem}{Theorem}[section]
\newtheorem{corollary}[theorem]{Corollary}
\newtheorem{lemma}[theorem]{Lemma}
\newtheorem{proposition}[theorem]{Proposition}
\newtheorem{definition-lemma}[theorem]{Definition-Lemma}
\theoremstyle{remark}
\newtheorem{remark}[theorem]{Remark}
\theoremstyle{definition}
\newcommand{\red}[0]{\operatorname{red}}
\def\ddbar{\partial\bar\partial}
\def\ve{\varepsilon}
\def\BC{\operatorname{BC}}
\def\NA{\overline{\operatorname{NA}}}
\def\Null{\operatorname{Null}}
\newcommand{\<}{\leq}
\def\>{\geq}
\newcommand{\mbQ}{\mathbb{Q}}
\newcommand{\mbR}{\mathbb{R}}
\def\mcO{\mathcal{O}}
\newcommand{\num}{\equiv}
\newcommand{\OO}{{\mathcal{O}}}
\newcommand{\Q}{{\mathbb{Q}}}
\newcommand{\R}{{\mathbb{R}}}
\newcommand{\mult}{{\rm mult}}
\newcommand{\Supp}{{\rm Supp}}
\newcommand{\mbC}{\mathbb{C}}
\newcommand{\mbZ}{\mathbb{Z}}
\newcommand{\bir}{\dashrightarrow}
\newcommand{\vphi}{\varphi}
\def\injective{\hookrightarrow}
\def\surjective{\twoheadrightarrow}
\def\lrd{\lfloor}
\def\rrd{\rfloor}
\def\>{\geq}
\def\ve{\varepsilon}
\def\mcO{\mathcal{O}}
\def\mcC{\mathcal{C}}
\def\mcE{\mathcal{E}}
\def\mcL{\mathcal{L}}
\def\eps{\epsilon}
\def\lru{\lceil}
\def\rru{\rceil}
\def\lrd{\lfloor}
\def\rrd{\rfloor}
\def\bbeta{\boldsymbol{\beta}}
\def\aalpha{\boldsymbol{\alpha}}
\def\bomega{\boldsymbol{\omega}}
\def\pt{\operatorname{pt}}
\def\Ex{\operatorname{Ex}}
\def\dim{\operatorname{dim}}
\def\codim{\operatorname{codim}}
\def\sing{\operatorname{\textsubscript{sing}}}
\def\sm{\operatorname{\textsubscript{sm}}}
\def\NA{\operatorname{\overline{NA}}}
\def\Im{\operatorname{Im}}
\def\Spec{\operatorname{Spec}}
\def\ddbar{\partial\bar{\partial}}
\theoremstyle{definition}
\newtheorem{definition}[theorem]{Definition}
\theoremstyle{definition}
\numberwithin{equation}{section}
\theoremstyle{remark}
\newtheorem{claim}[theorem]{Claim}
\title{On the Minimal Model Program for K\"ahler 3-folds}
\author{Omprokash Das}
\address{School of Mathematics\\
Tata Institute of Fundamental Research\\
Homi Bhabha Road, Navy Nagar\\
Colaba, Mumbai 400005}
\email{omdas@math.tifr.res.in}
\email{omprokash@gmail.com}
\thanks{Omprokash Das was partially supported by the Start--Up Research Grant(SRG), Grant No. \# SRG/2020/000348 of the Science and Engineering Research Board (SERB), Govt. Of India.}
\author{Christopher Hacon}
\address{Department of Mathematics\\
University of Utah\\
155 S 1400 E\\
Salt Lake City, Utah 84112, USA}
\email{hacon@math.utah.edu}
\thanks{Christopher Hacon was partially supported by the NSF research grants no: DMS-2301374, DMS-
1952522, DMS-1801851, and a grant from the Simons foundation No. 256202.}
\date{}
\begin{document}
\maketitle

\begin{abstract}
    In this article we prove the existence of pl-flipping and divisorial contractions and pl flips in dimension $n$ for compact K\"ahler varieties, assuming results of the minimal model program in dimension $n-1$. We also give a self contained proof of the cone theorem, the existence of flipping and divisorial contractions, of flips and minimal models in dimension 3.
\end{abstract}

\tableofcontents

\section{Introduction}
In recent years there has been substantial progress towards the minimal model program for K\"ahler varieties.
The 3-fold minimal model program is established in increasing generality in \cite{CP97}, \cite{Pet98}, \cite{Pet01}, \cite{HP15}, \cite{HP16}, \cite{CHP16}, \cite{DO23}, \cite{DH20}, and \cite{DHY23}.
Further progress was then made in \cite{Fuj22}, \cite{DHP22}, and \cite{LM22}, where the minimal model program for projective morphisms between analytic varieties is established in arbitrary dimensions. Note that flipping contractions are, by definition, projective morphisms and hence the above references imply the existence of flips in all dimensions. Unluckily, we do not have a K\"ahler analog of the base point free and cone theorems and hence this is the main difficulty in proving the minimal model program in higher dimensions. 

In dimension 3, the existence of flipping and divisorial contractions for terminal pairs was claimed in \cite{HP16}. Flipping contractions and divisorial contractions to a point for klt 3-folds follow from \cite{CHP16} and klt divisorial contractions to a curve follow from \cite{DH20}.
Unluckily, the proof of the existence of divisorial contractions to a curve  claimed in \cite{HP16} seems to be incomplete (and relies on \cite{AT84} which also seems to contain an error). To make matters more complicated, the arguments of \cite{CHP16} heavily rely on those of \cite{HP16}, and \cite{DH20} also indirectly relies on  \cite{HP16} and \cite{CHP16}.

 The purpose of this paper is to give a brief, unified, and mostly self contained treatment of the minimal model program for klt K\"ahler 3-folds. In particular, we give new proofs of the cone theorem, and the existence of flipping contractions and divisorial contractions for K\"ahler klt 3-folds.  In the process, we also establish an inductive framework that we expect to be useful in proving the existence of flipping contractions and divisorial contractions for higher dimensional K\"ahler klt pairs. The strategy of this paper is primarily inspired by the approaches of \cite{BCHM10}, \cite{DH20}, and \cite{DHP22}. %Another advantage of our approach is to give a new proof of the existence of 3-fold klt flipping contractions, which is independent (and we believe simplifies) the proof given in \cite{CHP16}. 

We will now describe the main results contained in this paper. We begin by proving the existence of pl-divisorial contractions and pl-flipping contractions and the corresponding flips in dimension $n$ assuming a base point free result in dimension $n-1$.
\begin{theorem}\label{t-pl}
    Let $(X,S+B)$ be a $\mbQ$-factorial compact K\"ahler plt pair of arbitrary dimension, and $\pi :S\to T$ a contraction such that $-(K_X+S+B)|_S$ and $-S|_S$ are $\pi$-ample. Then there exists a projective bimeromorphic morphism $p : X \to Z$ with connected fibers such that $p|_S = \pi$
and $p|_{X\setminus S}$ is an isomorphism. If $p$ is of flipping type, then the flip $p^+:X^+\to Z$ exists.
\end{theorem}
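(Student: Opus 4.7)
The plan is to use plt adjunction to reduce the data to the $(n-1)$-dimensional MMP on $S$, then use the $\pi$-ampleness of $-S|_S$ to extend the contraction globally from $S$ to $X$ by an analytic patching argument, and finally to invoke the MMP for projective morphisms of \cite{DHP22,Fuj22,LM22} over $Z$ to produce the flip when $p$ is small.

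\emph{Step 1 (adjunction and reduction to dimension $n-1$).} Since $(X,S+B)$ is plt with $S$ the unique component of coefficient one, $S$ is normal, and plt adjunction yields $(K_X+S+B)|_S=K_S+B_S$ with $(S,B_S)$ klt. The hypothesis $-(K_X+S+B)|_S$ being $\pi$-ample translates to $-(K_S+B_S)$ being $\pi$-ample, so $\pi:S\to T$ is a $(K_S+B_S)$-negative extremal contraction of the compact Kähler klt pair $(S,B_S)$ of dimension $n-1$. Applying the assumed MMP in dimension $n-1$, we conclude that $T$ is a normal compact Kähler variety, that $\pi$ is projective, and that $T$ carries a Kähler class $\omega_T$.

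\emph{Step 2 (constructing $p:X\to Z$).} I would construct $Z$ as the analytic pushout of $X\hookleftarrow S\xrightarrow{\pi} T$. In a tubular analytic neighborhood $U$ of $S$ in $X$, the $\pi$-ampleness of $-S|_S$ combined with relative Kodaira-type $\pi$-vanishing for powers of $-S$ allows one to invoke a Fujiki--Grauert contraction criterion to produce a proper bimeromorphic morphism $\tilde p:U\to V$ over $T$ extending $\pi$, with $\tilde p|_{U\setminus S}$ an isomorphism. Gluing $\tilde p$ with the identity on $X\setminus S$ yields the morphism $p:X\to Z$. By hypothesis, the class $-S$ is $p$-ample on every fiber of $p$ (the only nontrivial fibers lie in $S$ and are fibers of $\pi$), so $p$ is projective; and $Z$ is compact Kähler by patching $\omega_T$ from Step~1 with a Kähler class on $X\setminus S$ inherited from $X$, with the transition made consistent via the $p$-ampleness of $-S$.

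\emph{Step 3 and the main obstacle.} If $p$ is small (of flipping type), then $p:X\to Z$ is a $(K_X+S+B)$-negative small projective bimeromorphic morphism from the $\Q$-factorial compact Kähler plt pair $(X,S+B)$ to $Z$, and the flip $p^+:X^+\to Z$ is exactly a flip of a projective morphism in the Kähler setting (after a small perturbation making the pair klt while preserving negativity), whose existence is supplied by the MMP for projective morphisms of \cite{DHP22,Fuj22,LM22}. The main obstacle is Step~2: realizing $Z$ as a compact Kähler analytic space. Formally near $S$, the ampleness of $-S|_S$ provides all the positivity needed to contract, but the global Kähler structure on $Z$ requires a careful analytic patching argument, playing the role that $\Proj$ of a finitely generated restricted algebra plays in the projective setting of \cite{BCHM10}; the argument follows the philosophy of \cite{DH20,DHP22}.
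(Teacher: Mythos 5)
Your Step 2 contains the precise gap that this theorem exists to repair. You propose to get the contraction from ``relative Kodaira-type $\pi$-vanishing for powers of $-S$'' plus a Fujiki--Grauert criterion, but the criterion that is actually available, \cite[Theorem 2]{Fuj74} (Theorem \ref{t-fuj74} in the paper), applies to an effective \emph{Cartier} divisor $A$ and requires two things that vanishing on the reduced surface $S$ does not give: (i) a morphism of complex spaces $\pi_A:A\to T_\ell$ extending $\pi$ to the non-reduced thickening $A=\ell S$ (not just a map of topological spaces), and (ii) the vanishing $R^1\pi_{A,*}\mcO_A(-kA)=0$ on that thickening, not $R^1\pi_*\mcO_S(-k\ell S|_S)=0$ on $S$. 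Since $S$ is only $\Q$-Cartier, the graded pieces $\mcO_X(-kS)/\mcO_X(-(k+1)S)$ are \emph{not} line bundles $\mcO_S(-kS|_S)$; the paper identifies them (Proposition \ref{p-hw}, an analytic version of Hacon--Witaszek \cite{HW19}) as reflexive rank-one sheaves $\mcO_S(G_k)$ with $G_k\sim_\Q -kS|_S-\Delta_k$, $0\le\Delta_k\le{\rm Diff}_S(S+B)$, so that relative Kawamata--Viehweg vanishing for the gklt pair $(S,B_S-\Delta_k+\bbeta_S)$ (Lemma \ref{l-wdvan}, after a small $\Q$-factorialization of $S$ if needed) yields $R^1\pi_*\mcO_S(G_k)=0$ (Claim \ref{c-van}); this surjectivity of $\pi_*\mcO_{S_{k+1}}\to\pi_*\mcO_{S_k}$ is what produces the thickened targets $T_k$ and the morphisms $\pi_k:S_k\to T_k$, and then a Serre-vanishing bootstrap on the Cartier thickening (Claim \ref{c-van+}) puts you in the hypotheses of Fujiki's theorem. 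Skipping this and arguing with vanishing on $S$ alone is exactly the confusion of $S$ with its thickenings that the paper flags as the error in \cite[Proposition 7.4]{HP16} and in \cite{AT84}, so as written your construction of $p$ does not go through. (The tubular-neighborhood/pushout framing is also unnecessary: Fujiki's theorem is applied globally on $X$, and no K\"ahler structure on $Z$ is claimed or needed.)

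Two smaller points. In Step 1 you do not need any $(n-1)$-dimensional MMP to produce $\pi$ or $T$: the contraction $\pi:S\to T$ with $-(K_X+S+B)|_S$ and $-S|_S$ $\pi$-ample is part of the hypothesis (the MMP in lower dimension is only needed elsewhere in the paper to guarantee such a $\pi$ exists). Step 3 is essentially correct and matches the paper's route: for an honest plt pair the flip is the relative log canonical model supplied by \cite{Fuj22} and \cite{DHP22}, though one should apply those results locally over relatively compact Stein open subsets of $Z$ and glue by uniqueness of relative log canonical models, which is how the paper's Theorem \ref{t-flip} proceeds (there the extra work is to replace the current class $\bbeta$ by a divisor locally over $Z$, an issue that does not arise in your plt setting).
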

In fact we prove a more general version of this results that applies to generalized plt pairs, see Theorems \ref{t-ext} and \ref{t-flip}.
Note that the existence of flips for plt pairs was shown in \cite{DHP22} and \cite{Fuj22}.
In order to apply the above result, we must guarantee the existence of the morphism $\pi :S\to T$. If there exists a K\"ahler form $\omega$ such that $\alpha =[K_X+S+B+\omega]$ is nef and $R=\alpha ^\perp\cap \overline{\rm NA}(X)$ is an extremal ray, then we must show that $R$ can be contracted in $S$.
To this end, we consider the pair $K_S+B_S+\omega _S=(K_S+S+B+\omega)|_S$ induced by adjunction, and we aim to prove that there is a morphism $\pi:S\to T$ such that $K_S+B_S+\omega _S\equiv \pi ^* \omega _T$ where $\omega _T$ is a K\"ahler form on $T$. 
In dimension $\dim S \leq 3$, $\pi$ exists by \cite{DHY23} and therefore Theorem \ref{t-pl} holds in dimension $\dim X\leq 4$. (We remark that the results of \cite{DHY23} only rely on the 3-fold MMP established in this paper.)
We also note that if such a morphism $\pi$ exists, then $-(K_S+B_S)\equiv _T \omega_S$ is ample over $T$ and hence this morphism is projective and so, a posteriori, the existence of $\pi$ is implied by the usual base point free theorem.

We now turn our attention to the full minimal model program for dlt K\"ahler 3-folds. We first show the existence of flipping and divisorial contractions (see Theorems \ref{t-flipcont} and \ref{t-divcont}).
\begin{theorem}\label{thm:contractions}
    Let $(X,B)$ be a $\mbQ$-factorial compact K\"ahler 3-fold dlt pair such that $K_X+B$ is pseudo-effective. Let $\omega$ be a K\"ahler class such that $\alpha=K_X+B+\omega$ is a nef and big class and $\alpha ^\perp \cap \overline {\rm NA}(X)=R$ is an extremal ray. Then there exists a projective bimeromorphic morphism $f:X\to Z$ with connected fibers such that 
    $\alpha = f^*\alpha _Z$, where $\alpha _Z$ is a K\"ahler class on $Z$. Moreover, if $f$ is a divisorial contraction, then $(Z, f_*B)$ has $\mbQ$-factorial klt singularities. 
\end{theorem}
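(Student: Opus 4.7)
The plan is to reduce the contraction of $R$ to the plt situation handled by Theorem \ref{t-pl}. Since $\alpha$ is nef and big on the compact K\"ahler $3$-fold $X$, the K\"ahler analogue of Kodaira's lemma yields a decomposition
\[
\alpha \equiv \omega_0 + \sum_{i} a_i [E_i],
\]
with $\omega_0$ a K\"ahler class, $a_i>0$, and $E_i$ prime divisors. Any curve $C$ spanning $R$ satisfies $\alpha \cdot C = 0$ and $\omega_0 \cdot C > 0$, so $\sum a_i E_i \cdot C < 0$; hence some component $S$ of $\sum E_i$ has $S \cdot C < 0$. This $S$ is covered by curves in $R$ and lies in the $R$-exceptional locus.

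For small $\ve > 0$, the pair $(X, B + \ve\sum a_i E_i)$ is still klt. Running a relative pl-MMP (with scaling) on a log resolution of $(X, B + S)$ by repeated application of Theorem \ref{t-pl} produces a $\Q$-factorial K\"ahler bimeromorphic morphism $g : Y \to X$ extracting a single prime divisor $S_Y$ mapping birationally to $S$, with $(Y, S_Y + B_Y)$ plt and both $-(K_Y + S_Y + B_Y)|_{S_Y}$ and $-S_Y|_{S_Y}$ $g$-ample. (If $(X, S + B')$ is already plt after a small perturbation of the boundary, one may simply take $Y = X$.) Lift $R$ to an extremal ray $R_Y$ of $\NA(Y)$ trivial against $g^*\alpha$; by construction $R_Y$ is spanned by curves inside $S_Y$.

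By adjunction, $(K_Y + S_Y + B_Y)|_{S_Y} = K_{S_Y} + B_{S_Y}$ is trivial on $R_Y|_{S_Y}$, and the K\"ahler surface MMP available through \cite{DHY23} (whose proof relies only on the lower-dimensional MMP) produces a projective contraction $\pi : S_Y \to T$ realizing this ray. Theorem \ref{t-pl} then yields a projective bimeromorphic morphism $p : Y \to Z$ with $p|_{S_Y} = \pi$, which is an isomorphism outside $S_Y$; this $p$ factors through $g$, giving the desired $f : X \to Z$. The class $g^*\alpha$ is $p$-trivial and descends to a nef class $\alpha_Z$ on $Z$, and since $\alpha$ is big and $p$ contracts exactly the $\alpha$-trivial curves, a Demailly--P\u{a}un type criterion upgrades $\alpha_Z$ to a K\"ahler class. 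When $f$ is divisorial, $\rho(X/Z) = 1$ and the standard negativity-of-contraction argument transfers the klt and $\Q$-factorial properties from $(X, B)$ to $(Z, f_*B)$.

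The main obstacle is producing the plt extraction of the first step while keeping $Y$ K\"ahler and ensuring that the lift of $R$ is a single extremal ray spanned by curves in $S_Y$. This requires a careful MMP-with-scaling argument---each step of which is itself a pl-flip or divisorial contraction furnished by Theorem \ref{t-pl}---together with finiteness of minimal models in dimension $\leq 3$ and the compactness of the relative K\"ahler cone, following the strategy of \cite{BCHM10} adapted to the K\"ahler setting via \cite{DH20} and \cite{DHP22}.
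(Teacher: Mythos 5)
Your first step already contains a genuine gap: there is no ``K\"ahler analogue of Kodaira's lemma'' giving $\alpha\equiv\omega_0+\sum a_i[E_i]$ with $\omega_0$ K\"ahler and $E_i$ effective divisors \emph{on $X$ itself}. Bigness in the K\"ahler category only provides a K\"ahler current, and the divisorial Zariski decomposition gives a divisorial negative part plus a part that is merely \emph{modified nef}, not K\"ahler; a decomposition with a genuinely K\"ahler $\omega_0$ is only available after passing to a resolution, where the divisor $G'$ is exceptional (this is exactly how the paper proceeds, via \cite[Theorem 3.17(ii)]{Bou04} on a log resolution). The failure is most acute precisely in the flipping case: when ${\rm Null}(\alpha)$ is a curve there need be no effective divisor on $X$ at all that is negative on $R$, so there is no candidate $S$, and the subsequent claim that ``$S$ is covered by curves in $R$'' is false for flipping rays (only finitely many $\alpha$-trivial curves exist). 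Consequently your plan of extracting a single plt divisor $S_Y$ and applying Theorem \ref{t-pl} once cannot produce the flipping contraction; note also that if $Y\neq X$ the contraction $p:Y\to Z$ given by Theorem \ref{t-pl} contracts only $S_Y$ and there is no reason it factors through $g:Y\to X$, since the $g$-exceptional divisors are untouched by $p$.

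The second, larger issue is that the step you defer to ``a careful MMP-with-scaling argument'' is in fact the entire content of the proof. What is actually needed is: (i) on a log resolution (flipping case) or a dlt model of $(X,B+(1-b)S)$ (divisorial case), run an $\alpha'$-trivial $(K_{X'}+\Delta')$-MMP in which each step is a pl-contraction or pl-flip supplied by Theorems \ref{t-ext} and \ref{t-flip} --- the point being that the $\alpha'$-trivial negative extremal rays live on components of $\lfloor\Delta'\rfloor$ which are negative on them, and these components are K\"ahler surfaces so the surface contraction $\pi$ exists; (ii) after termination, prove that the induced map from $X$ is small (flipping case) or already a morphism contracting exactly $S$ (divisorial case), which requires the negativity lemma, the statement $\Theta^m=0$, and in the divisorial case the nef reduction $S^\nu\to T$ with the dichotomy $n(\alpha)\in\{0,1\}$ together with the Moishezon property of $S$ and a covering family of $\alpha$-trivial curves; (iii) in the flipping case construct $Z$ not by contracting a divisor of $X$ but by contracting a divisor on the graph of $X\dasharrow X^+$ to a point via the Grauert--Fujiki-type Lemma \ref{l-cont} and then invoking rigidity to get $f:X\to Z$ and $f^+:X^+\to Z$; and (iv) verify $\rho(X/Z)=1$, that $f$ contracts exactly the curves of $R$, and that $\alpha_Z$ is K\"ahler via \cite[Theorem 2.30]{DHP22}. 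None of these steps is routine, and your sketch neither identifies the correct model on which to run the MMP nor explains how the output is compared back to $X$, so as written the argument does not establish the theorem.
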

% \begin{theorem}\label{thm:contractions}
%    Let $(X, B)$ be a $\mbQ$-factorial compact K\"ahler $3$-fold dlt pair such that $K_X+B$ is pseudo-effective. If $R$ is a $(K_X+B)$-negative extremal ray, then there is a projective bimeromorphic morphsim $f:X\to Z$ with connected fibers such that a curve $C\subset X$ is contracted by $f$ if and only if $[C]\in R$.  Moreover, if $f$ is a divisorial contraction, then $(Z, f_*B)$ has $\mbQ$-factorial klt singularities, and if $f$ is a flipping contraction, then the corresponding flip $f^+:X^+\to Z$ also exists.   
% \end{theorem}

Using only a special case of the above theorem (namely, assuming $X$ is \textit{strongly} $\mbQ$-factorial), we give a quick proof of the cone theorem when $K_X+B$ is pseudo-effective. 
%Note that contractions to a point are typically easier since in many cases one can apply a result of Grauert \cite{Gra62}. For example, in Corollary \ref{c-1} we show the existence of extremal contractions of a divisor to a point in arbitrary dimension. We are u 
\begin{theorem}  \label{t-cone}  
Let $(X,B)$ be a compact K\"ahler $3$-fold dlt pair such that $K_X+B$ is pseudo-effective. Then there are at most countably many rational curves $\{\Gamma _i\}_{i\in I}$ in $X$ such that $-(K_X+B)\cdot \Gamma _i\leq 6$ for all $i\in I$ and 
 \[\overline{\rm NA}(X)=\overline{\rm NA}(X)_{(K_X+B) \geq 0}+\sum _{i\in I}\mathbb R ^+[\Gamma _i].\]
 % Moreover, if $K_X+B$ is big, then $I$ is finite.
 \end{theorem}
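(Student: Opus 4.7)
The plan is to adapt the classical cone-theorem strategy: locate $(K_X+B)$-negative extremal rays as supporting faces of K\"ahler perturbations of $K_X+B$, contract them via Theorem \ref{thm:contractions}, and extract rational curves of bounded length through Mori's bend-and-break in the fibers of these (now projective) contractions.

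First I fix a countable dense set $\{\omega_k\}$ of K\"ahler classes on $X$. For each $k$, set $\lambda_k=\inf\{t\geq 0 : K_X+B+t\omega_k \text{ is nef}\}$. Since $K_X+B$ is pseudo-effective, the class $\alpha_k=K_X+B+\lambda_k\omega_k$ is nef, and provided $\lambda_k>0$ (i.e.\ $K_X+B$ is not already nef) $\alpha_k$ is also big. The supporting face $F_k=\alpha_k^{\perp}\cap\NA(X)$ is $(K_X+B)$-negative off the origin, since on $F_k$ one has $K_X+B\equiv -\lambda_k\omega_k$ and $\omega_k$ is strictly positive on $\NA(X)\setminus\{0\}$. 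A perturbation of $\omega_k$ inside the K\"ahler cone then singles out an individual extremal ray $R\subset F_k$ supported by a nef-and-big class $\alpha$.

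Next I apply Theorem \ref{thm:contractions} to $\alpha$ to obtain a projective bimeromorphic contraction $f_R:X\to Z_R$ whose contracted curves generate $R$. Because $f_R$ is projective, every positive-dimensional fiber is a projective variety on which $-(K_X+B)$ is $f_R$-ample, so Mori's bend-and-break (via reduction mod $p$, exactly as in Koll\'ar--Mori) produces a rational curve $\Gamma\subset X$ generating $R$ with $0<-(K_X+B)\cdot\Gamma\leq 2\dim X=6$. Varying $k$ yields a countable collection $\{\Gamma_i\}_{i\in I}$ of rational curves with $-(K_X+B)\cdot\Gamma_i\leq 6$ spanning extremal rays $R_i$.

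For the decomposition, suppose some class $\beta\in\NA(X)$ lies outside the closed subcone $V:=\NA(X)_{(K_X+B)\geq 0}+\sum_i \mathbb{R}^+[\Gamma_i]$. A separating hyperplane for $V$ and $\beta$, chosen near the K\"ahler cone, produces a $(K_X+B)$-negative extremal ray $R^\ast$ not appearing among the $R_i$; density of $\{\omega_k\}$, together with the fact that the nef-threshold construction detects every $(K_X+B)$-negative extremal ray (each such ray is cut out by a nef supporting class which can be approximated by an $\alpha_k$), yields a contradiction. The main obstacle is the perturbation step isolating a single extremal ray inside $F_k$, coupled with the proof that the $R_i$ can accumulate only on the hyperplane $(K_X+B)=0$; the latter follows from the uniform length bound $6$ and compactness of slices of $\NA(X)$ bounded away from $\{K_X+B=0\}$.
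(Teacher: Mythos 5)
Your overall strategy (produce nef classes of the form $K_X+B+\omega$ supporting negative extremal rays, contract them via Theorem \ref{thm:contractions}, extract rational curves of degree at most $6$, then deduce the cone decomposition with accumulation only along $(K_X+B)^{\geq 0}$) is the same as the paper's, and the bend-and-break/length-bound step you invoke is exactly what the paper delegates to the cited result [DO23, Theorem 4.2], with countability via a Douady space argument. But there is a genuine gap at the step you yourself flag as ``the main obstacle'': isolating a single extremal ray. Theorem \ref{thm:contractions} requires a nef and big class $\alpha$ with $\alpha^{\perp}\cap\NA(X)=R$ a \emph{ray}; your nef-threshold classes $\alpha_k$ only give a face $F_k$, and in the K\"ahler setting there is no rationality/finiteness theorem available (its proof normally uses base-point freeness, which is precisely what is missing here) to justify that a perturbation of $\omega_k$ cuts $F_k$ down to a ray while keeping the supporting class nef. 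Likewise, your closing claim that ``each $(K_X+B)$-negative extremal ray is cut out by a nef supporting class which can be approximated by an $\alpha_k$'' is unjustified: an extremal ray of a closed cone need not be exposed (i.e.\ need not admit a supporting hyperplane meeting the cone only along that ray), and even for exposed rays it is unclear that they arise as faces of nef thresholds for your countable family. As stated, the separating-hyperplane argument produces at best a supporting face, not a contractible ray, so the contradiction does not yet materialize.

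The paper resolves exactly this point by a purely convex-geometric device: take a compact convex slice $K$ of $\NA(X)$ and work only with \emph{exposed} points of $K$. An exposed point automatically comes with a class $\alpha$, nef on $\NA(X)$, with $K\cap\alpha^{\perp}$ equal to that single point, so $R=\alpha^{\perp}\cap\NA(X)$ is an honest extremal ray to which Theorem \ref{thm:contractions} applies (after rewriting $\alpha\equiv K_X+B+\omega$ with $\omega$ K\"ahler, using that $\alpha-a(K_X+B)$ is positive on $\NA(X)\setminus\{0\}$ for $0<a\ll1$). The decomposition then follows from Straszewicz's theorem (Theorem \ref{t-conv}): since $K={\rm cl}({\rm conv}({\rm exp}(K)))$, if $\NA(X)\neq\overline V$ there must be an exposed point outside $\overline V$, which is impossible because every negative exposed ray has already been accounted for by a curve $\Gamma_i$. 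In other words, the paper never needs to detect non-exposed rays or to perturb supporting classes; this is the missing ingredient your proposal would have to supply, and without it (or an equivalent substitute) the argument does not go through.
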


Finally, using the cone and contraction theorems above we prove the existence of minimal models when $K_X+B$ is pseudo-effective. 
\begin{theorem}\label{t-mmp-mfs}
    Let $(X,B)$ be a $\mbQ$-factorial compact K\"ahler $3$-fold dlt pair such that $K_X+B$ is pseudo-effective. Then after finitely many $(K_X+B)$-flips and divisorial contractions
    \[X=X_0\dasharrow X_1\dasharrow X_2\dasharrow \ldots \dasharrow X_n\]
    we obtain that $K_{X_n}+B_n$ is nef.   
\end{theorem}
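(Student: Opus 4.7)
The plan is to run a $(K_X+B)$-minimal model program with scaling of a K\"ahler class, invoking Theorem~\ref{t-cone} for the cone, Theorem~\ref{thm:contractions} for extremal contractions, and Theorem~\ref{t-pl} (combined with the standard special-termination reduction) for flips, and then to terminate by classical discrete invariants. Fix a K\"ahler class $\omega_0$ on $X_0=X$ and set $B_0=B$. Inductively assume that $(X_i,B_i,\omega_i)$ is a $\mathbb{Q}$-factorial compact K\"ahler klt $3$-fold pair equipped with a K\"ahler class $\omega_i$, and that $K_{X_i}+B_i$ is still pseudo-effective. Set
\[
\lambda_i := \inf\{\, t\ge 0 : K_{X_i}+B_i+t\,\omega_i \text{ is nef}\,\}.
\]
If $\lambda_i=0$ we stop. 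Otherwise $\alpha_i := K_{X_i}+B_i+\lambda_i\omega_i$ is nef, and being the sum of a pseudo-effective class and a K\"ahler class it lies in the interior of the pseudo-effective cone, hence is big. Theorem~\ref{t-cone} tells us that the $(K_{X_i}+B_i)$-negative extremal rays accumulate only at $(K_{X_i}+B_i)^\perp$, so the face $\alpha_i^\perp\cap\overline{\operatorname{NA}}(X_i)$ meets only finitely many of them; after perturbing $\omega_i$ by a small K\"ahler class we may assume this face is a single extremal ray $R_i$ with $(K_{X_i}+B_i)\cdot R_i<0$.

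Now apply Theorem~\ref{thm:contractions} to $R_i$: this yields a projective bimeromorphic contraction $f_i:X_i\to Z_i$ with $\alpha_i\equiv f_i^*\alpha_{Z_i}$ for a K\"ahler class $\alpha_{Z_i}$ on $Z_i$ (it is bimeromorphic, not a Mori fibration, because $K_{X_i}+B_i$ is pseudo-effective). If $f_i$ is divisorial, set $(X_{i+1},B_{i+1},\omega_{i+1}) := (Z_i,(f_i)_*B_i,(f_i)_*\omega_i)$; then $(X_{i+1},B_{i+1})$ is $\mathbb{Q}$-factorial klt K\"ahler by Theorem~\ref{thm:contractions}. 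If $f_i$ is a flipping contraction, the flip $f_i^+:X_{i+1}\to Z_i$ exists by Theorem~\ref{t-pl} combined with the standard reduction of general log flips to pl-flips via special termination in dimension three (equivalently by the references recalled in the introduction); we define $\omega_{i+1}$ as a small K\"ahler perturbation of the strict transform of $\omega_i$, which remains K\"ahler because $(f_i^+)^*\alpha_{Z_i}$ is nef on $X_{i+1}$ and an arbitrarily small K\"ahler correction preserves positivity. In either case $\lambda_{i+1}\le\lambda_i$ up to arbitrarily small error and the inductive hypotheses persist.

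Termination then rests on two classical facts. First, each divisorial step strictly decreases $\dim H^{1,1}_{\operatorname{BC}}(X_i,\mathbb{R})$, bounding the number of divisorial contractions. Second, any sequence of $(K+B)$-flips on klt $3$-folds terminates by Shokurov's difficulty function, which counts exceptional valuations of $(X_i,B_i)$ with discrepancy in $(-1,0]$: its definition and strict-decrease argument are local on the flipping diagram, so they go through verbatim in the K\"ahler setting. Together these imply that the process terminates after finitely many steps at a model $X_m$ with $K_{X_m}+B_m$ nef.

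The principal obstacle is the K\"ahler-geometric bookkeeping across flips: in the projective MMP with scaling, an ample class descends trivially under a contraction and has a nearly-ample strict transform across a flip, whereas in the K\"ahler category one must verify at every step that $\omega_{i+1}$ can be chosen K\"ahler. This is managed by the small-perturbation argument sketched above together with $\mathbb{Q}$-factoriality and the fact that $\alpha_i\equiv f_i^*\alpha_{Z_i}$ genuinely descends to a K\"ahler class on $Z_i$ (part of Theorem~\ref{thm:contractions}). Secondary delicacies---isolating a single extremal ray inside $\alpha_i^\perp$, verifying that nef-plus-K\"ahler is big, and checking that Shokurov's termination goes through for K\"ahler klt $3$-folds---are each handled by further perturbations or by direct appeal to the locality of the relevant discrepancy invariants.
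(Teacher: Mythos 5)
Your overall architecture (cone theorem $\to$ extremal contraction or flip $\to$ termination) is the same as the paper's, but the step where you carry the K\"ahler structure across a flip is a genuine gap as written. You define $\omega_{i+1}$ as ``a small K\"ahler perturbation of the strict transform of $\omega_i$'' and justify its positivity by adding ``an arbitrarily small K\"ahler correction'' on $X_{i+1}$ --- this is circular, since the whole point is to show that $X_{i+1}$ carries \emph{some} K\"ahler class, and the strict transform of a K\"ahler class under a flip is in general not K\"ahler (it has no reason to be positive on the flipped curves). Neither Theorem \ref{thm:contractions} nor Theorem \ref{t-pl} states that the flipped variety is K\"ahler, so without this you cannot re-apply Theorem \ref{t-cone} and Theorem \ref{thm:contractions} at the next step. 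The correct argument (carried out inside the paper's proof of Theorem \ref{t-flipcont}) is that $K_{X^+}+B^+$ is $f^+$-ample, so $(f^+)^*\alpha_{Z}+\delta\,(K_{X^+}+B^+)$ is K\"ahler for $0<\delta\ll 1$; this, together with the $\Q$-factoriality and kltness of $X^+$, is what keeps the induction running.

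Two further points are asserted without proof and differ from the paper's route. First, your scaling setup requires that, after a small perturbation of $\omega_i$, the face $\alpha_i^\perp\cap\overline{\operatorname{NA}}(X_i)$ is a single extremal ray; the non-accumulation of $(K+B)$-negative rays away from $(K+B)^\perp$ is not part of the statement of Theorem \ref{t-cone} and needs a bounded-degree/Douady-space argument. The paper avoids scaling altogether: in the proof of Theorem \ref{t-cone} each negative extremal ray comes with a nef and big supporting class of the form $K_X+B+\omega$ (via exposed points of a compact slice, Theorem \ref{t-conv}), which is exactly the input Theorems \ref{t-flipcont} and \ref{t-divcont} require. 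Second, your termination mechanism --- a strictly decreasing count of exceptional valuations with discrepancy in $(-1,0]$ --- is not correct for klt (as opposed to terminal) pairs: that count need not drop at a flip whose flipping locus avoids the relevant centers, and the actual proof is Kawamata's more involved argument. This is harmless only because the paper provides Theorem \ref{thm:termination} (via \cite[Theorem 3.3]{DO23}), which you should cite instead of the sketched invariant; similarly, flip existence is already available from Theorem \ref{t-flipcont} (or from \cite{Fuj22}, \cite{DHP22}, since flipping contractions are projective), so the detour through pl-flips and special termination is unnecessary.
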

The existence of Mori-fiber spaces also follows, but we do not pursue it here as it also relies on the results of \cite{Bru06}, \cite{HP15}, and \cite{DH20}.\\

\begin{remark}
We also note that Theorems 2.28, 2.30 and 2.31 in \cite{DO23} rely on the contraction results of \cite{HP16} and \cite{CHP16}, however these theorems are also implied by Theorem \ref{thm:contractions}, and so the main results of \cite{DO23} are unaffected.

 %   In \cite{DO23} the authors O. Das and W. Ou proved contractions theorems (\cite[Theorems 10.14, 10.16 and 10.17]{DO23}) for dlt pairs $(X, B)$ such that $K_X+B$ is pseudo-effective, using the contraction results of \cite{HP16} and \cite{CHP16}. So these results of \cite{DO23} should be replaced by our Theorem \ref{thm:contractions} to make the main theorems of \cite{DO23} error free. 
\end{remark}

This article is organized in the following manner. In Section \ref{sec:prelim} we recall some definitions and provide appropriate references, in Section \ref{sec:point-contractions} we prove some contraction theorems when the exceptional locus maps to finitely many points. Section \ref{sec:main-contractions} is the core of this article. In this section we prove Theorems \ref{t-pl} and \ref{thm:contractions}. Finally, in Section \ref{sec:cone-and-mm} we prove the cone Theorem \ref{t-cone} and the existence of minimal models, Theorem \ref{t-mmp-mfs}.\\

\noindent
{\bf Acknowledgement } We would like to thank Paolo Cascini, Mihai P\u{a}un, J\'anos Koll\'ar, and Matei Toma for useful comments, references and suggestions. % for pointing out the errors in \cite[Proposition 7.1]{HP16} and \cite{AT84}. 

\section{Preliminaries}\label{sec:prelim}
We will follow the usual conventions of the minimal model program. In particular, we refer the reader to \cite[Chapter 2]{KM98} for the definition of pairs and their singularities (klt, lc, plt etc.), \cite[Subsection 2.1]{DHY23} for the definitions of generalized pairs and related singularities (glc, gklt, gdlt, etc.), \cite[Chapters 2, 3, 11]{Fuj22} for many key concepts of the minimal model program for analytic varieties, to \cite[Definition 2.8]{DHP22} for the definitions of nef, minimal and log terminal models, \cite{HP16} for a discussion of Bott-Chern cohomology $H^{1,1}_{\rm BC}(X)$, the  K\"ahler and nef cones $\mathcal K\subset \bar {\mathcal K}$, the Mori cone and the cone of positive closed currents $\overline{\rm NE}(X)\subset \overline {\rm NA}(X)\subset N_1(X)$.  Analytic pl-flipping contractions and pl-flips are defined in \cite[Chapter 15]{Fuj22}. 

\begin{definition}\label{def:fujiki-class-c}
    A compact analytic variety $X$ is said to be in Fujiki's class $\mcC$ if $X$ is bimeromorphic to a compact K\"ahler manifold.
\end{definition}

\begin{definition}\label{def:modified-kahler}
    Let $X$ be a normal compact analytic variety. A closed positive $(1,1)$ current $T$ on $X$ with local potentials is called a K\"ahler current if $T\>\omega$ for some smooth Hermitian form $\omega$ on $X$. A $(1,1)$ class $\alpha\in H^{1,1}_{\BC}(X)$ is called big if it contains a K\"ahler current. A class $\beta\in H^{1,1}_{\BC}(X)$ is called a \emph{modified K\"ahler class} if there is a K\"ahler current $T$ such that the Lelong numbers satisfy $\nu(T, P)=0$ for all prime Weil divisors $P\subset X$.  
\end{definition}

\begin{remark}\label{rmk:big-class-in-fujiki}
    Note that if $X$ is a compact complex manifold in Fujiki's class $\mcC$, then clearly $X$ carries a big $(1,1)$ class $\alpha\in H^{1,1}_{\BC}(X)$. However, if $X$ is singular, then it is not clear whether $X$ carries any big $(1,1)$ class or not, the issue here is that the pushforward of a smooth K\"ahler form from a resolution of $X$ is a positive current on $X$ which may not have local potentials.      
\end{remark}

\begin{definition}\label{def:mod-nef}
Let $X$ be a normal compact analytic variety and $\alpha\in H^{1,1}_{\BC}(X)$ a pseudo-effective class. Then the \emph{negative part} $N(\alpha)$ of the Boucksom-Zariski decomposition of $\alpha$ is an effective $\mbR$-divisor (see \cite[Defintion A.6]{DHY23} and \cite[Definition 3.7]{Bou04}). We note that in \cite[Definition  A.6]{DHY23} it is assumed that $X$ is K\"ahler, however, it was never used.
If $X$ is $\mbQ$-factorial, $N(\alpha)$ is $\mbR$-Cartier, and so $\alpha-[N(\alpha)]$ is a $(1,1)$ class in $H^{1,1}_{\BC}(X)$. In this case we define $P(\alpha):=\alpha-[N(\alpha)]$ and call it the \emph{positive part} of $\alpha$. Lemma \ref{lem:positive-part} below shows that $P(\alpha)$ is a modified nef class (see \cite[Definition 2.2(ii)]{Bou04}) when $X$ is a K\"ahler variety. We call $\alpha=[N(\alpha)]+P(\alpha)$, the \emph{Boucksom-Zariski decomposition} of $\alpha$.  
\end{definition}

\begin{lemma}\label{lem:existence-of-modified-kahler}
    Let $X$ be a normal $\mbQ$-factorial compact analytic variety in Fujiki's class $\mcC$, and    $\alpha\in H^{1,1}_{\BC}(X)$ a big $(1,1)$ class. Then $X$ contains a modified K\"ahler class. 
\end{lemma}

\begin{proof}
    Let $f:Y\to X$ be a resolution of singularities of $X$ such that $f$ is an isomorphism over $X_{\sm}$. By Demailly's regularization theorem, there is a K\"ahler current $T$ in $f^*\alpha\in H^{1,1}_{\BC}(Y)$ with analytic singularities such that $T\geq \omega_Y$ for some smooth Hermitian form $\omega_Y$ on $Y$. Let $T=D+R$ be the Siu decomposition of $T$, where $D$ is an effective $\mbR$-divisor and $R$ is the residue current. Then $R\geq \omega_Y$. Let $\omega$ be a smooth Hermitian form on $X$, then there is a $C>0$ such that $\omega_Y\geq Cf^*\omega$. Note that by \cite[Proposition 4.6.3(i)]{BG13}, $f_*T$ has local potentials on $X$ and $[f_*T]=\alpha$. Now  since $X$ is $\mbQ$-factorial, $f_*D$ is $\mbR$-Cartier, and hence $f_*R$ has local potentials on $X$. However, $f_*R\geq C\omega$, and the generic Lelong numbers $\nu(f_*R, P)=0$ for all prime Weil divisors $P$ on $X$. Therefore $[f_*R]\in H^{1,1}_{\BC}(X)$ is a modified K\"ahler class.
\end{proof}

\begin{lemma}\label{lem:positive-part}
Let $X$ be a normal $\mbQ$-factorial compact analytic variety in Fujiki's class $\mcC$, such that $H^{1,1}_{\BC}(X)$ contains a big $(1,1)$ class. If $\alpha\in H^{1,1}_{\BC}(X)$ is a pseudo-effective class, then the positive part $P(\alpha)$ of the Boucksom-Zariski decomposition of $\alpha$ defined above is a modified nef class. 
\end{lemma}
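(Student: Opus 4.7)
The plan is to verify the defining property of modified nef classes via Boucksom's minimal multiplicity functional. Recall from \cite[Section 3]{Bou04} that on a compact K\"ahler manifold, a pseudo-effective class $\beta$ is modified nef if and only if the minimal multiplicity $\nu(\beta, D)$ vanishes along every prime divisor $D$, and that by construction $N(\beta) = \sum_{D}\nu(\beta, D)\,[D]$ is a finite effective sum.

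The main computation is that $\nu(P(\alpha), D) = 0$ for every prime divisor $D$. This follows from the additivity of minimal multiplicities: for any pseudo-effective class $\beta$, prime divisor $D'$, scalar $0 \le c \le \nu(\beta, D')$, and prime divisor $D$, one has
\[
\nu(\beta - c[D'], D) \;=\; \nu(\beta, D) \;-\; c\,\delta_{D,D'}.
\]
Iterating this over the finitely many components of $N(\alpha)$ yields $\nu(P(\alpha), D) = 0$ for each $D \subset \supp N(\alpha)$; for $D \not\subset \supp N(\alpha)$ we already have $\nu(\alpha, D) = 0$, which is preserved by the subtraction.

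The main obstacle is that the above references concern smooth compact K\"ahler manifolds, whereas $X$ is a singular normal $\mbQ$-factorial variety in Fujiki's class $\mcC$. To bridge this, I would pick a bimeromorphic morphism $\mu : \wt X \to X$ from a compact K\"ahler manifold $\wt X$. By the $\mbQ$-factoriality of $X$ (which ensures $\mu^*[N(\alpha)]$ is well defined in $H^{1,1}_{\BC}(\wt X)$) and the compatibility of minimal multiplicities with modifications (so that $\nu(\mu^*\alpha, \mu^{-1}_*D) = \nu(\alpha, D)$ for every prime divisor $D \subset X$), one obtains a decomposition
\[
\mu^*P(\alpha) \;=\; P(\mu^*\alpha) \;+\; [E]
\]
for some $\mu$-exceptional $\mbR$-Cartier divisor $E$ on $\wt X$. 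Pushing forward, $P(\alpha) = \mu_*\mu^*P(\alpha) = \mu_*P(\mu^*\alpha)$, since $\mu_*[E] = 0$ for $\mu$-exceptional $E$. Since $P(\mu^*\alpha)$ is modified nef on $\wt X$ by Boucksom's theory, and pushforwards under bimeromorphic morphisms send modified K\"ahler classes to modified K\"ahler classes directly from the definition (hence modified nef to modified nef by continuity), we conclude that $P(\alpha)$ is modified nef on $X$.
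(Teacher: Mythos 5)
Your overall strategy is the same as the paper's: pass to a resolution $\mu:\wt X\to X$, use Boucksom's result that the positive part upstairs is modified nef, and descend by pushforward. However, two of your steps are asserted rather than proved, and one of them is where the actual content lies. First, you treat the minimal multiplicities $\nu(\alpha,D)$ as defined on the singular $X$ and invoke a "compatibility with modifications"; in the framework of Definition \ref{def:mod-nef} (via \cite[Definition A.6]{DHY23}) the decomposition on $X$ is \emph{defined} by $N(\alpha)=\mu_*N(\mu^*\alpha)$, so the identity $P(\alpha)=\mu_*P(\mu^*\alpha)$ that you derive in a roundabout way is immediate, and your additivity computation is unnecessary: the smooth statement $\nu(P(\mu^*\alpha),D)=0$ is exactly \cite[Proposition 3.8(i)]{Bou04}, which can simply be cited (note also that your cross-term formula $\nu(\beta-c[D'],D)=\nu(\beta,D)$ for $D\neq D'$ is itself stated without proof; it can be checked with a Siu-decomposition argument, but it amounts to reproving the cited result).

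The genuine soft spot is the final step, namely the claim that pushforward under a bimeromorphic morphism sends modified K\"ahler classes to modified K\"ahler classes "directly from the definition." The Lelong-number condition does descend, since $\mu$ is an isomorphism at the generic point of every prime divisor of $X$, but you also need the pushed-forward class to contain a closed positive current \emph{with local potentials} on the singular $X$ (Definition \ref{def:modified-kahler}), and this is not a formal consequence of the definition. The paper closes exactly this point via \cite[Proposition 2.3]{Bou04} (compare Lemma \ref{lem:modified-kahlerle}(1) and \cite[Lemma 3.4]{HP16}): it writes a modified K\"ahler class $\beta$ on $X$ as $f_*\omega_Y$ for a K\"ahler class $\omega_Y$ on the resolution, observes that $P(\alpha)+\beta=f_*(P(f^*\alpha)+\omega_Y)$ where $P(f^*\alpha)+\omega_Y$ is modified K\"ahler on $Y$, and applies the same characterization again to conclude that $P(\alpha)+\beta$ is modified K\"ahler; since $\beta$ can be scaled, this gives modified nefness. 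Your "modified nef by continuity" conclusion is fine once modified nef on $X$ is understood as the closure of the modified K\"ahler cone, which is how the paper argues as well; so your proof is reparable, but as written the pushforward step requires the argument of \cite[Proposition 2.3]{Bou04} rather than just the definition.
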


\begin{proof}
    Let $f:Y\to X$ be a resolution of singularities of $X$ such that $Y$ is a K\"ahler manifold and $f^*\alpha=N(f^*\alpha)+P(f^*\alpha)$ the Boucksom-Zariski decomposition of $f^*\alpha$ as in \cite[Definition 3.7]{Bou04}. Then from \cite[Definition A.6]{DHY23} it follows that $P(\alpha)=f_*(P(f^*\alpha))$. Note that $P(f^*\alpha)$ is modified nef by \cite[Proposition 3.8(i)]{Bou04}. Let $\beta$ be a modified K\"ahler class on $X$, its existence is guaranteed by Lemma \ref{lem:existence-of-modified-kahler}. It suffices to show that $P(\alpha)+\beta$ is modified K\"ahler, as the modified nef cone is the closure of the modified K\"ahler cone (see \cite[\S 2.7]{Bou04}). Let $T$ be a K\"ahler current in $\beta$ with generic Lelong numbers $\nu(T, P)=0$ for all prime Weil divisors $P$ on $X$. Let $f^*T=D+R$ be the Siu decomposition, where $D$ is the divisorial part and $R$ is the residue current. Then $D$ is a $\mbR$-Cartier divisor such that $\Supp(D)\subset \Ex(f)$ and $R$ is a K\"ahler current. In particular, $[R]\in H^{1,1}_{\BC}(Y)$ is a modified K\"ahler class. Then by \cite[Lemma 2.35]{DH20} applied to the class $[R]$ and replacing $Y$ by a higher resolution we may assume that there is a K\"ahler class $\omega_Y$ and an effective $f$-exceptional $\mbR$-divisor $E$ on $Y$ such that $f^*\beta=\omega_Y+E$. Since the modified nef cone is the closure of the modified K\"ahler cone, it follows that $P(f^*\alpha)+\omega_Y$ is modified K\"ahler on $Y$. Let $\Theta$ be a K\"ahler current in $P(f^*\alpha)+\omega_Y$ such that the generic Lelong numbers $\nu(\Theta, Q)=0$ for all prime Weil divisors $Q$ on $Y$. Note that $P(f^*\alpha)=f^*P(\alpha)+F$ for some $f$-exceptional $\mbR$-divisor $F$ (but not necessarily effective). Then the current $\Theta+E+F$ is in the class $f^*(P(\alpha)+\beta)+[F]$. Let $U:=X\setminus f(\Ex(f))$; then $(f_*\Theta)|_U$ is a closed positive $(1,1)$ current with local potentials representing the class $(P(\alpha)+\beta)|_U$. Since $\codim_Xf(\Ex(f))\geq 2$, by the same argument as in the proof of \cite[Proposition 4.6.3(i)]{BG13} it follows that $f_*\Theta|_U$ extends to a unique closed positive $(1,1)$ current $f_*\Theta$ with local potentials representing the class $P(\alpha)+\beta$. From the definition of $\Theta$ it follows that $\nu(f_*\Theta, P)=0$ for all prime Weil divisors $P$ on $X$. Thus $P(\alpha)+\beta$ is a K\"ahler class.    
\end{proof}

\section{Singularities of Currents on Complex Spaces}

\begin{definition}
Let $X$ be a normal analytic variety and $T$ a closed positive $(1, 1)$ current on $X$ with local potentials. Then for each $c>0$ we define
\[ 
E_c:=\{x\in X\;:\; \nu(T, x)>c\},
\]
where $\nu(T, x)$ denote the Lelong number of $T$ at $x\in X$.\\ 
This is an analytic subset of $X$ by \cite{Siu74}, also see \cite[Lemma 4.16]{HP24}. We define 
\[
E_+(T)=\cup_{c\in \mbR^+}E_c.
\]
If additionally $X$ is compact and $\alpha\in H^{1,1}_{\BC}(X)$ is a big class, then we define the \emph{non-K\"ahler} locus of $\alpha$ as follows:
\[
E_{nK}(\alpha ):=\cap_{T\in \alpha} E_+(T),
\]
where $T$ runs over all K\"ahler currents contained in the class $\alpha$.
\end{definition}

We will also need to consider the currents with \emph{admissible singularities} and \emph{weakly analytic singularities} as in \cite{HP24}.
\begin{definition}\label{def:admissible-singularities}\cite[Definition 4.10]{HP24}
    Let $X$ be a normal analytic variety and $\vphi:X\to [-\infty, \infty)$ a function on $X$. We say that $\vphi$ has \textit{admissible singularities} if 
    \[
    \vphi:=\max\{\vphi_1, \vphi_2,\ldots, \vphi_k\},
    \]
where each $\vphi_i:X\to [-\infty, \infty)$ is a function with weakly analytic singularities in the sense \cite{Dem92}, i.e. locally
\[
\vphi_i=\lambda_i\log\left(\sum_{j=1}^{n_j}|f_j|^2 \right)+g_i,
\]
where $\lambda_i\>0$, $f_j$ are holomorphic functions and $g_i$ are bounded functions.

In this case we say that the positive current $T=\alpha+i\ddbar\vphi$, where $\alpha$ is a real closed smooth $(1,1)$ form, has \emph{admissible singularities}. 
\end{definition}

\begin{definition}\label{def:weakly-analytic-singular-current}\cite[Definition 4.11]{HP24}
    Let $X$ be a normal analytic variety, and $T=\alpha+i\ddbar\vphi$ a closed positive $(1,1)$ current on $X$, where $\alpha$ is a smooth closed positive $(1,1)$ form on $X$ with local potentials. We say that $T$ has \textit{weakly analytic singularities} if the following conditions are satisfied:
    \begin{enumerate}
        \item there is a proper bimeromrophic morphism $\pi:\widehat X\to X$ from a normal analytic variety $\widehat X$, and
        \item a closed positive $(1,1)$ current $\widehat T$ on $\widehat X$ such that 
        \[
        \widehat T=\pi^*\alpha+i\ddbar\psi\geq 0,
        \]
        where $\psi$ has admissible singularities and $\pi_*\widehat T=T$.
    \end{enumerate}
From \cite[Remark 4.12]{HP24} it follows that $\widehat T=\pi^*T$. Also, note that if $T$ has weakly analytic singularities, then $E_+(T)$ is a closed analytic subset of $X$ by \cite[Lemma 4.16]{HP24}.
%If $\nu :X'\to \widehat X$ is a resolution of $\widehat X$ and of the ideal $\mathcal I _{\widehat T}\subset \OO _{\widehat X}$ of singularities of $\widehat T$, then we say that the composition $X'\to X$ is a resolution of the singularities of $T$.
\end{definition}

Next we define the \textit{restricted} non-K\"ahler locus of a current as in \cite{HP24}.
\begin{definition}\label{def:analytic-non-kahler-lcous}\cite[Definition 4.18]{HP24}
    Let $X$ be a normal compact K\"ahler analytic variety and $\alpha\in H^{1,1}_{\BC}(X)$ a nef and big class. The restricted non-K\"ahler locus of $\alpha$ is the following set
    \[
    E^\text{as}_{nK}(\alpha):=\bigcap_{T\in\alpha} E_+(T),
    \]
    where $T$ is a K\"ahler current with weakly analytic singularities.

    \end{definition}

Next we recall two very important results from \cite{HP24}.

\begin{lemma}\label{lem:as-locus-is-analytic}\cite[Corollary 4.20]{HP24}
    Let $X$ be a normal compact K\"ahler variety and $\alpha\in H^{1,1}_{\BC}(X)$ a big class. There is a K\"ahler current $T$ on $X$ with weakly analytic singularities such that $E^\text{as}_{nK}(\alpha)=E_+(T)$; in particular, $E^\text{as}_{nK}(\alpha)$ is a closed analytic subset of $X$.   
\end{lemma}

\begin{theorem}\label{thm:non-kahler-equal-null}\cite[Theorem 4.21]{HP24}
    Let $X$ be a normal compact K\"ahler variety and $\alpha\in H^{1,1}_{\BC}(X)$ a nef and big class. Then $E^\text{as}_{nK}(\alpha)=\Null(\alpha)$, in particular, $\Null(\alpha)$ is an analytic set. 
\end{theorem}

We will also need the following lemmas. 

\begin{lemma}\label{lem:resolution-of-current-singularities}%\footnote{Om: I didn't use Mihai's argument here, found shorter argument using his ideas.}
  Let $X$ be normal compact K\"ahler variety, $\alpha$ a real smooth closed $(1,1)$ form, and $T=\alpha+i\ddbar\vphi\geq \omega$ a closed positive $(1,1)$ current with weakly analytic singularities, where $\omega$ is a fixed K\"ahler form on $X$. Let $Z:=E_+(T)$. %, and $f:Y\to X$ a resolution of the singularities of $X$ and $T$. 
 Then there exists a resolution $f:Y\to X$ such that 
  \[
  f^*T=F+\Theta\geq f^*\omega
  \]
 where $F$ is an effective $\mbR$-divisor with $f(\Supp F)=Z$, and $\Theta\>f^*\omega$ is a closed positive $(1,1)$ current such that the classes $[\Theta]\mbox{ and } [\Theta-f^*\omega]\in H^{1,1}_{\BC}(Y)$ are both nef.
 %\footnote{Doesn't this actually show that $\Theta -f^*\omega=\beta +i\ddbar\rho$ is nef? Om: Yes, I believe so, because $E_+(R)=\emptyset$ as $\rho$ is a bounded function, and then by the same theorem of Mihai as before the class $[R]=[\Theta-f^*\omega]$ is nef. But $\Theta-f^*\omega$ not a smooth form though as $\rho$ is only a bounded function.}
\end{lemma}

\begin{proof} 
Since $T$ has weakly analytic singularities, we may assume that there is a resolution $f:Y\to X$ such that the Siu decomposition is $f^*T=F+R_0$, where $F$ is an effective $\mbR$-divisor and $R_0$ is the residue current whose  local potentials are bounded functions.
Since $f^*T\geq f^*\omega$, then $R_0\geq f^*\omega$ and so the local potentials of $R:=R_0-f^*\omega$ are also bounded functions.
%Now let $f^*(T-\omega)=F+R$ be the Siu decomposition of the positive current $f^*(T-\omega)$, where $F$ is an effective $\mbR$-divisor and $R$ is the residue current. Now since $T-\omega$ has admissible singularities and $f^{-1}Z$ is a SNC divisor, it follows from the computation of $i\ddbar(\vphi\circ f)$ that the local potentials of $R$ are bounded functions. 
In particular, the Lelong numbers satisfy $\nu(R, y)=0$ for all $y\in Y$, and we can write $R=\beta+i\ddbar\rho\geq 0$ (see \cite[\S 3]{DP04}), where $\beta$ is a real smooth closed $(1,1)$ form and $\rho:Y\to \mbR$ is a bounded quasi-psh function. Therefore we have $f^*T=F+(\beta+f^*\omega)+i\ddbar\rho=F+\Theta\>f^*\omega$, where $\Theta:=\beta+f^*\omega+i\ddbar\rho=R+f^*\omega\>f^*\omega$. Since $E_+(\Theta-f^*\omega)=E_+(R)=\emptyset$ by our construction, by \cite[Theorem 2, Page 418]{Pau98} the class $[\Theta-f^*\omega]\in H^{1,1}_{\BC}(Y)$ is nef, and so $[\Theta]=[\Theta-f^*\omega]+[f^*\omega]$ is also nef. 
   
\end{proof}

\begin{lemma}\label{lem:non-kahler-locus}
Let $f:Y\to X$ be a proper bimeromorphic morphism from a compact complex manifold $Y$ in Fujiki's class $\mcC$ to a normal compact analytic variety $X$ with rational singularities. Let $\alpha$ be a $(1,1)$ big class on $X$. Then $f^{-1}(E_{nK}(\alpha))\cup\Ex(f)\subset E_{nK}(f^*\alpha)$.
\end{lemma}

\begin{proof}
Supposing  that $y\notin E_{nK}(f^*\alpha)$, we must show that
$y\notin f^{-1}(E_{nK}(\alpha))\cup\Ex(f)$. Since $Y$ is a compact complex manifold in Fujiki's class $\mcC$, by \cite[Theorem 3.27(ii)]{Bou04} (also see \cite[Theorem 2.2]{CT15}), there is a K\"ahler current $T$ in the class $f^*\alpha$ with analytic singularities such that $E_{nK}(f^*\alpha)=E_+(T)$. Thus $y\notin E_{nK}(f^*\alpha)$ implies that $T$ is a smooth form near $y$. 
Suppose that  $y\in \Ex(f)$. We can write $T=f^*\theta+i\ddbar\vphi$, where $\theta$ is a real closed smooth $(1,1)$ form on $X$ with local potentials representing the class $\alpha$ and $\vphi$ is a quasi-psh function on $Y$. Since $T$ is a K\"ahler current, there is a smooth Hermitian form $\omega_Y$ such that $f^*\theta+i\ddbar\vphi\>\omega_Y$. Let $E$ be an irreducible component of $f^{-1}(f(y))$ containing $y$. Since $T$ is smooth near $y$, $E$ is not contained in the pluripolar locus $\{y'\in Y\;|\; \vphi(y')=-\infty\}$, in particular, $T|_{E}$ is well defined and we have 
\begin{equation}
    T|_{E}=(f^*\theta+i\ddbar\vphi)|_{E}=i\ddbar(\vphi|_{E})\>\omega_Y|_{E}.
\end{equation}
Thus $\vphi|_{E}$ is a strictly psh function. Since $E$ is compact, from the maximal principle of psh functions it follows that $\vphi|_{E}$ is a constant function, which is a contradiction to the above inequality. In particular, $y\not\in\Ex(f)$, and so
$f$ is an isomorphism near $y\in Y$. By \cite[Lemma 3.4]{HP16}, $S:=f_*T$ is a closed positive $(1,1)$ current on $X$ with local potentials such that $[S]=\alpha$; clearly $S$ is a K\"ahler current. Since $f$ is an isomorphism near $y\in Y$, it follows that $S$ is a smooth form near $x=f(y)\in X$. 
In particular, $x\not\in E_{nK}(\alpha)$, and the proof of the inclusion  $f^{-1}(E_{nK}(\alpha))\cup\Ex(f)\subset E_{nK}(f^*\alpha)$ is complete. %\footnote{CH: Don't we also need to show that ${\rm Ex}(f)\subset E_{nK}(f^*\alpha)$? This is the easy part because otherwise, pick $x\in {\rm Ex}(f)\setminus E_{nK}(f^*\alpha)$, then there is a an exceptional curve $x\in C$ and a K\"ahler current $\beta \equiv f^*\alpha$ which is smooth at $x$, but then $\int _C\beta >0$ which is impossible as $C\cdot f^* \alpha=f_*C\cdot \alpha =0$.}\\ 

% For the reverse inequality, suppose that $y\not\in f^{-1}(E_{nK}(\alpha))\cup\Ex(f)$. Then $f$ is an isomorphism near $y$ and $x=f(y)\not\in E_{nK}(\alpha)$. Thus there is a K\"ahler current $T\in \alpha$ such that $T$ is smooth near $x$, in particular, $f^*T\in f^*\alpha$ is K\"ahler current on $Y$ which is smooth near $y$. Hence $y\not\in E_{nK}(f^*\alpha)$. This completes our proof. 
\end{proof}

\begin{remark}
    With the same notations and hypothesis as in Lemma \ref{lem:non-kahler-locus} if additionally we assume that $X$ is K\"ahler and $\alpha$ is nef (in addition to being big), then using \cite[Corollary 4.20]{HP24} it follows easily that $E^\text{as}_{nK}(f^*\alpha)=f^{-1}(E^\text{as}_{nK}(\alpha))\cup \Ex(f)$. 
\end{remark}

\begin{lemma}\label{lem:null-locus}
    Let $f:Y\to X$ be a proper bimeromorphic morphism between two normal compact analytic varieties and $\alpha$ a  nef and big $(1,1)$-class on $X$. Then $\Null(f^*\alpha)=f^{-1}(\Null(\alpha))\cup \Ex(f)$. 
\end{lemma}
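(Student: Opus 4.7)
The plan is to prove the two inclusions separately, and in both directions the engine is the projection formula
\[
(f^*\alpha)^{\dim Z}\cdot Z \;=\; \alpha^{\dim Z}\cdot f_*Z
\]
for an irreducible $Z\subset Y$, where $f_*Z=0$ if $Z\subset \Ex(f)$ and $f_*Z=e\cdot f(Z)$ with $e=\deg(f|_Z)>0$ otherwise. Since $\alpha$ (and hence $f^*\alpha$) is nef, the intersections $\alpha^k\cdot V$ make sense as nonnegative real numbers, defined via restriction of a representing nef $(1,1)$ class to the normalization of $V$ and integration; the projection formula for such intersections is standard in the analytic/Bott--Chern setting and can, if desired, be reduced to the smooth Kähler case by resolving $X$, $Y$, and $Z$.

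For the inclusion $f^{-1}(\Null(\alpha))\cup\Ex(f)\subseteq \Null(f^*\alpha)$, I would handle $\Ex(f)$ first: if $y\in\Ex(f)$, pick an irreducible component $E$ of the fiber $f^{-1}(f(y))$ containing $y$, which has $\dim E>0$ but $\dim f(E)=0$, so $f_*E=0$ and hence $(f^*\alpha)^{\dim E}\cdot E=0$, placing $y$ in $\Null(f^*\alpha)$. If instead $y\in f^{-1}(\Null(\alpha))\setminus\Ex(f)$, set $x=f(y)\in\Null(\alpha)$ and pick a positive-dimensional subvariety $W\subset X$ with $x\in W$ and $\alpha^{\dim W}\cdot W=0$. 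Since $f$ is an isomorphism near $y$, the strict transform $\widetilde W:=f^{-1}_*W$ is a subvariety of $Y$ passing through $y$ with $f_*\widetilde W=W$, so the projection formula yields $(f^*\alpha)^{\dim\widetilde W}\cdot \widetilde W=\alpha^{\dim W}\cdot W=0$, giving $y\in\Null(f^*\alpha)$.

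For the reverse inclusion, take $y\in \Null(f^*\alpha)$ and assume $y\notin\Ex(f)$; I need to show $f(y)\in\Null(\alpha)$. By definition of the null locus, there exists a positive-dimensional irreducible subvariety $Z\subset Y$ containing $y$ with $(f^*\alpha)^{\dim Z}\cdot Z=0$. Because $y\in Z$ and $y\notin\Ex(f)$, the subvariety $Z$ is not contained in $\Ex(f)$, so $f|_Z:Z\to W:=f(Z)$ is generically finite of some degree $e>0$ and $\dim W=\dim Z$. The projection formula then gives
\[
0\;=\;(f^*\alpha)^{\dim Z}\cdot Z\;=\;e\cdot\alpha^{\dim W}\cdot W,
\]
so $W\subseteq\Null(\alpha)$, and since $f(y)\in W$ we conclude $y\in f^{-1}(\Null(\alpha))$, as desired.

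The main technical point to be careful about is the projection formula in the analytic setting for nef Bott--Chern classes restricted to singular subvarieties. This is the only place where one might worry; in practice it follows by passing to resolutions of $Z$ and $W$ and using that a nef class on a compact Kähler manifold satisfies the usual push--pull identity for cycles. Everything else is purely set-theoretic bookkeeping between strict transforms, the exceptional locus, and the definition of $\Null$.
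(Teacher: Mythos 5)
Your proof is correct and follows essentially the same route as the paper's: both inclusions are established via the projection formula, using positive-dimensional fiber components for the $\Ex(f)$ part, strict transforms through points where $f$ is a local isomorphism for the $f^{-1}(\Null(\alpha))$ part, and images of null subvarieties not contained in $\Ex(f)$ for the forward inclusion. The only differences are cosmetic (direct argument versus the paper's proof by contradiction, and your explicit remark on justifying the analytic projection formula, which the paper also uses without comment).
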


\begin{proof}
First we will show that $\Null(f^*\alpha)\subset f^{-1}(\Null(\alpha))\cup \Ex(f)$. To that end assume that $y\notin f^{-1}(\Null(\alpha))\cup \Ex(f)$. Then $f$ is an isomorphism near $y\in Y$. 
Assume by contradiction that $y\in \Null(f^*\alpha)$, then there is a subvariety $V\subset Y$ containing $y$ such that $(f^*\alpha)^{\dim V}\cdot V=0$. Then by the projection formula we have $\alpha^{\dim V}\cdot f_*V=0$. However, since $f$ is an isomorphism near $y$, $f|_V:V:\to f(V)$ is bimeromorphic, and hence $f_*V\neq 0$. Therefore $f(y)\in f(V)\subset \Null(\alpha)$, and so $y\in f^{-1}(\Null(\alpha))$, which is a contradiction.

Now we will show the reverse inclusion. Let $y\in \Ex(f)$ and $E$ an irreducible component of $f^{-1}(f(y))$ containing $y$. Then clearly, $(f^*\alpha)^{\dim E}\cdot E=0$, and thus $y\in \Null(f^*\alpha)$. Now choose $y\in f^{-1}(\Null(\alpha))\setminus\Ex(f)$; then $f$ is an isomorphism near $y$. Since $f(y)\in \Null(\alpha)\setminus f(\Ex(f))$, there is a subvariety $f(y)\in W\subset X$ not contained in $f(\Ex(f))$ such that $\alpha^{\dim W}\cdot W=0$. Let $V\subset Y$ be the strict transform of $W$ under $f$. Then by the projection formula we have $(f^*\alpha)^{\dim V}\cdot V=0$; since $y\in V$, it follows that $y\in \Null(f^*\alpha)$. This completes the proof.
         
\end{proof}

\subsection{Generalized pairs}
\begin{definition}\label{d-gpair}
 	Let $\pi :X\to S$ be a proper morphism of normal K\"ahler
varieties such that $S$ is relatively compact and  $\nu :X'\to X$ a resolution of singularities, $B'$ an $\R$-divisor on $X'$ with simple normal crossings support, and $\beta '$ a real closed smooth $(1,1)$-form on $X'$, such that
\begin{enumerate}
\item $B:=\nu _*B'\geq 0$,
%\item $\beta'$ is a positive current,  
\item $[ \beta ']\in H^{1,1}_{\rm BC}(X')$ is nef over $S$, and 
\item $[K_{X'}+B'+\beta ']=\nu^* \gamma $, where $\gamma \in H^{1,1}_{\rm BC}(X)$.
\end{enumerate}
  Then we let $\beta =\nu _*\beta '$ and we say that $\nu: (X',B'+\beta')\to (X,B+\beta)$  is a generalized pair.
We will often abuse notation and say that $(X,B+\beta)$ (or $(X,B+\beta /S)$) is a generalized pair (over $S$) and $\nu: (X',B'+\beta')\to (X,B+\beta)$ is a log resolution.
We will often assume that   $X$ is compact and $S$ is a point (or that $X=S$) and we omit $\pi :X\to S$.
\end{definition}
Note that given $\beta'$, we can define the corresponding nef b-(1,1) current $\boldsymbol{\beta}:=\overline{\beta'}$ as follows. For any bimeromorphic morphism $p :X''\to X'$ we define $\boldsymbol{\beta}_{X''}=p ^* \beta '$ and for any bimeromorphic morphism $q :X''\to X'''$ we let $\boldsymbol{\beta}_{X'''}=q_*\boldsymbol{\beta}_{X''}$ (in general this is a closed (1,1)-current, not necessarily a (1,1)-form). Using the projection formula, one easily checks that $q_*\boldsymbol{\beta}_{X''}$ is well defined (i.e. $\boldsymbol{\beta}_{X'''}$ does not depend on the choice of the common resolution $X''$ of $X'$ and $X''$) and that for any bimeromorphic morphism $r:X_1\to X_2$ of bimeromorphic models of $X$, we have $r_* \boldsymbol{\beta}_{X_1}=\boldsymbol{\beta}_{X_2}$.
We say that $\overline{\beta'}$ descends to $X'$. Note that for any bimeromorphic morphism $p :X''\to X'$, we also have
$\overline{\beta'}=\overline{\beta''} $ where $\beta '' =\boldsymbol{\beta}_{X''}$, and so $\boldsymbol{\beta}$ also descends to $X''$.

Similarly, if $\nu :Y\to X'$ is a proper bimeromorphic morphism, then write $K_Y+B_Y=\nu ^*(K_{X'}+B')$. For any proper bimeromorphic morphism $\mu:Y\to Y'$ we let $B_{Y'}=\mu _* B_Y$. In this way we have defined a b-divisor  $\mathbf B$ (whose trace $\mathbf B_Y$ on $Y$ is $B_Y$). Since the b-divisor $\mathbf {K+B}=\overline{K_{X'}+B_{X'}}$ and the b-$(1,1)$-form {\mathversion{bold}$\beta$}$=\overline{\beta _{X'}}$ descend to $X'$, we say that the generalized pair $(X,B+\beta)$ descends to $X'$.

     We will often denote the generalized pair $\nu: (X',B'+\beta')\to (X,B+\beta)$ by $(X,B+\bbeta )$ where $\bbeta =\overline{\beta '}$.
     Note that then $\beta '=\bbeta _{X'}$ and $B'=\nu ^*(K_X+B+\beta)-(K_{X'}+\beta ')$.

We define the \emph{generalized discrepancies} $a(P;X,B+\bbeta)=-{\rm mult}_P(\mathbf B_Y)$, where $P$ is a prime divisor on a bimeromorphic model $Y$ of $X$.
We say that $(X,B+\bbeta )$ is \emph{generalized klt} or generalized Kawamata log terminal (resp. \emph{generalized lc} or generalized log canonical) if for any log resolution $\nu:X'\to X$, we have $\lfloor \mathbf B_{X'} \rfloor\leq 0$, i.e. $a(P;X,B+\bbeta)>-1$ for all prime divisors $P$ over $X$ (resp. $a(P;X,B+\bbeta)\geq -1$ for all prime divisors $P$ over $X$). This can be checked on a single given log resolution.
We say that $(X,B+\bbeta )$ is \emph{generalized dlt} (divisorially log terminal) if there is an open subset $U\subset X$
such that $(U,(B+\bbeta)|_U)$ is a log resolution (of itself) and $a(P;X,B+\bbeta)\geq -1$ for every prime divisor $P$ over $X$ such that the generic point of $\rm{center}_X(P)$ is contained in $U$
and $a(P;X,B+\bbeta)>-1$ for every prime divisor $P$ over $X$ with ${\rm center}_X(P)\subset X\setminus U$.

We remark that if $\alpha'$ is a real closed smooth (1,1)-form in the class  $[\beta']$ and $\aalpha =\overline{ \alpha '}$, then we have $a(P;X,B+\bbeta)=a(P;X,B+\aalpha)$, i.e. the generalized discrepancies do not depend on  the choice of the representative of $[\beta']$ (recall that $\bbeta$ descends to $\beta'$).

\begin{remark}
    Following standard convention, in the rest of the article we will say that a generalized pair $(X, B+\bbeta)$ is sub-gklt, sub-glc, etc. if $B$ is not necessarily effective, and gklt, glc, etc. if $B$ is effective.   
\end{remark}

\subsection{Adjunction for Generalized Pairs}\label{subsec:adjunction} In this subsection we will define adjunction for generalized pairs.\\
\begin{itemize}
    \item Recall that, if $(X, S+B)$ is a log canonical pair such that $\lrd S+B\rrd=S$ and $S^\nu\to S$ is the normalization, then by adjunction (see \cite[Chapter 4]{Kol13}) there is an effective $\mbQ$-divisor ${\rm Diff}_{S^\nu}(B)\>0$ on $S^\nu$, called the \emph{different}, such that 
\[  
(K_X+S+B)|_{S^\nu}=K_{S^\nu}+{\rm Diff}_{S^\nu}(B).
\]
\end{itemize}
Now let $(X, B+\bbeta)$ be a generalized pair such that the coefficients of $B$ are $\leq  1$ and $S$ a component of $\lrd B\rrd$. Let $f:X'\to X$ be a log resolution of the $(X, B+\bbeta)$, $S'$ is the strict transform of $S$ and $K_{X'}+B'+\bbeta_{X'}=f^*(K_X+B+\bbeta_X)$. 
Let $S^n\to S$ be the normalization morphism; then $f|_{S'}:S'\to S$ factors through $S^\nu$, and we denote the induced morphism by $g:S'\to S^n$. We define
\[K_{S^n}+B_{S^n}+\bbeta_{S^n}:=(K_X+B+\bbeta_X)|_{S^n}=g_*(K_{S'}+(B'-S')|_{S'}+\bbeta_{X'}|_{S'}).\]

It is easy to see that this definition does not depend on the log resolution $f$, and  $(S^n, B_{S^n}+\bbeta_{S^n})$ is a generalized pair. If $(X, B+\bbeta)$ is a glc pair (in particular, $B$ is a boundary divisor), then from Proposition 4.5 and 4.7 of \cite{Kol13} it follows that $B_{S^\nu}$ is a boundary divisor. Readers may compare this with the more familiar case of adjunction for generalized pairs on algebraic varieties, see \cite[Remark 4.8]{BZ16}.

\begin{remark}
 If $(X, B+\bbeta)$ is a glc pair and $K_X+B$ is $\R$-Cartier, then we can write $K_{X'}+B'-E=f^*(K_X+B)$, where $0\<E=f^*\bbeta _X-\bbeta _{X'}$ is an effective $f$-exceptional $\mbR$-divisor (by the negativity lemma). It then follows easily from the definition above that 
 \[B_{S^n}={\rm Diff}_{S^n}(B)+g_*(E|_{S'})\geq {\rm Diff}_{S^n}(B).\]
\end{remark}

\begin{remark}\label{r-diff}
 Note that $B_{S^n}$ is determined by a surface computation, and therefore if $(X,B+\bbeta )$ is generalized log canonical in codimension 2, then for the purpose of computing $B_{S^n}$ we may assume that $X$ is a surface. A simple computation shows that $(X, B)$ has numerically log canonical singularities, so that $X$ is $\mbQ$-Gorenstein, $K_X+B$ is $\mbR$-Cartier and $(X, B)$ is log canonical.
%Suppose that $f:X'\to X$ is a log resolution. Then $\bbeta _{X'}\equiv _X-\sum p_iP_i$, where $p_i\in \R^{\geq 0}$ and $P_i$ are Cartier. Thus by classification of log canonical surface singularities (working locally over $X$), we may assume that $K_X$ has Cartier index $n\in \mathbb N$. %%and the coefficients of $E:=f^*\bbeta _X-\bbeta _{X'}$ are of the form $\frac{\sum s_ip_i}n$, where $s_i\in \mathbb N$.  Therefore the coefficients of $B_{S^n}$ are of the form $1-\frac 1 n +s_ip_i$ where $s_i\in \mathbb N$.\footnote{CH: Not $1-\frac 1 n +\frac{\sum s_ip_i}n$. Om: Why? The elements of the derived set $D(I)$ is of the form $1-\frac 1 n +\frac{\sum s_ip_i}n$.}
\end{remark}

We will also need the following lemma.
\begin{lemma}\label{l-dlt}
    Let $(X,B+\bbeta _X)$ be a generalized dlt pair. Then for every point $x\in X$, there is a relatively compact Stein open neighborhood $x\in U\subset X$ and a $\mbR$-divisor $\Delta\>0$ on $U$ such that $(U,\Delta)$ is a klt pair.
\end{lemma}
\begin{proof}
By \cite[Remark 2.18]{DHY23} each point $x\in X$ has a relatively compact Stein open neighborhood $U\subset X$ satisfying Property \textbf{P} (see \cite[Definition 2.17]{DHY23}). Thus replacing $X$ by $U$ we may assume that $X$ is a relative compact Stein space satisfying Property \textbf{P}. By standard arguments (cf. the proof of \cite[Proposition 2.43]{KM98}), we may assume that $(X,B+\bbeta _X)$ is a generalized klt pair. Then by \cite[Theorem 2.19(4)]{DHY23} it follows that there is a $\Delta\>0$ such that $(U, \Delta)$ is klt. 
    % Let $\nu:X'\to X$ be a log resolution of $(X,B+\bbeta _X)$. Write $K_{X'}+B'+\bbeta _{X'}=\nu ^*(K_X+B+\beta_X)$, then $-(K_{X'}+B')\equiv _X\bbeta _{X'}$
    % is nef and big over $X$ and hence $-(K_{X'}+B')\equiv_X C'\>0$, where $C'$ is an effective $\mbR$-divisor such that $(X',B'+C')$ is sub-klt, and hence $(X,\Delta:=\nu _*(B'+C'))$ is klt by the Base-point free theorem \cite[Theorem 8.1]{Fuj22} applied to the $\nu$-nef $\mbR$-Cartier divisor $K_{X'}+B'+C'$.
\end{proof}

\subsection{Varieties in Fujiki's class $\mcC$} Recall that an analytic variety $X$ is said to be in Fujiki's class $\mcC$ if $X$ is bimeromorphic to a K\"ahler manifold. In what follows, we will study some basic properties of these varieties which will be useful in running the MMP.

\begin{lemma}\label{lem:pullback-nef}
Let $X$ be a normal compact analytic variety of dimension $3$ in Fujiki's class $\mcC$ and $f:Y\to X$  a proper bimeromorphic morphism from a normal variety $Y$. Then a class $\alpha\in H^{1,1}_{\BC}(X)$ is nef if and only if $f^*\alpha$ in nef on $Y$
\end{lemma}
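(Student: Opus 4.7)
The plan is to reduce to the case of a bimeromorphic morphism between compact K\"ahler manifolds and then invoke Demailly--P\u{a}un's numerical characterization of the K\"ahler cone. First I would form a common K\"ahler resolution $Z$ of $Y$ and $X$: take $\pi\colon Z\to Y$ with $Z$ a compact K\"ahler manifold (available since $Y$ is bimeromorphic to $X$, so $Y$ is in Fujiki's class $\mcC$), and set $q=f\circ \pi\colon Z\to X$. Under the convention that nefness of a class on a normal compact variety in Fujiki's class $\mcC$ is detected by pullback to a K\"ahler resolution, the lemma reduces to the following core statement: for a proper bimeromorphic morphism $p\colon Z\to \widetilde W$ of compact K\"ahler manifolds and a class $\beta\in H^{1,1}_{\BC}(\widetilde W)$, $\beta$ is nef if and only if $p^*\beta$ is nef. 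Applied once to $\pi$ and once to $q$, and using $\pi^*(f^*\alpha)=q^*\alpha$, this yields the full equivalence in the lemma.

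For the forward direction in the core case, suppose $\beta=\lim \beta_n$ with each $\beta_n\in\mathcal K(\widetilde W)$ represented by a K\"ahler form $\omega_n$. Then $p^*\omega_n$ is a smooth, semi-positive $(1,1)$-form on $Z$ (degenerate along $\Ex(p)$), and for any K\"ahler form $\omega_Z$ on $Z$ and $\varepsilon>0$, $p^*\omega_n+\varepsilon\omega_Z$ is K\"ahler. Hence $p^*\beta_n\in\overline{\mathcal K(Z)}$ and passing to the limit gives $p^*\beta\in\overline{\mathcal K(Z)}$, i.e. $p^*\beta$ is nef.

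The substantive direction is the reverse. Fix a K\"ahler form $\omega_{\widetilde W}$ on $\widetilde W$ and show that $\beta+\delta\omega_{\widetilde W}$ is K\"ahler for every $\delta>0$, which is exactly the statement that $\beta$ is nef. By Demailly--P\u{a}un it suffices to verify $\int_V (\beta+\delta\omega_{\widetilde W})^{\dim V}>0$ for every irreducible analytic subvariety $V\subset \widetilde W$. Given such $V$ of dimension $d$, choose an irreducible component $\widetilde V\subset Z$ of $p^{-1}(V)$ with $\dim\widetilde V=d$ mapping bimeromorphically onto $V$. The projection formula then yields
\[
\int_V(\beta+\delta\omega_{\widetilde W})^d \;=\; \int_{\widetilde V}(p^*\beta+\delta\, p^*\omega_{\widetilde W})^d \;=\; \sum_{k=0}^d\binom{d}{k}\delta^k\int_{\widetilde V}(p^*\beta)^{d-k}(p^*\omega_{\widetilde W})^k.
\]
Each summand is non-negative because $p^*\beta$ is nef on $Z$ and $p^*\omega_{\widetilde W}$ is a smooth semi-positive $(1,1)$-form, so approximating $p^*\beta$ by K\"ahler classes on $Z$ shows that mixed intersections of nef classes with semi-positive smooth classes over an irreducible subvariety are $\geq 0$. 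The $k=d$ summand is $\delta^d\int_V\omega_{\widetilde W}^d>0$ since $\omega_{\widetilde W}$ is K\"ahler, giving the required strict positivity.

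The main obstacle I expect is justifying the first reduction when the target is only in Fujiki's class $\mcC$ and not itself K\"ahler, since Demailly--P\u{a}un is formulated for K\"ahler manifolds and we lose a reference K\"ahler form on the singular target. This is handled by exploiting the earlier Lemmas \ref{lem:non-kahler-locus} and \ref{lem:null-locus}, together with \cite[Theorem 1.1]{CT15}, which allow one to transfer the non-K\"ahler locus and nullity information between $X$ and a K\"ahler resolution $\widetilde X\to X$, and so confirm that nefness on $X$ coincides with nefness of the pullback to any K\"ahler resolution. The dimension $3$ hypothesis enters through the rational singularities and the Boucksom--Zariski machinery set up earlier, ensuring the resolutions behave well enough for this reduction to go through.
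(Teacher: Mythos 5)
There is a genuine gap, and it sits exactly where the whole difficulty of the lemma lies. The paper's own proof is a citation of \cite[Lemma 3.13]{HP16}, which in turn rests on P\u{a}un's theorem on inverse images of nef classes; nothing is proved from scratch there. In your proposal, the step ``under the convention that nefness of a class on a normal compact variety in Fujiki's class $\mcC$ is detected by pullback to a K\"ahler resolution, the lemma reduces to the manifold case'' assumes precisely the content of the lemma (applied to a resolution $\widetilde X\to X$) rather than proving it. In this paper, following \cite{HP16}, nefness of $\alpha\in H^{1,1}_{\BC}(X)$ on a singular $X$ is defined intrinsically: for every $\varepsilon>0$ one must produce a representative $\alpha+i\ddbar f_\varepsilon\geq -\varepsilon\omega_X$ with $f_\varepsilon$ smooth on $X$. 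The hard implication is therefore the descent statement: if $q^*\alpha$ is nef on a K\"ahler resolution $Z\to X$, one must manufacture such $\varepsilon$-almost-positive smooth potentials \emph{downstairs} on the singular, merely Fujiki-class $X$. Your argument never addresses this; granting it as a ``convention'' makes the rest of the proof a near-tautology. The patch you suggest in the last paragraph does not close this hole: Lemmas \ref{lem:non-kahler-locus} and \ref{lem:null-locus} and \cite[Theorem 1.1]{CT15} concern \emph{big} classes and their non-K\"ahler and null loci, whereas the class $\alpha$ in the lemma is an arbitrary nef candidate (not assumed big), and in any case knowledge of $E_{nK}$ or $\Null$ does not produce the smooth potentials on $X$ that the definition of nefness requires. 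Likewise, ``rational singularities and the Boucksom--Zariski machinery'' is not where the proof of \cite[Lemma 3.13]{HP16} gets its traction; the essential input is P\u{a}un's result that nefness is preserved under taking direct images along proper surjections, and some such ingredient must be invoked or reproved.

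Two smaller points. First, your core manifold-case argument is essentially fine but slightly imprecise as stated: Demailly--P\u{a}un says the K\"ahler cone is a \emph{connected component} of the cone of classes with positive top self-intersections on all subvarieties, so positivity of $\int_V(\beta+\delta\omega_{\widetilde W})^{\dim V}$ alone does not yet give K\"ahlerness; you need to add that the whole ray $\{\beta+t\omega_{\widetilde W}:t\geq\delta\}$ satisfies these inequalities and contains K\"ahler classes for $t\gg 0$, hence lies in the K\"ahler component (or quote the corollary of Demailly--P\u{a}un characterizing nef classes by $\int_V\beta^k\wedge\omega^{\dim V-k}\geq 0$). This is fixable in one line. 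Second, since $Y$ is only normal, the same descent problem recurs for $\pi:Z\to Y$; your reduction silently uses the ``convention'' there as well. So the structure of a correct proof is: easy direction by pulling back $\varepsilon$-almost-positive representatives (using $f^*\omega_X\leq C\omega_Y$ by compactness), and hard direction by P\u{a}un-type descent, which is exactly what the citation to \cite{HP16} encapsulates and what your write-up leaves unproved.
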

\begin{proof}
    This is \cite[Lemma 3.13]{HP16}.
\end{proof}

\begin{lemma}\label{lem:f-adjoint-nef}
 Let $(X, B)$ be a $\mbQ$-factorial dlt pair, where $X$ is a compact analytic variety of dimension $\<3$ in Fujiki's class $\mcC$. If $K_X+B$ is pseudo-effective, then $K_X+B$ is nef if and only if $(K_X+B)\cdot C\>0$ for every compact curve $C\subset X$.
\end{lemma}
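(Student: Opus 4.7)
The direction $K_X+B$ nef $\Rightarrow$ $(K_X+B) \cdot C \geq 0$ is immediate, since compact curves lie in $\overline{\rm NA}(X)$. For the converse, when $\dim X \leq 2$ the statement reduces to the classical fact that on a compact surface in Fujiki's class $\mcC$ nefness of a $(1,1)$ class is tested by intersection with compact curves; so we focus on the case $\dim X = 3$.

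The plan is to use the Boucksom--Zariski decomposition (Definition \ref{def:mod-nef}) to write $K_X+B = P + N$, where $N \geq 0$ is an effective $\R$-divisor (by $\Q$-factoriality) and $P$ is modified nef (by Lemma \ref{lem:positive-part}), and then argue in two steps.

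\emph{Step 1 ($N = 0$).} Suppose for contradiction that some irreducible component $D$ of $N$ occurs with coefficient $a > 0$. By $\Q$-factorial dlt, $D$ is $\Q$-Cartier and its normalization $D^\nu$ is a normal compact surface in Fujiki's class $\mcC$. Restricting $K_X+B=P+N$ to $D^\nu$ via adjunction (perturbing by a small multiple of a K\"ahler class if needed to ensure pseudo-effectivity of the restriction), the term $a D|_{D^\nu}$ lies in the negative part of a surface-level Zariski decomposition on $D^\nu$. The negative-definiteness of the intersection matrix of the Zariski negative part on a normal surface then produces an irreducible compact curve $C \subset D$ with $(K_X+B) \cdot C = (K_X+B)|_{D^\nu} \cdot C < 0$, contradicting the hypothesis.

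\emph{Step 2 ($P$ is nef).} Once $N = 0$, the class $K_X+B = P$ is modified nef. The failure of a modified nef class to be nef on a $\Q$-factorial threefold in Fujiki's class $\mcC$ is concentrated on a codimension-$\geq 2$ locus (divisorial obstructions being absorbed into modified-nefness), which in dimension three is a finite union of compact curves -- using Lemmas \ref{lem:non-kahler-locus} and \ref{lem:null-locus} on a K\"ahler resolution together with the orthogonality in the Boucksom--Zariski decomposition to identify this locus. The hypothesis $(K_X+B) \cdot C \geq 0$ for every compact curve then forces $K_X+B$ to be nef.

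The principal obstacle is Step 1: producing a compact negative curve inside a component of $N$. One must carefully descend the Boucksom--Zariski negativity from $X$ to the surface $D^\nu$ through normalization and resolution, handling the subtlety that $(K_X+B)|_{D^\nu}$ need not itself be pseudo-effective. The crucial input is the orthogonality $P^{\dim X-1}\cdot D = 0$ of the Boucksom--Zariski decomposition (on a K\"ahler resolution) combined with the surface-level Zariski decomposition on $D^\nu$.
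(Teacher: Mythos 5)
Your forward direction and the surface case are fine, but the heart of your argument for $\dim X=3$ has a genuine gap, and it is exactly at the point you flag as the ``principal obstacle.'' In Step 1 you need a curve $C$ inside a component $D$ of the negative part $N$ with $(K_X+B)\cdot C<0$, and you try to get it by restricting the Boucksom--Zariski decomposition to $D^\nu$ and invoking negative definiteness of a surface Zariski negative part. Neither ingredient is available: the restriction of a divisorial Zariski decomposition to a component of $N$ is not the Zariski decomposition of the restricted class (which, as you note, need not even be pseudo-effective, and perturbing by $\epsilon\omega$ destroys any reason for $aD|_{D^\nu}$ to sit in the negative part), and in dimension $3$ the negative part has no pointwise negativity property along its components -- the negative definiteness you invoke is a purely two-dimensional phenomenon, and there is no orthogonality statement of the form $P^{\dim X-1}\cdot D=0$ established in the paper or strong enough to substitute for it. In fact the statement you are implicitly trying to prove in Step 1, namely that a pseudo-effective class which is nonnegative on all curves has vanishing negative part, is false for general $(1,1)$ classes on compact K\"ahler threefolds: a rigid prime divisor whose support contains no curves at all (e.g.\ a surface without curves with non-pseudo-effective normal bundle) gives a class that is nonnegative on every curve of $X$ yet has nonzero negative part and is not nef. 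So any correct proof must use the adjoint structure $K_X+B$, which your argument never does. Step 2 has the same defect: a modified nef class whose non-nef locus has codimension $\geq 2$ is not thereby detected by curves, and Lemmas \ref{lem:non-kahler-locus} and \ref{lem:null-locus} concern the null and non-K\"ahler loci of \emph{big} classes, not the non-nef locus of a merely pseudo-effective one.

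For comparison, the paper's proof uses the adjoint structure in an essential way: if $K_X+B$ is not nef, then by the nefness criterion of \cite[Theorem 2.36, Remark 2.37]{DHP22} and the hypothesis on curves there is a surface $S\subset X$ with $(K_X+B)|_S$ not pseudo-effective; writing $K_X+B\equiv\sum\lambda_iS_i+\beta$ via the Boucksom--Zariski decomposition and arguing as in \cite[Lemma 4.1]{HP16} (adjunction), one deduces that $K_{S^\nu}$ is not pseudo-effective, hence the minimal resolution $\tilde S$ has $\kappa(\tilde S)=-\infty$ and is projective by \cite[Theorem 1.3]{Fuj21}, so $S$ is Moishezon; then, as in \cite[Corollary 4.2]{HP16}, $S$ is covered by curves on which $K_X+B$ is negative, contradicting the hypothesis. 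The Moishezon/uniruledness step is what guarantees the existence of the negative curves that your formal decomposition argument cannot produce; without some version of it your Steps 1 and 2 cannot be repaired.
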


\begin{proof}
We will prove the dimension $3$ case and leave the surface case as an easy exercise for the reader. The only if part follows easily by passing to a resolution of singularities $f:X'\to X$ such that $X'$ is a K\"ahler manifold. Conversely, assume that $(K_X+B)\cdot C\>0$ for all curves $C\subset X$ and $K_X+B$ is not nef. Then by \cite[Theorem 2.35 and Remark 2.36]{DHP22}, there is a surface $S\subset X$ such that $(K_X+B)|_S$ is not pseudo-effective. Let $K_X+B\num \sum\lambda_iS_i+\beta$ be the Boucksom-Zariski decomposition of $K_X+B$ as in Definition \ref{def:mod-nef}. Then by a similar argument as in the proof of \cite[Lemma 4.1]{HP16} it follows that $S=S_i$ for some $\lambda _i>0$. By adjunction on the normalization $S^\nu\to S$ and the above decomposition of $K_X+B$ it follows that there is an effective $\mbQ$-divisor $\Delta\geq 0$ such that $K_{S^\nu}+\Delta$ is $\mbQ$-Cartier but not pseudo-effective. Let $\tilde S\to S^\nu$ be the minimal resolution of $S^\nu$. Then $K_{\tilde S}$ is also not pseudo-effective, and from surface classification (see \cite[Table 10, page 244]{BHPV04}) it follows that $\kappa(\tilde S)=-\infty$. Since $\tilde S$ is in Fujiki's class $\mcC$, by \cite[Theorem 1.3]{Fuj21}, $\tilde S$ is projective, and thus $S$ is Moishezon. Then we arrive at a contradiction by a similar argument as in the proof of \cite[Corollary 4.2]{HP16}. 
\end{proof}

\begin{lemma}\label{lem:fujiki-to-kahler-surface}
    Let $(S, B+\bbeta )$ be a gdlt pair, where $S$ is a compact analytic surface in Fujiki's class $\mcC$. Then $S$ is a K\"ahler surface with $\mbQ$-factorial rational singularities. 
\end{lemma}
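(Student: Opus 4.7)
The plan is to separate the proof into a local singularity analysis driven by the gdlt hypothesis and a global K\"ahler descent argument driven by the Fujiki class $\mcC$ hypothesis.

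For the singularities of $S$: after perturbing $B$ and the nef class $\bbeta$ slightly, the gdlt hypothesis produces an effective $\mbR$-divisor $B'$ on $S$ such that $(S,B')$ is klt in the classical sense; equivalently, $S$ has klt-type singularities. In dimension two, klt singularities are quotient singularities by the classical theorem of Kawamata, whose proof is purely local-analytic and transfers verbatim to the compact analytic category. Quotient surface singularities are in particular rational. Moreover, by the classical result that the intersection matrix of the exceptional divisor of the minimal resolution of a rational surface singularity is negative definite---so that any Weil divisor on $S$ admits a numerical extension to the resolution that descends to a $\mbQ$-Cartier divisor---every Weil divisor on $S$ is $\mbQ$-Cartier. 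Hence $S$ is $\mbQ$-factorial with rational singularities.

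For the K\"ahler property of $S$: let $f:\tilde S\to S$ be a resolution with $\tilde S$ smooth. Since $S$ is in Fujiki's class $\mcC$ and this class is preserved under bimeromorphic modifications, $\tilde S$ also belongs to Fujiki's class. Because the first Betti number is invariant under blowups and blowdowns of smooth compact complex surfaces, and is even on K\"ahler surfaces, $b_1(\tilde S)$ is even, and the Kodaira--Siu theorem then implies $\tilde S$ is K\"ahler. To descend K\"ahlerness to $S$, I would invoke the theorem of Varouchas that a normal compact analytic surface with rational (equivalently, quotient) singularities is K\"ahler whenever any of its resolutions is K\"ahler. Alternatively, one may construct a K\"ahler form on $S$ directly by combining the pushforward $f_*\omega_{\tilde S}$ (a closed positive $(1,1)$-current with local potentials by \cite[Lemma 3.4]{HP16}) with strictly plurisubharmonic potentials on small Stein neighborhoods of the singular points (which exist since the local germs $\mbC^2/\Gamma$ are Stein and K\"ahler), patched via a partition of unity.

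I expect the K\"ahler descent from $\tilde S$ to $S$ to be the main obstacle. The naive pushforward $f_*\omega_{\tilde S}$ is only a positive current with local potentials and need not induce a genuine K\"ahler form near the singular points, so one must use either Varouchas' analytic constructions or an explicit local gluing argument exploiting the K\"ahlerness of neighborhoods of quotient singularities. All remaining steps---perturbing the gdlt boundary to reduce to a classical klt pair, appealing to the classification of klt surface singularities, and invoking the Kodaira--Siu theorem on the resolution---are essentially standard once the target statements are set up correctly.
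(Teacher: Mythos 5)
Your overall architecture (local singularity analysis plus global K\"ahler descent) is reasonable, but the step you yourself identify as the crux --- descending the K\"ahler property from the resolution to $S$ --- is not actually established. The appeal to ``the theorem of Varouchas that a normal compact analytic surface with rational (equivalently, quotient) singularities is K\"ahler whenever any of its resolutions is K\"ahler'' is not a citable statement in that form (and note that rational and quotient surface singularities are not equivalent; quotient $\Rightarrow$ rational but not conversely). Your fallback construction does not work as written: a partition of unity applied to potentials, or a convex combination of a global form with $i\ddbar$ of a local strictly psh function, does not preserve positivity on the gluing annulus, and the pushforward $f_*\omega_{\tilde S}$ is only a positive current whose behaviour near the images of the exceptional curves must be controlled before any regularized-maximum gluing can be run. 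This descent is exactly what the paper proves, by a different and essentially cohomological route: using negative definiteness of the exceptional lattice one finds the unique exceptional $\mbR$-divisor $E$ with $\omega'+E\equiv_S 0$, descends $\omega'+E$ to a class $\omega\in H^{1,1}_{\rm BC}(S)$ via \cite[Lemma 3.3]{HP16} (this is where rational singularities and class $\mcC$ enter), shows $E\geq 0$ by the negativity lemma so that $\omega$ is big, checks $\omega\cdot C>0$ for every curve, and then concludes nefness by \cite[Theorem 2.36, Remark 2.37]{DHP22} and K\"ahlerness by the transcendental Nakai--Moishezon type criterion \cite[Theorem 2.30]{DHP22}. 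Without either this criterion or a genuinely careful gluing argument, your proof of the K\"ahler statement has a gap.

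There is also a problem in your reduction of the singularity statement to a classical klt pair. For a generalized pair only $K_S+B+\bbeta_S$ is assumed $\mbR$-Cartier; to ``perturb $B$ and the nef class $\bbeta$ slightly'' and speak of discrepancies of a classical pair $(S,B')$ you would need $K_S+B'$ to be $\mbR$-Cartier, which presupposes the $\mbQ$-factoriality you are trying to prove, and the transcendental nef part $\bbeta_S$ need not be representable by an effective $\mbR$-divisor on a non-algebraic surface, so it cannot simply be absorbed into the boundary. The paper avoids this by quoting \cite[Lemma 2.20]{DHY23}, which in dimension two rests on the numerical criterion of \cite[Proposition 4.11]{KM98} (numerically dlt surface singularities are rational and $\mbQ$-factorial); that criterion applies directly in the generalized setting because the nef part pairs non-negatively with exceptional curves, with no need to produce an auxiliary klt boundary. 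Once rationality and $\mbQ$-factoriality are known, your remarks on quotient singularities and finiteness of the local class group are fine but no longer needed.
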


\begin{proof}
Following a similar argument as in \cite[Proposition 2.43]{KM98} we may assume that $(S, B+\bbeta)$ is locally gklt. Thus by \cite[Lemma 2.20]{DHY23}, $S$ has $\mbQ$-factorial rational singularities.
% We begin by showing that $S$ has rational singularities and is $\Q$-factorial. Since the question is local, we may assume that $S$ is Stein and by usual arguments, we may assume that $(S,B+\beta ) $ is gklt. The claim follows from  \cite[Theorem 2.19 and Lemma 2.20]{DHY23}.  
Now let $\nu :S'\to S$ be a resolution of singularities of $S$ such that $S'$ is a K\"ahler manifold. Let $\omega '$ be a K\"ahler class on $S'$. By the Hodge index theorem, the intersection matrix of the exceptional curves of $\nu$ is negative definite. Thus there is a unique $\nu$-exceptional $\mbR$-divisor $E$ such that $\omega'+E\num_S 0$. Since $S$ has rational singularities and it belongs to Fujiki's class $\mcC$, by \cite[Lemma 3.3]{HP16} there is a class $\omega\in H^{1,1}_{\BC}(S)$ such that $\omega'+E=\nu^*\omega$. From the negativity lemma it follows that $E\>0$. Thus $\omega=\nu_*(\omega'+E)=\nu_*\omega'$ is a big class on $S$. 
% then is a unique fix $E$ an exceptional $\R$-divisor so that $\omega ' +E\equiv _S0$ (this is possible because the intersection matrix for the exceptional curves is negative definite). By the negativity lemma $E\geq 0$. By \cite[Lemma 6.1]{HP16} we have  $\omega '+E\equiv \nu ^* \omega $ for some class $\omega \in H^{1,1}_{\rm BC}(S)$.
%      Since $\omega =\nu _* \omega '$, then $\omega$ is big.
If $C\subset S $ is any (compact) curve and $C':=\nu ^{-1}_*C$, then $\omega \cdot C=(\omega '+E)\cdot C'>0$. Then by \cite[Theorem 2.36 and Remark 2.37]{DHP22}, $\omega$ is nef, and by \cite[Theorem 2.30]{DHP22}, $\omega $ is a  K\"ahler class.
\end{proof}

\begin{theorem}[Termination of Flips]\label{thm:termination}
    Let $(X, B)$ be a $\mbQ$-factorial dlt pair, where $X$ is a compact analytic $3$-fold in Fujiki's class $\mcC$. Then every sequence of $(K_X+B)$-flips terminates.
\end{theorem}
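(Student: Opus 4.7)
The plan is to adapt Shokurov's difficulty function argument to the analytic setting. Define the \emph{difficulty}
\[
d(X, B) := \#\{E \text{ prime divisor over } X : a(E, X, B) < 1\}.
\]
This set is finite because $(X, B)$ is dlt: taking a log resolution $f \colon Y \to X$ (which exists in the analytic category via Hironaka), any divisor $E$ with $a(E, X, B) < 1$ either appears on $Y$ or is obtained by further blowing up a klt locus, and elementary discrepancy bookkeeping shows only finitely many such $E$ have discrepancy below $1$.

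For each flip $X_i \dashrightarrow X_{i+1}$ in the sequence, I would take a common log resolution $W \to X_i$, $W \to X_{i+1}$ and apply the negativity lemma, which is valid for proper bimeromorphic morphisms between normal compact analytic varieties, to conclude that discrepancies weakly increase across the flip: $a(E, X_i, B_i) \leq a(E, X_{i+1}, B_{i+1})$ for every prime divisor $E$ over $X_i$. Hence the integer sequence $d(X_i, B_i)$ is non-increasing. To force strict decrease, I would invoke a special termination argument: in dimension $3$, after finitely many flips the flipping locus becomes disjoint from $\lfloor B \rfloor$, so one may assume $(X, B)$ is klt in a neighborhood of the flipping locus. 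Then the 3-fold analysis of Shokurov-Koll\'ar-Mori produces, for each flipping contraction, an exceptional divisor $E$ with center on the flipping curve satisfying $a(E, X_i, B_i) < 1 \leq a(E, X_{i+1}, B_{i+1})$, so $E$ drops out of the counting set and $d$ strictly decreases. An infinite sequence of strictly decreasing non-negative integers is impossible, giving termination.

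The main obstacle I anticipate is justifying the strict-decrease step in the analytic setting, rather than the monotonicity, which is formal. Since flipping contractions are projective morphisms by definition, the local structure of $X_i \to Z_i \leftarrow X_{i+1}$ is identical to the projective case, so the Shokurov-Koll\'ar-Mori analysis of $3$-fold flipping contractions transfers verbatim, and produces the required divisor $E$ whose discrepancy crosses the threshold $1$. A second subtlety is the special termination reduction: this requires that along the sequence the number of components of $\lfloor B_i \rfloor$ meeting the flipping locus eventually stabilizes, together with termination of the induced flip-like operations on the normalizations of these components (surfaces in Fujiki's class $\mathcal{C}$), which follows from Lemma \ref{lem:fujiki-to-kahler-surface} and 2-dimensional termination. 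All remaining ingredients—Hironaka resolution, the negativity lemma, and boundedness of the set of exceptional divisors with bounded discrepancy—are available in the analytic category, as used throughout this paper and in \cite{Fuj22} and \cite{DHP22}.
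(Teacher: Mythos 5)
Your first step—special termination to reduce to the case where the flipping locus avoids $\lfloor B\rfloor$, using surface termination in Fujiki's class $\mathcal C$—matches the paper, which invokes Fujino's special termination \cite{Fuj07} together with his surface MMP \cite{Fuj21}. The problem is the klt case, where your argument has two genuine gaps. First, your difficulty $d(X,B)=\#\{E : a(E,X,B)<1\}$ is \emph{not} finite once $B\neq 0$: if $S$ is a component of $B$ with coefficient $b>0$ and $X$ is smooth at the generic point of a curve $C\subset S$ not contained in any other component, blowing up $C$ produces a divisor with $a(E,X,B)=1-b<1$, and there are infinitely many such curves $C$, hence infinitely many distinct divisorial valuations in your counting set. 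The "elementary discrepancy bookkeeping" you appeal to only bounds divisors with $a\le 0$ (or works for terminal pairs); it does not bound the set below the threshold $1$ for a dlt pair with boundary.

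Second, the strict-decrease step does not transfer from the terminal to the klt setting. To produce a divisor $E$ with $a(E,X_i,B_i)<1\le a(E,X_{i+1},B_{i+1})$ by blowing up a general point of the flipped curve $C^+$, you need $X_{i+1}$ to be smooth (or at least terminal) at the generic point of $C^+$ and $C^+\not\subset\operatorname{Supp}(B_{i+1})$. Neither holds in general: klt (even canonical) $3$-fold singularities need not be isolated, so $C^+$ may lie entirely in the singular locus, and after special termination $C^+$ may still be contained in a fractional component of $B_{i+1}$; in either case the exceptional divisor over the generic point of $C^+$ can again have discrepancy below $1$ and never leaves the counting set. This is exactly why Kawamata's termination of klt $3$-fold log flips \cite{Kaw92} uses a refined difficulty (a weighted count indexed by sums of boundary coefficients, together with a local analysis of the generic singularity along curves), not the naive terminal-case difficulty; the Shokurov--Koll\'ar--Mori analysis you cite is the terminal argument and does not apply verbatim. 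The paper sidesteps all of this by quoting \cite[Theorem 3.3]{DO23} (which implements Kawamata's argument) and only checking that its proof never uses the K\"ahler hypothesis and that the relative MMPs it needs are available for projective morphisms by \cite{Nak87}, \cite{Fuj22}, \cite{DHP22}. Your observation that the analytic category causes no new problems for the local structure of flips is correct, but it does not repair the two gaps above.
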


\begin{proof}
First we note that Fujino's proof of Special Termination (see \cite{Fuj07}) holds here as the log MMP for surfaces in Fujiki's class $\mcC$ is known again due to Fujino, see \cite{Fuj21}. So we may assume that $(X, B)$ is a klt pair. Next we claim that the proof presented in \cite[Theorem 3.3]{DO23} (which is based on the proof of \cite{Kaw92}) holds in our case without any change. This is because even though $X$ is assumed to be K\"ahler in \cite{DO23}, that property was never used in the proof, and all the necessary MMPs used in that proof are relative MMPs for projective morphisms which are known to exist due to \cite{Nak87} and more recently by \cite{DHP22} and  \cite{Fuj22}. 
\end{proof}

\section{Blowing down analytic spaces to points}\label{sec:point-contractions}
Recall the following result, generalizing \cite{Gra62}, that is claimed in \cite[Proposition 7.4]{HP16}.
\begin{claim}\cite[Proposition 7.4]{HP16}\label{c-HP16} 
Let $X$ be a normal compact complex space and $S$ a $\mbQ$-Cartier prime Weil divisor on $X$ with Cartier index $m$. Suppose that $S$ admits a morphism with connected fibres $f: S \to T$ such that $\mathcal O_S(-mS)$
is $f$-ample. Then there exists a bimeromorphic morphism $\varphi :X\to Y$  to
a normal compact complex space $Y$ such that $\varphi|_S = f$ and $\varphi|_{X\setminus S}$ is an
isomorphism onto $Y \setminus T$.
\end{claim}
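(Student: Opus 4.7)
The plan is to reduce to a local problem on $T$ and invoke Grauert's contractibility criterion in its analytic form. Fix $t\in T$; since $f$ is proper with connected fibers, $F_t:=f^{-1}(t)\subset S\subset X$ is a compact connected analytic set. Let $U\subset T$ be a small relatively compact Stein neighborhood of $t$. After shrinking $U$ we may assume that in a neighborhood $W$ of $f^{-1}(U)$ in $X$, the $\Q$-Cartier divisor $mS$ is Cartier, so $\mcO_X(-kmS)$ is a genuine line bundle on $W$ for every $k\geq 1$. The goal is to construct, for each such $t$, a proper bimeromorphic morphism $\varphi_{t,U}:W\to Y_U$ contracting $f^{-1}(U)$ onto $U$ and being an isomorphism on $W\setminus S$, and then to glue these as $t$ varies.

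The key technical step is to verify the hypothesis of Grauert's contraction theorem, namely that each fiber $F_t$ is exceptional in $X$ in the sense that the conormal sheaf $N^\vee_{S/X}\otimes k|_{F_t}=\mcO_S(-kmS)|_{F_t}$ is ample for large $k$; this is immediate from the $f$-ampleness assumption. Concretely, I would use sections to implement the contraction: since $\mcO_S(-kmS)|_{f^{-1}(U)}$ is $f$-ample, for $k\gg 0$ the coherent sheaf $f_*\mcO_S(-kmS)|_U$ is generated by global sections that provide a closed embedding $f^{-1}(U)\hookrightarrow \mathbb{P}(f_*\mcO_S(-kmS))$ over $U$. Using the short exact sequence
\[
0\to \mcO_X(-(k+1)mS)\to \mcO_X(-kmS)\to \mcO_S(-kmS)\to 0,
\]
one lifts these sections to $\mcO_X(-kmS)$ on a strongly pseudoconvex neighborhood $W'\Subset W$ of $f^{-1}(U)$; here the required vanishing $H^1(W',\mcO_X(-(k+1)mS))=0$ follows from the Andreotti--Grauert finiteness theorem applied to the strongly pseudoconvex $W'$ together with the ample twist. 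The lifted sections produce a holomorphic map $W'\to \mathbb{P}^N\times U$ whose Stein factorization / Remmert reduction yields $\varphi_{t,U}:W'\to Y_U$ that blows $f^{-1}(U)$ down to a neighborhood of $t$ and is biholomorphic on $W'\setminus S$; normality of $Y_U$ follows from normality of $X$ and Zariski's main theorem in the analytic setting.

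Finally, on overlaps $W'_{t_1}\cap W'_{t_2}$ the two local contractions agree by the universal property of the Remmert reduction: any two bimeromorphic morphisms from a fixed complex space onto normal targets that contract the same analytic subset to the same image must coincide. Hence the $\varphi_{t,U}$ glue to a global bimeromorphic morphism $\varphi:X\to Y$ with the required properties, and $Y$ is compact because $X$ is. The main obstacle I expect is the lifting step in Paragraph~2: one must choose the pseudoconvex neighborhood $W'$ carefully and use the $f$-ampleness to guarantee the higher cohomology vanishing needed to lift enough sections. This is exactly the content of the analytic contraction theorem of \cite{Gra62} in the parametrized form; the argument we would give is essentially the one underlying the statement recorded in \cite[Proposition 7.4]{HP16}.
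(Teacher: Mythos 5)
You are attempting to prove a statement that the paper does not prove and, in fact, records only in order to refute it: Claim \ref{c-HP16} is quoted from \cite[Proposition 7.4]{HP16}, and the paper immediately points out that it contradicts \cite[Proposition 3]{Fuj74} and that the proof in \cite{HP16} ``confuses the divisor $S$ with its thickenings.'' The correct tool is Fujiki's Theorem \ref{t-fuj74}, which requires, in addition to ampleness of $\mathcal{O}_A(-A)$ on a Cartier thickening $A$ of $S$, the vanishings $R^1f_*\mathcal{O}_A(-kA)=0$ for all $k>0$; the paper only recovers unconditional contractions when $\dim T=0$ (Lemma \ref{l-cont}), and for positive-dimensional $T$ it proves the weaker Theorem \ref{t-ext}, where the gplt structure and relative K\"ahler negativity supply the needed vanishing via Kawamata--Viehweg (Lemma \ref{l-wdvan}, Claim \ref{c-van}). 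Under the bare hypotheses of the Claim those vanishings are unavailable, and the conclusion can genuinely fail.

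Your proposal reproduces exactly the gap the paper identifies. First, the quotient in your exact sequence is $\mathcal{O}_X(-kmS)/\mathcal{O}_X(-(k+1)mS)\cong\mathcal{O}_{mS}(-kmS)$, a sheaf on the non-reduced space $mS$, not $\mathcal{O}_S(-kmS)$; more importantly, to obtain a morphism $\varphi$ with $\varphi|_S=f$ one must extend $f$ itself to every infinitesimal neighborhood $S_k$ of $S$, which requires the surjectivity of $f_*\mathcal{O}_{S_{k+1}}\to f_*\mathcal{O}_{S_k}$, i.e.\ the vanishing of $R^1f_*$ of the twisted graded pieces $\mathcal{I}^k/\mathcal{I}^{k+1}$. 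When $T$ is a point these vanishings follow from Serre vanishing after replacing $m$ by a multiple (this is how Lemma \ref{l-cont} works), but for $\dim T>0$ they do not follow from $f$-ampleness of $\mathcal{O}_S(-mS)$, and there is no cohomological input in your argument that produces them. Your appeal to Andreotti--Grauert on a ``strongly pseudoconvex neighborhood $W'$ of $f^{-1}(U)$'' cannot close this hole: a neighborhood of the exceptional set is $1$-convex, so that Grauert-type lifting of sections applies, essentially only when the fibres are contracted to points; there is no ``parametrized'' form of \cite{Gra62} over a positive-dimensional base, and \cite[Proposition 3]{Fuj74} shows the contraction may simply not exist. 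The final gluing step then presupposes local contractions whose existence is precisely what is at issue. If you want a correct statement, you must either take $T$ to be a point (Lemma \ref{l-cont}) or add hypotheses guaranteeing $R^1f_*\mathcal{O}_A(-kA)=0$, as in Theorem \ref{t-ext}.
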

The above claim contradicts \cite[Proposition 3]{Fuj74}, and the proof seems to confuse the divisor $S$ with its thickenings (a similar issue occurs in \cite{CHP16}). To be more precise the strategy of \cite[Proposition 7.4]{HP16} is based on \cite{AT84} which also appears to contain an error. One should instead use the corresponding result in \cite[Theorem 2]{Fuj74} which states:
\begin{theorem}\cite[Theorem 2]{Fuj74}\label{t-fuj74}
Let $X$ be a reduced complex space, $A$ an effective Cartier
divisor on $X$ and $f: A\to A'$ a proper surjective morphism of $A$ onto another
complex space $A'$.  Assume that the following conditions hold:
\begin{enumerate}
    \item $\mathcal O _A(-A)$ is $f$-ample, and
    \item $R^1f_*\OO _A(-kA)=0$ for all $k>0$. 
\end{enumerate}
Then there exists a blowing down $F:X\to X'$ such that $F_*\mathcal S_{X,A,f}=\OO _{X'}$, where $\mathcal S_{X,A,f}$ is the coherent sheaf defined by the following short exact sequence
\[ 0\to \mathcal S_{X,A,f}\to \OO _X \to \OO _A/f^{-1}\OO _{A'}\to 0.\] 
\end{theorem}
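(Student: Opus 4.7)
The plan is to construct the blow-down $X'$ in three stages, following the classical Grauert strategy that Fujiki generalizes: first define $X'$ as a topological space, then as a ringed space using the candidate structure sheaf $F_{*}\mathcal{S}_{X,A,f}$, and finally verify that this ringed space is a reduced complex analytic space. As a topological space, I would set $X' := X \sqcup_{A} A'$, the pushout that identifies each fiber of $f$ to a point and is the identity on $X\setminus A$, and let $F : X \to X'$ be the natural surjective proper map. Outside $A'$ one has $\mathcal{S}_{X,A,f} = \mathcal{O}_X$, so the claim is purely local at points $y \in A'$: near such $y$ one must show $F_{*}\mathcal{S}_{X,A,f}$ is the structure sheaf of an honest analytic germ.

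The analytic input comes from the infinitesimal neighborhoods $A_n := V(\mathcal{I}_A^{n+1})$, where $\mathcal{I}_A = \mathcal{O}_X(-A)$. The short exact sequences
\[ 0 \longrightarrow \mathcal{O}_A(-nA) \longrightarrow \mathcal{O}_{A_n} \longrightarrow \mathcal{O}_{A_{n-1}} \longrightarrow 0 \]
together with hypothesis (2) (which gives $R^{1}f_{*}\mathcal{O}_A(-nA) = 0$ for $n \geq 1$) yield inductively $R^{1}f_{*}\mathcal{O}_{A_n} = 0$ and surjectivity of $f_{*}\mathcal{O}_{A_n} \twoheadrightarrow f_{*}\mathcal{O}_{A_{n-1}}$. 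By Grauert's direct image theorem each $f_{*}\mathcal{O}_{A_n}$ is coherent on $A'$, so the inverse limit $\mathfrak{A} := \varprojlim f_{*}\mathcal{O}_{A_n}$ is a well-defined formal sheaf of rings on $A'$, and this is exactly what $F_{*}\mathcal{S}_{X,A,f}$ computes along $A'$ after passing to the formal completion of $X$ along $A$.

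The core of the argument is to produce, locally over a small neighborhood of $y \in A'$, enough \emph{convergent} sections of $\mathcal{S}_{X,A,f}$ to embed $(X',F_{*}\mathcal{S}_{X,A,f})$ into an analytic model space. Here hypothesis (1) is essential: $f$-ampleness of $\mathcal{O}_A(-A)$ provides both the finite-generation needed to pick a system of local generators $\bar g_1,\ldots,\bar g_N \in \mathfrak{A}$ whose images separate fibers and generate the maximal ideal modulo squares, and the strict negativity of the conormal class that drives convergence estimates. Each $\bar g_i$ is expanded as a formal series along $A$, and the $f$-ampleness of $\mathcal{O}_A(-A)$ gives a geometric Banach-space bound on its successive jets (this is the classical Grauert estimate adapted from the exceptional-set case to the relative situation over $A'$). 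The series therefore converges to a genuine section in some analytic neighborhood of the fiber, lying in $\mathcal{S}_{X,A,f}$ by construction.

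Assembling these convergent sections gives a closed holomorphic embedding of a neighborhood of $y$ in $X'$ into a polydisc, exhibiting $X'$ as a reduced complex analytic space whose structure sheaf is, by the way it was produced, equal to $F_{*}\mathcal{S}_{X,A,f}$; away from $A'$ the map $F$ is the identity and the conclusion is automatic. The main obstacle is precisely the algebraization/convergence step: checking that the formal sections one constructs from the vanishing in (2) actually converge in a neighborhood of each fiber of $f$. This is where condition (1) (not merely formal but analytic negativity of the conormal sheaf) is indispensable, and it is the place where the proof of \cite[Proposition 7.4]{HP16} via \cite{AT84} breaks down and must be replaced by Fujiki's more careful formulation.
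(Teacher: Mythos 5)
This statement is not proved in the paper at all: it is quoted verbatim as \cite[Theorem 2]{Fuj74}, and the authors use it as a black box precisely because the alternative route through \cite{AT84} is believed to be flawed. So the only question is whether your sketch would stand on its own as a proof of Fujiki's theorem, and it would not: the entire analytic content of the theorem is concentrated in the step you describe in one sentence as ``the classical Grauert estimate adapted from the exceptional-set case to the relative situation over $A'$.'' Everything before that point --- the pushout topology $X'=X\sqcup_A A'$, the candidate structure sheaf $F_*\mathcal S_{X,A,f}$, the exact sequences $0\to\mathcal O_A(-nA)\to\mathcal O_{A_n}\to\mathcal O_{A_{n-1}}\to 0$, the surjectivity of $f_*\mathcal O_{A_n}\to f_*\mathcal O_{A_{n-1}}$ from hypothesis (2), and coherence of $f_*\mathcal O_{A_n}$ by Grauert's direct image theorem --- is routine formal bookkeeping. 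The theorem lives or dies with the passage from the formal object $\varprojlim f_*\mathcal O_{A_n}$ to honest convergent sections of $\mathcal S_{X,A,f}$ near a fiber, together with the verification that the resulting locally ringed space is an analytic space with $F_*\mathcal S_{X,A,f}$ as structure sheaf. That relative convergence estimate is exactly the delicate point on which the literature this paper is correcting went wrong, so asserting it by analogy with Grauert's point-contraction case is not a proof; you would need to actually set up the Banach-space norms on the jets over a Stein neighborhood in $A'$ and prove the power-series majorization using $f$-ampleness of $\mathcal O_A(-A)$, or reduce to a precise relative contraction criterion already in the literature.

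Two smaller inaccuracies: hypothesis (2) controls $R^1f_*\mathcal O_A(-kA)$ only for $k>0$, so your inductive claim that $R^1f_*\mathcal O_{A_n}=0$ is unjustified (the $n=0$ term $R^1f_*\mathcal O_A$ is not assumed to vanish); fortunately only the surjectivity $f_*\mathcal O_{A_n}\twoheadrightarrow f_*\mathcal O_{A_{n-1}}$ is needed, and that does follow. Also, the identification of $F_*\mathcal S_{X,A,f}$ along $A'$ with the formal limit $\varprojlim f_*\mathcal O_{A_n}$ is itself a comparison statement (germs of functions near a fiber, constant along the fibers of $f$, versus their formal expansions along $A$) that needs an argument rather than the phrase ``this is exactly what $F_*\mathcal S_{X,A,f}$ computes''; injectivity is easy, but surjectivity is again the convergence problem.
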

The proof of \cite[Proposition 7.4]{HP16} begins by checking that $R^1f_* \OO _S(-km'S)=0$ for $k>0$, and given $\mathcal I =\OO _X(-m'S)$ it is claimed that 
\[R^1f_* (\mathcal I^k/\mathcal I^{k+1})\cong R^1f_*\OO _S(-km'S)\] however the right hand side should presumably be replaced by $R^1f_*\OO _{kS}(-km'S)$ (note that the prime divisor $S$ is not assumed to be Cartier in \cite{HP16}, while \cite[Theorem 2]{Fuj74} requires Cartier divisors).
When $\dim A'=0$, one can however deduce the following result of Grauert \cite{Gra62}.

\begin{lemma}\label{l-cont}
Let $X$ be a normal analytic variety and $S\subset X$ an effective $\mbZ$-divisor such that $\Supp(S)$ is compact and has $k$ connected components.  Assume that $S$ is $\mbQ$-Cartier of Cartier index $m>0$ such that $\mcO_S(-mS|_S)$ is ample. Then there exists a proper bimeromorphic morphism $F:X\to Y$ to a normal analytic variety $Y$ such that $F(S)=\{p_1,\ldots, p_k\}\subset Y$ are points and $F|_{X\setminus S}:X\setminus S\to Y\setminus \{p_1,\ldots, p_k\}$ is an isomorphism. 
\end{lemma}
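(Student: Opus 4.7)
The plan is to apply Theorem \ref{t-fuj74} to the Cartier divisor $A := m'S$ for a sufficiently large multiple $m'$ of $m$, with $f : A \to A' := \{p_0\}$ the constant map to a point. Because the target of $f$ is a point, condition (1) of that theorem reduces to absolute ampleness of $\mathcal{O}_A(-A)$ on $A$ and condition (2) reduces to the cohomology vanishing $H^1(A, \mathcal{O}_A(-kA)) = 0$ for every $k > 0$. The theorem will then produce a proper bimeromorphic morphism $F : X \to Y$ with $F(A) = \{p_0\}$, and the short exact sequence defining $\mathcal{S}_{X,A,f}$ shows that $F$ is an isomorphism over $Y \setminus \{p_0\}$; since $A$ and $S$ have identical topological support, this yields the desired contraction of $S$.

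Condition (1) is immediate. Setting $L := \mathcal{O}_S(-mS|_S)$, which is ample by hypothesis (so in particular $S$ is projective), the restriction of $\mathcal{O}_{m'S}(-m'S)$ to the reduction $S$ is $L^{\otimes m'/m}$. Since ampleness of a line bundle on a compact analytic space is detected by its restriction to the reduction, $\mathcal{O}_{m'S}(-m'S)$ is ample.

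Condition (2) is the technical heart of the argument and is what forces the passage to a multiple of $m$. First I would apply Serre vanishing on the projective variety $S$ to the finitely many coherent $\mathcal{O}_S$-modules arising as the successive quotients $\mathcal{F}_i := \mathcal{N}^i/\mathcal{N}^{i+1}$ of the filtration of $\mathcal{O}_{mS}$ by powers of its nilpotent ideal sheaf $\mathcal{N}$. Since $\mathcal{O}_{mS}(-n\,mS)$ is a line bundle on $mS$ restricting to $L^n$ on $S$, tensoring the filtration yields graded pieces $\mathcal{F}_i \otimes L^n$, and for $n$ large enough all of these have vanishing $H^1$. Thus there exists $k_0 \geq 1$ such that
\[
H^1(mS,\, \mathcal{O}_{mS}(-n\,mS)) = 0 \quad \text{for every } n \geq k_0.
\]
I then set $m' := k_0 m$, so that $m'S = k_0(mS)$. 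For each $j = 0, 1, \ldots, k_0 - 1$ the short exact sequence
\[
0 \to \mathcal{O}_{mS}(-j\,mS) \to \mathcal{O}_{(j+1)mS} \to \mathcal{O}_{jmS} \to 0
\]
may be tensored by $\mathcal{O}(-nk_0\,mS)$ for $n \geq 1$, and in the resulting long exact sequence the leftmost $H^1$ vanishes because $j + nk_0 \geq k_0$. A short induction on $j$ then yields $H^1(m'S, \mathcal{O}_{m'S}(-n\,m'S)) = 0$ for every $n \geq 1$, which is condition (2).

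Once (1) and (2) are established, Theorem \ref{t-fuj74} delivers the blowing-down $F : X \to Y$ with the required properties. Since $X$ is normal and the fibre of $F$ over $p_0$ is connected (being set-theoretically equal to the prime divisor $S$), replacing $Y$ by its normalization (equivalently, taking the Stein factorization of $F$) yields a normal $Y$ without disturbing any of the asserted properties. The principal obstacle in the plan is arranging the vanishing in condition (2) for \emph{every} $k \geq 1$ rather than only for $k$ sufficiently large, which is precisely what is resolved by passing from $m$ to $m' = k_0 m$ and running the inductive argument above.
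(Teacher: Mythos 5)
Your proposal is correct and follows essentially the same route as the paper: pass to the multiple $m'=k_0m$ so that Serre vanishing plus the thickening exact sequences give $H^1(m'S,\mathcal O_{m'S}(-k\,m'S))=0$ for all $k\geq 1$, then apply Theorem \ref{t-fuj74} to $A=m'S$ mapped to a point. The only cosmetic differences are that you obtain the initial Serre vanishing via the nilpotent filtration on the reduced $S$ rather than directly on the projective scheme $mS$, and you spell out the normalization of $Y$, which the paper leaves implicit.
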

\begin{proof}
Since $\mcO_S(-mS|_S)$ is ample, from \cite[Proposition 1.2.16(i)]{Laz04a} it follows that  $\mcO_{mS}(-mS)$ is ample. Thus, by Serre's vanishing theorem, there is a positive integer $k_0>0$ such that $H^i(mS, \OO _{mS}(-kmS))=0$ for all $i>0$ and $k\geq k_0$.
     Then from the short exact sequences (for $j\in \mbZ^{\>0}$)
    \[0\to \OO _{mS}(-(k+j)mS)\to \OO _{(j+1)mS}(-kmS)\to \OO _{jmS}(-kmS) \to 0\]
    it follows that $H^i((j+1)mS, \OO _{(j+1)mS}(-kmS))=0$ for all $i>0$, $j> 0$ and $k\geq k_0$.
    In particular, $H^i(k_0mS, \OO _{k_0mS}(-jk_0mS))=0$ for all $i,j>0$.
    Replacing $m$ by $k_0m$, we may assume that $H^i(mS, \OO _{mS}(-jmS))=0$ for all $i,j>0$.
    %$H^0(\OO _{k_1mS} )\to H^0(\OO _{k_2mS} )$ is surjective for all $k_1>k_2\geq k_0$. Replacing $m$ by $mk_0$ we have  that $H^0(\OO _{k_1mS} )\to H^0(\OO _{k_2mS} )$ is surjective for all $k_1>k_2\geq 1$.
    
    Suppose $A:=mS$. Then $A$ is Cartier and $\mcO_A(-A|_A)$ is ample since $\mcO_S(-mS|_S)$ ample (see \cite[Proposition 1.2.16(i)]{Laz04a}). Note that $A$ is a projective scheme (of finite type) over $\mbC$ with $k$   connected components. Let $f:A\to \Spec \mbC$ be the structure morphism. Then we have $R^1f_*\OO _A(-kA)=H^1(mS, \OO _{mS}(-kmS))=0$ for all $k>0$, and thus by Theorem \ref{t-fuj74} there is a blowing down $F:X\to X'$ such that $F(A)=\{p_1,\ldots, p_k\}\subset X'$ and $F$ satisfies the other required properties. Since $\Supp(S)=\Supp (A)$, we are done.
    
    % and $f:A\to A':={\rm Spec }H^0(\OO _{mS})$, then $A$ is Cartier, $-A|_A$ is ample and $R^1f_*\OO _A(-kA)=H^i(\OO _{mS}(-kmS))=0$ for all $k>0$. By Theorem  \ref{t-fuj74}  there exists a blowing down $F:X\to X'$, where $F|_A=f$.    
    % Finally, note that $H^0(\OO _{mS})$ is Artinian and in fact a finite dimensional vector space containing $\C\cong H^0(\OO _{S})$ and so the composition $S\to A\to A'$ factors through the map $S\to {\rm Spec } (\C)$ and the inclusion of $0$-dimensional spaces ${\rm Spec } (\C)\to A'$.
   
\end{proof}

As a corollary we will show below that if $\alpha$ is a nef $(1,1)$ class and $S:=\Null(\alpha)$ is an irreducible subvariety of codimension 1 such that $\alpha|_{S^\nu}\num 0$, where $S^\nu\to S$ is the normalization, then we can contract $S$.

%As a corollary to Lemma \ref{l-cont} we prove the existence of 3-fold divisorial contractions to a point.
\begin{corollary}\label{c-1}
Let $X$ be a normal $\Q$-factorial compact K\"ahler variety and 
$\alpha\in H^{1,1}_{\rm BC}(X)$ a nef and big class such that the following hold:
\begin{enumerate}
\item $\alpha ^\perp$ defines an extremal ray $R$ in $\overline{\rm NA}(X)$, 
\item %$R$ contains a curve and all 
the curves in $R$ cover a prime Weil divisor $S\subset X$, and 
\item $\alpha |_{S^\nu}\equiv 0$, where $S^\nu\to S$ is the normalization morphism. 
\end{enumerate}
Then there exists a proper bimeromorphic contraction  $f:X\to Y$ which contracts $S$ to a point and is the identity morphism on the complement of $S$.

%\hl{Note that if $\dim X=3$ and $\alpha =K_X+B+\beta +\omega$ where $(X,B+\beta)$ is a generalized lc pair and $\omega$ is K\"ahler, then $S$ is Moishezon}\footnote{Om: I believe this is a circular statement. I explained it in the proof. CH: You can not trust anything these guys say; even if it's correct it can be stated in a misleading way. They do not prove that if the nef reduction map is trivial, then $S$ is covered by $\alpha$-trivial curves; there could be no such curves, and then $\alpha $ is numerically trivial (in the algebraic sense, which is an empty statement as there are no curves!). At any rate, it is always the case that $-S|_S$ is ample and hence $S$ is projective! See the corrected proof below.}.
\end{corollary}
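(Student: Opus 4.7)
The plan is to reduce to Lemma \ref{l-cont} by verifying that $\mcO_S(-mS|_S)$ is ample for a sufficiently divisible positive integer $m$. Using that $\alpha$ is big on the compact K\"ahler variety $X$, I would first write $\alpha \equiv \omega + aS + E'$ where $\omega$ is a K\"ahler class on $X$, $a \geq 0$, and $E'$ is an effective $\mbR$-divisor (possibly with a modified-nef remainder absorbed in) whose divisorial part does not contain $S$. Such a decomposition comes from the Boucksom--Zariski decomposition of $\alpha - \epsilon\omega_0$ for a small K\"ahler perturbation $\omega_0$, followed by extraction of the $S$-component of the negative part. Restricting to $S^\nu$ and using (3), the identity $0 \equiv \omega|_{S^\nu} + aS|_{S^\nu} + E'|_{S^\nu}$ forces $a > 0$, since $\omega|_{S^\nu}$ is K\"ahler and $E'|_{S^\nu}$ is nonnegative, so their sum cannot vanish without a negative $S|_{S^\nu}$ contribution. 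Setting $L := -S|_{S^\nu}$, we obtain $aL \equiv \omega|_{S^\nu} + E'|_{S^\nu}$, so $L$ is big on $S^\nu$.

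To show $L \cdot \tilde C > 0$ for every curve $\tilde C \subset S^\nu$, I would use that every curve in $S$ has class in $R$ (since $\alpha|_{S^\nu} \equiv 0$), and pick a curve $C^* \subset S$ with $[C^*] \in R$ and $C^* \not\subset \Supp E'$; such $C^*$ exists because $R$ covers $S$ by (2) while $\Supp E' \cap S$ is a proper subset of $S$. Then $0 = \alpha \cdot C^* = \omega \cdot C^* + a(S \cdot C^*) + E' \cdot C^*$ with $\omega \cdot C^* > 0$ and $E' \cdot C^* \geq 0$ yields $S \cdot C^* < 0$; extremality of $R$ propagates this to $S \cdot C < 0$ for every curve $C \subset S$, hence $L \cdot \tilde C > 0$. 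Since $L$ is big, $S^\nu$ is Moishezon, and being in Fujiki's class $\mathcal{C}$ it is in fact projective.

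To conclude ampleness of $L$ on the projective variety $S^\nu$ via the Nakai--Moishezon criterion, one needs $L^k \cdot V > 0$ for every irreducible subvariety $V \subset S^\nu$ of positive dimension $k$. The cases $k = 1$ and $V = S^\nu$ have been handled. For intermediate $V$ with $V \not\subset \Supp(E'|_{S^\nu})$, the restriction $aL|_V$ is represented by the K\"ahler current $\omega|_V + [E'|_V]$ on $V$, whose top self-intersection is strictly positive, giving $(aL)^k \cdot V > 0$. The main obstacle is the remaining case $V \subset \Supp(E'|_{S^\nu})$, which I would handle by varying the decomposition of $\alpha$ (perturbing the K\"ahler contribution, which shifts the effective part $E'$) so as to move $V$ off the effective support; note that in the principal $3$-fold application of the corollary $\dim S^\nu = 2$ and no intermediate case arises, so Nakai--Moishezon on a surface reduces exactly to the curve and top-dimensional positivity already shown. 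Once ampleness of $\mcO_S(-mS|_S)$ is established, Lemma \ref{l-cont} produces the desired contraction $f: X \to Y$.
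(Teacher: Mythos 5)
Your overall strategy (verify that $\mcO_S(-mS|_S)$ is ample and invoke Lemma \ref{l-cont}) is the same as the paper's, but the route you take to ampleness has a genuine gap, and it is exactly the step you flag yourself. The corollary is stated in arbitrary dimension, so you cannot dismiss the Nakai--Moishezon case of intermediate-dimensional $V\subset \Supp(E'|_{S^\nu})$ as irrelevant, and the proposed remedy of ``varying the decomposition to move $V$ off the effective support'' does not work: the divisorial components of the negative part of the Boucksom--Zariski decomposition of $\alpha-\epsilon\omega_0$ are intrinsic to the class (they are precisely the prime divisors along which the minimal multiplicity is positive), so perturbing the K\"ahler contribution cannot move their support away from a given $V$. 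There are also smaller unjustified points in the same vein: since your $E'$ absorbs a modified nef remainder, it is not an effective class, so ``$E'|_{S^\nu}$ is nonnegative'' and ``$E'\cdot C^*\geq 0$'' need an argument (the restriction of a modified nef class to $S^\nu$ is only pseudo-effective, so you would need $C^*$ to be a general member of the covering family, not merely a curve avoiding the divisorial support); and the passage from ``$S^\nu$ Moishezon in class $\mcC$'' to ``$S^\nu$ projective'' plus Nakai--Moishezon is extra machinery the problem does not require.

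The paper avoids all of this by exploiting extremality of $R$ \emph{before} restricting to $S$. Writing $\alpha=\omega+Z+\beta$ with $\omega$ K\"ahler, $Z$ the (effective, divisorial) negative part and $\beta$ modified nef, one first shows $S\subset\Supp(Z)$ (otherwise $\alpha|_{S^\nu}$ would be big), and from $0=\alpha\cdot R$ that some $T\in\{\beta\}\cup\{\text{components of }Z\}$ has $T\cdot R<0$. Because $R$ is extremal, the standard argument (as in the proof of \cite[Claim 3.25]{DHY23}) shows $\alpha-\delta T$ is positive on $\overline{\rm NA}(X)\setminus\{0\}$ for small $\delta>0$, hence K\"ahler on $X$; restricting and using $\alpha|_{S^\nu}\equiv 0$ gives that $-\delta T|_{S^\nu}$ is K\"ahler, which forces $T$ to be a component of $Z$ (since $\beta|_{S^\nu}$ is pseudo-effective) and in fact $T=S$ (otherwise $-T|_{S^\nu}$ is anti-effective). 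Thus $-S|_S$ is K\"ahler, so $\mcO_S(-mS|_S)$ is ample with no Nakai--Moishezon verification and no projectivity discussion, in every dimension, and Lemma \ref{l-cont} applies. I recommend you replace the ``restrict first, then prove ampleness on $S^\nu$'' part of your argument by this global positivity step: once you know $S\cdot R<0$ (which your argument does essentially establish), extremality of $R$ hands you the K\"ahlerness of $-\delta S|_{S^\nu}$ directly.
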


\begin{proof}
Since $\alpha$ is big, we may write $\alpha=\gamma +\omega$, where $\omega $ is a K\"ahler class and $\gamma$ is a big class. Let $\gamma=D+\beta$ be the Boucksom-Zariski decomposition of $\gamma$ as defined above, where $D$ is an $\mbR$-divisor and $\beta$ is a modified nef class (see Lemma \ref{lem:positive-part}). Then $\beta |_{S^\nu}$ is pseudo-effective.  If $S$ is not contained in the support of $D$, then $\alpha |_{S^\nu} \equiv (D+\beta)|_{S^\nu}+\omega|_{S^\nu}$ is big, which is impossible as $ \alpha |_{S^\nu} \equiv 0$.
Thus $S$ is  contained in the support of $D$. We have $0= \alpha \cdot R = (D+\beta+\omega)\cdot R$ and so $T\cdot R<0$, where $T=\beta$ or $T$ is a component  of $D$. Since $\beta$ is modified nef, $\beta|_{S^\nu}$ is pseudo-effective and so $T$ is a component  of $D$.  
Note that $\NA(X)=\NA(X)_{T\>0}+\NA(X)_{T<0}$, so from a standard argument (e.g. see the proof of \cite[Claim 3.25]{DHY23}) it follows that $\alpha -\delta T$ is positive on $\overline{\rm NA}(X)\setminus\{0\}$ for some $\delta>0$, and hence K\"ahler. But then $(\alpha -\delta T)|_{S^\nu}\equiv -\delta T|_{S^\nu}$ K\"ahler. 
Thus $T=S$, as otherwise $-T|_{S^\nu}$ is an anti-effective class on $S^\nu$ and thus cannot be K\"ahler. In particular,  $-S|_S$ is K\"ahler, and hence $\mcO_S(-mS|_S)$ is an ample line bundle, where $m$ is the Cartier index of $S$ on $X$. Then by Lemma \ref{l-cont} there is a bimeromorphic contraction $f:X\to Y$ such that $f(S)=\{p_0\}$ is a point and $f|_{X\setminus S}: X\setminus S\to Y\setminus \{p_0\}$ is an isomorphism.
%Suppose now that $\dim X=3$ and $\alpha =K_X+B+\beta +\omega$ where $(X,B+\beta)$ is a generalized lc pair, and $\omega$ is K\"ahler.
% Let $(1-s)={\rm mult}_S(B)$ and \[K_{ S^\nu }+\Delta _{S^\nu}+\beta _{S^\nu}=(K_X+B+sS+\beta )|_{S^\nu}\equiv (-\omega  +sS)|_{S^\nu}.\] By adjunction $\Delta _{S^\nu}\geq 0$ and $\beta _{S^\nu}$ is (generalized) nef, thus $K_{S^\nu}$ is negative on curves in $R$ and hence it is not pseudo-effective. But then $\kappa (K_{S'})<0$ for any resolution of $S$. It follows that $S'$ is projective and so $S^\nu$ is Moishezon.
\end{proof}

%In the following we establish the existence of flipping contraction in arbitrary dimensions under certain hypothesis, see Proposition \ref{p-1}. 

A second consequence of Lemma \ref{l-cont} is the existence of flipping contractions for terminal 3-folds (or more generally log canonical 3-folds with isolated rational singularities).

\begin{proposition}\label{p-1}
Let $X$ be a normal compact K\"ahler variety of arbitrary dimension with isolated rational singularities and $\alpha$ a nef and big $(1,1)$ class on $X$. If  $\Null(\alpha)$ is a pure $1$-dimensional analytic subset of $X$, then there is a proper bimeromorphic morphism $f:X\to Y$ such that $\Ex(f)=\Null(\alpha)$ and $f(\Null(\alpha))$ is a finite set of points.
%Suppose that $\alpha$ is nef and big and  $X$ has only isolated singularities. If $\dim {\rm Null}(\alpha)\leq 1$ then there is a contraction morphism $X\to Y$ which contracts $\Null(\alpha)$ to a point and is an isomorphism on the complement of ${\rm Null}(\alpha)$.
\end{proposition}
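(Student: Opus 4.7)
Since $Z := \Null(\alpha)$ is pure $1$-dimensional and compact, it has finitely many connected components; the desired contraction can be built component by component and patched together, as the morphism in question is the identity outside $Z$. Hence assume $Z$ is connected, and seek a proper bimeromorphic $f:X\to Y$ with $\Ex(f) = Z$ and $f(Z) = \{p_0\}$. The strategy is to reduce to the divisorial contraction of Lemma \ref{l-cont}.

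Choose a proper bimeromorphic $\pi : X'\to X$ from a compact K\"ahler manifold such that $\pi$ is an isomorphism over $X\setminus(Z\cup X_{\mathrm{sing}})$ and $E := \pi^{-1}(Z)$ is a prime divisor on $X'$---for instance, blow up $X$ along $Z$ and then desingularize, using that the blow-up of a K\"ahler variety along a closed subspace is K\"ahler. Set $\alpha' := \pi^*\alpha$: it is nef (Lemma \ref{lem:pullback-nef}) and big, and by Lemma \ref{lem:null-locus}
\[
\Null(\alpha') \;=\; \pi^{-1}(Z)\cup \Ex(\pi) \;=\; E\cup F,
\]
where $F$ denotes the (finitely many) isolated exceptional fibres of $\pi$ over $X_{\mathrm{sing}}\setminus Z$. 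For every curve $C\subset E$ one has $\pi(C)\subset Z$, hence $\alpha'\cdot C = \alpha\cdot \pi_*C = 0$; thus $\alpha'|_{E^\nu}\equiv 0$.

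Applying Corollary \ref{c-1} to $\alpha'$ with $S = E$ yields a proper bimeromorphic $\tilde f : X'\to Y$ contracting $E$ to a single point and being an isomorphism elsewhere---together with an iterated application of Corollary \ref{c-1} to collapse each prime component of $F$ to its own point (one processes the components of $E\cup F$ by extremal-ray contractions one at a time, updating the class at each step). Every fibre of $\pi$ is either a single point or a connected analytic subset of $E\cup F$, so $\tilde f$ factors uniquely as $\tilde f = f\circ \pi$ for a proper bimeromorphic $f : X\to Y$. Because $\pi$ is an isomorphism outside $Z\cup X_{\mathrm{sing}}$, and for each $p\in X_{\mathrm{sing}}\setminus Z$ the two contractions $\pi$ and $\tilde f$ collapse the same fibre $\pi^{-1}(p)$ to a point, the induced $f$ is a local isomorphism at every point of $X\setminus Z$. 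Thus $\Ex(f) = Z$ and $f(Z)$ is a single point, as required.

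\textbf{Main obstacle.} The crux is applying Corollary \ref{c-1} to the prime divisor $E\cong \mathbb P(N_Z^\vee)$, a $\mathbb P^{n-2}$-bundle over (the smooth locus of) $Z$. Even though $\alpha'\cdot C = 0$ for every curve $C\subset E$, the cone $\overline{\mathrm{NA}}(X')$ contains multiple numerical classes of curves on $E$ (fibres and various sections), so the condition that $\alpha'^{\perp}$ define a single extremal ray must be checked; equivalently, one must verify that $\mathcal O_E(-mE|_E)$ is ample, i.e.\ that the conormal bundle $N_Z^\vee$ is ample on $Z$. This ampleness should follow from the nefness and bigness of $\alpha$ together with $\alpha|_{Z^\nu}\equiv 0$, but its verification is the non-trivial heart of the argument; an alternative route is to prove directly a multi-component analogue of Lemma \ref{l-cont} that contracts each connected component of $E\cup F$ simultaneously by Theorem \ref{t-fuj74}, using the cohomology vanishings implied by the rationality of the singularities of $X$.
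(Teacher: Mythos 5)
There is a genuine gap, and you have located it yourself: the ampleness of $-E|_{E}$ (equivalently, the hypotheses of Corollary \ref{c-1}, or the negativity needed to invoke a Grauert/Fujiki-type contraction) is never established, and this is not a technical verification that "should follow" from $\alpha$ nef and big with $\alpha|_{Z^\nu}\equiv 0$ --- it is the entire content of the proposition. The paper's proof exists precisely to manufacture this negativity: one passes to a resolution, uses \cite[Theorem 1.1]{CT15} together with Lemmas \ref{lem:non-kahler-locus} and \ref{lem:null-locus} to identify $E_{nK}(\nu^*\alpha)=\Null(\nu^*\alpha)=\Ex(\nu)\cup\nu^{-1}(\Null(\alpha))$, then takes a K\"ahler current with analytic singularities in $\nu^*\alpha$ whose singular locus is exactly this set (\cite[Theorem 3.17(ii)]{Bou04}), and resolves it to write $(\nu\circ\mu)^*\alpha\equiv\omega''+G$ with $\omega''$ K\"ahler and $G\geq 0$ exceptional. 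Since $\alpha$ is numerically trivial on each (curve) component of $\Null(\alpha)$, restricting this decomposition to the divisors lying over $\Null(\alpha)$ gives $-G|_{D}\equiv\omega''|_{D}$ ample, and the components of $G$ not lying over $\Null(\alpha)$ sit over the finitely many isolated singular points, so $-G|_{\Supp(G)}$ is ample after a small perturbation; the contraction of $\Supp(G)$ and the descent to $X$ by the rigidity lemma then follow. Your proposal skips exactly the current-theoretic step that produces $\omega''+G$, so the "main obstacle" you flag is not an obstacle you have reduced the problem to --- it is the theorem.

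There are also structural problems with the reduction as you set it up. You cannot in general arrange a smooth (K\"ahler) $X'$ with $\pi^{-1}(Z)$ a \emph{prime} divisor: resolving the blow-up of $X$ along $Z$ introduces further exceptional divisors over the singular points of $Z$ and over $Z\cap X_{\mathrm{sing}}$, so the relevant exceptional set is reducible, and Corollary \ref{c-1} (which takes a single prime divisor $S$ covered by curves of one extremal ray) does not apply; moreover hypothesis (1) of Corollary \ref{c-1} --- that $\alpha'^{\perp}\cap\overline{\mathrm{NA}}(X')$ is a single ray --- genuinely fails on $X'$, since $\Null(\alpha')$ carries several independent curve classes (fibres, sections, curves in the extra exceptional components), not merely "must be checked". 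Your proposed fix of applying Corollary \ref{c-1} "one component at a time, updating the class" is unsupported: after the first contraction the ambient space need not remain K\"ahler or $\mathbb{Q}$-factorial and you do not say what the updated class is or why it stays nef and big with the right null locus. The correct multi-component statement is obtained, as in the paper, by contracting all of $\Supp(G)$ at once using the ampleness of $-G|_{\Supp(G)}$ (rationality of the isolated singularities alone does not supply the vanishing $R^1\pi_{A,*}\mathcal O_A(-kA)=0$ needed for Theorem \ref{t-fuj74}; that vanishing again comes from the negativity).
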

\begin{proof}
   %We will follow the arguments in the proof of \cite[Theorem 7.12]{HP16}.
   Let $\nu :X'\to X$ be a log resolution of $X$ and $\Null(\alpha)$, which is an isomorphism on the complement of $X_\textsubscript{sing}\cup {\rm Null}(\alpha)$. Then $\nu ^*\alpha $ is also nef and big and hence, by \cite[Theorem 1.1]{CT15}  and Lemma \ref{lem:null-locus} we have
   \begin{equation}\label{eqn:null-non-kahler}
       E_{nK}(\nu^*\alpha )=\Null(\nu ^*\alpha)=\Ex(\nu)\cup \nu^{-1}(\Null(\alpha)).
   \end{equation}
 Since $E_{nK}(\alpha )\subset \nu (E_{nK}(\nu ^*\alpha ))$ by Lemma \ref{lem:non-kahler-locus}, from eqn. \eqref{eqn:null-non-kahler} above it follows that $E_{nK}(\alpha)$ doesn't contain any divisor of $X$. By Demailly's regularization theorem (see \cite[Theorem 3.17(ii)]{Bou04}), there exists a K\"ahler current $T$ in $\nu ^*\alpha$ with analytic singularities such that
$E_+(T ) = E_{nK} (\nu ^* \alpha )$. Let $\mu :X''\to X'$ be a resolution of the singularities of $T$ so that $\mu ^*T=\phi +F$, where $F\>0$ is an effective $\R$-divisor and $\phi$ is a smooth form (see \cite[Definition 2.5.1]{Bou04}); note that $F$ is $\nu\circ\mu$-exceptional.
% \footnote{Om: This doesn't seem to follow without the equality $E_{nK}=\nu(E_{nK}(\nu^*\alpha)$, and without $D$ being exceptional rest of our argument will not work!}
Let $\omega'$ be a K\"ahler form on $X'$ such that $T\geq \ve\omega'$ for some $\ve>0$. Then $\phi\geq \ve\mu^*\omega'$, and thus by \cite[Lemma 2.9]{Bou02ar} we see that there is an effective $\mu$-exceptional $\mbR$-divisor $E\>0$ such that $\phi -\delta E$ is cohomologous to a K\"ahler form for $0<\delta \ll 1$. Then we can write $(\nu \circ \mu) ^*\alpha \equiv \omega '' +G$, where $\omega ''$ is a K\"ahler class and $G\geq 0$ is effective $\mbQ$-divisor such that $\Supp(G)=\Ex(\nu\circ\mu)$.
Let $C^{n}\to C$ be the normalization of $C$, a connected component of $\mathcal N:={\rm Null}(\alpha)$. Then $\alpha |_{C^n}\equiv 0$ (since $C$ is a curve) and so if $D:=(\nu \circ \mu) ^{-1}(C)$, then $D$ is a reduced effective divisor contained in the support of $G$ and
$-G|_D\equiv \omega ''|_D$ is ample. Now from our construction above it follows that $(\nu\circ\mu)(\Ex(\nu\circ\mu))=X_{\sing}\cup\Null(\alpha)$. Thus the components of $G$ which do not map into $\Null(\alpha)$ are contracted to (the singular) points of $X$, and hence $-G|_{{\rm Supp}(G)}$ is ample. So by Lemma \ref{l-cont} there is a contraction $X''\to Y$ which contracts ${\rm Supp}(G)$ to a finite set of points and is an isomorphism on the complement of ${\rm Supp}(G)$. Then by the rigidity lemma (see \cite[Lemma 4.1.13]{BS95}), we obtain the required morphism $f:X\to Y$.
%Since $C$ is a connected component of ${\rm Null}(\alpha)$, recalling our assumptions on the singular locus of $X$, it follows that the divisor $D$ is a connected component of $G$. We then write $kG'=\sum m_iD_i+R$ where $kG'$ is integral (for some $k\in \mathbb N$) $R$ is disjoint from $D$ and the support of $D$ and $\sum m_iD_i$ coincide. But then $-(\sum m_iD_i)|_{\sum m_iD_i}$ is ample and so there is a contraction $X''\to Y$ which contracts $\sum D_i$ to a point and is an isomorphism on the complement of $\sum D_i$. By the rigidity lemma (see eg. \cite[Lemma 4.1.13]{BS95}), \hl{there is a contraction $X\to Y$ which contracts $C$ to a point and is an isomorphism on the complement}\footnote{Om: There is a small problem here, first we need to contract the exceptional divisors of $X''\to X$ which lands in $X_{\sing}$. Fortunately, by Lemma \ref{lem:null-locus} we know that $(\nu\circ \mu)(\Ex(\nu\circ \mu))=\Null(\alpha)\cup X_{\sing}$, so there aren't any other type of exceptional divisors which need to be contracted. }.
\end{proof}

\begin{corollary}
    Let $(X,B)$ be a compact K\"ahler $3$-fold terminal pair. Let $\omega$ be a K\"ahler class on $X$ and $\alpha=K_X+B+\omega$ a nef but not K\"ahler such that $\Null(\alpha)$ is a pure $1$-dimensional analytic subset of $X$ and $\overline{\rm NA}(X)\cap \alpha ^\perp =R$ is an extremal ray. Then the flipping contraction $f:X\to Z$ and the corresponding flip $f^+:X^+\to Z$ both exist.
\end{corollary}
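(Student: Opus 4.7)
The plan is to recognize that the setup forces $\alpha$ to be nef and big, apply Proposition~\ref{p-1} to produce a bimeromorphic contraction, upgrade it to a projective $(K_X+B)$-flipping contraction using the terminal hypothesis, and then appeal to existence of flips for projective contractions in the literature.

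First I would check that $\alpha$ is big. Since $\alpha$ is nef, bigness is equivalent to $\alpha^{3}>0$; but $\alpha^{3}=0$ would place $X$ itself in $\Null(\alpha)$, contradicting the hypothesis that $\Null(\alpha)$ is purely $1$-dimensional. The terminal hypothesis then guarantees that $X$ has only isolated (terminal, hence rational) singularities and that $K_X+B$ is $\mathbb{Q}$-Cartier. Combined with $\alpha$ nef and big and $\Null(\alpha)$ pure $1$-dimensional, this places us squarely in the setting of Proposition~\ref{p-1}, which produces a proper bimeromorphic morphism $f:X\to Z$ with $\Ex(f)=\Null(\alpha)$, contracting $\Null(\alpha)$ to a finite set of points and restricting to an isomorphism on the complement.

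Next I would verify that $f$ is a projective $(K_X+B)$-flipping contraction. Since $\dim \Ex(f)=1$ while $\dim X=3$, the contraction $f$ is small. Any curve $C$ contained in an irreducible component $V$ of $\Null(\alpha)$ must coincide with $V$, so $\alpha\cdot C=\alpha\cdot V=0$ and hence $[C]\in \alpha^{\perp}\cap\overline{\rm NA}(X)=R$. Consequently $(K_X+B)\cdot C=-\omega\cdot C<0$, giving that $-(K_X+B)$ is numerically positive on every $f$-exceptional curve. Because $K_X+B$ is $\mathbb{Q}$-Cartier, a suitable positive multiple $-m(K_X+B)$ is a Cartier divisor ample on every (one-dimensional) fiber of $f$; since $f(\Ex(f))$ is a finite set of points, this fiberwise ampleness upgrades to relative ampleness by the analytic version of Grauert's criterion, so $f$ is projective. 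Together with the extremality of $R$ this realizes $f:X\to Z$ as a bona fide flipping contraction.

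Finally, given a projective flipping contraction of a klt (in fact terminal) pair, the existence of the flip $f^{+}:X^{+}\to Z$ follows in all dimensions from the relative MMP for projective morphisms established in \cite{BCHM10}, \cite{Fuj22}, and \cite{DHP22}, as recalled in the introduction. The main hurdle I anticipate is the projectivity step, since the construction in Proposition~\ref{p-1} is a priori only bimeromorphic; it is precisely the terminal ($\mathbb{Q}$-Gorenstein) hypothesis that lets one convert numerical positivity of $-(K_X+B)$ on fibral curves into a genuine $f$-ample Cartier class, after which projectivity follows because the target $f(\Ex(f))$ is zero-dimensional.
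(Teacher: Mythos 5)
Your proposal is correct and follows essentially the same route as the paper: terminality gives isolated (rational) singularities, Proposition \ref{p-1} produces the contraction of $\Null(\alpha)$ to points, and the flip then comes from the relative results of \cite{Fuj22} and \cite{DHP22} (the paper constructs it as the log canonical model locally over $Z$ and glues). Your additional verifications -- that $\alpha$ is big because $\Null(\alpha)$ is $1$-dimensional, and that $-(K_X+B)$ is relatively ample so $f$ is (locally) projective over $Z$ -- are correct elaborations of steps the paper leaves implicit, not a different argument.
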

\begin{proof}
    As $(X,B)$ is a terminal $3$-fold, $X$ has only isolated rational singularities.
    Thus Proposition \ref{p-1} applies and hence the flipping contraction $f:X\to Z$ exists.
    By \cite[Theorem 1.3]{DHP22} or \cite[Theorem 1.8]{Fuj22}, we can construct the log canonical model of $(X,B)$ locally over $Z$ and then glue these together to obtain the flip $f^+:X^+\to Z$.
\end{proof}
In order to complete the 3-fold MMP for klt pairs, it is necessary to construct divisorial contractions where $n(\alpha )=1$, i.e. divisorial contractions to a curve,  and flipping contractions in the non-terminal case.
Divisorial contractions to a curve were constructed in \cite{DH20} and flipping contractions are constructed in \cite{CHP16}. Note however that the proof of \cite{CHP16} is somewhat technical as it involves the use of ample sheaves (which are not necessarily vector bundles). In what follows we will give a unified approach to the construction of flipping contractions and divisorial contractions to a curve. This approach relies on first proving that the pl-extremal contractions exist. Our approach seems to be well suited for higher dimensional arguments and settles several important higher dimensional cases. 

%%%%%%%%%%%%%%%%%%%%%%%%%%%%%%%%%%%%%%%%%
\section{The Main Contraction Theorems}\label{sec:main-contractions}
The goal of this section is to prove the contraction Theorems \ref{t-pl} and \ref{thm:contractions}.
\subsection{ Existence of pl-contractions and flips for generalized pairs} In this subsection we will prove Theorem \ref{t-pl}. First we need some preparatory results. The following theorem is a version of the relative Kawamata-Viehweg vanishing theorem which holds for generalized pairs. 
\begin{lemma}\label{l-wdvan}
Let $(X,B+\bbeta)$ be a  gklt pair, $g:X\to T$ a projective morphism of analytic varieties, and $D$ a $\Q$-Cartier $\mbZ$-divisor such that $D-(K_X+B+\bbeta_X)$ is nef and big over $T$. Then $R^ig_* \OO _X(D)=0$ for all $i>0$.
\end{lemma}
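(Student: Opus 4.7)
The argument follows the standard strategy for Kawamata--Viehweg vanishing for generalized klt pairs: pass to a log resolution on which $\bbeta$ descends, and then invoke analytic Kawamata--Viehweg vanishing on the resulting smooth K\"ahler manifold. Choose a log resolution $f:Y\to X$ of $(X,B+\bbeta)$ with $Y$ a smooth K\"ahler manifold, $f$ projective, and $\bbeta$ descending to a nef $(1,1)$-class $\bbeta_Y$ on $Y$. By the generalized adjunction of Subsection~\ref{subsec:adjunction},
\[
f^*(K_X+B+\bbeta_X)=K_Y+B_Y+\bbeta_Y,
\]
where $B_Y$ is an SNC $\mbR$-divisor all of whose coefficients are strictly less than $1$ by the gklt assumption. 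Decompose $B_Y=B_Y^+-B_Y^-$ into effective parts with disjoint support; the negativity lemma then forces $B_Y^-$ to be $f$-exceptional.

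Next, one produces an integer Weil divisor $D_Y$ on $Y$ and an effective $\mbQ$-boundary $\Delta$ with SNC support and coefficients in $[0,1)$ such that (a) $D_Y-f^*D$ is effective and $f$-exceptional, so $f_*\mcO_Y(D_Y)=\mcO_X(D)$, and (b) $D_Y-K_Y-\Delta-\bbeta_Y=f^*A$ as $\mbR$-divisor classes, where $A:=D-(K_X+B+\bbeta_X)$ is nef and big over $T$. The natural candidate is $D_Y=\lceil f^*D+B_Y^-\rceil$, taking $f$ fine enough that $f^*D$, $B_Y$, and $\operatorname{Exc}(f)$ form an SNC divisor, with the rounding fractional parts together with $B_Y^+$ absorbed into $\Delta$; when common components cause coefficients to exceed $1$, one replaces this ad hoc choice by the uniform multiplier ideal construction, using that $\mcJ(X,B+\bbeta)=\mcO_X$ for a gklt pair.

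Finally, two applications of analytic Kawamata--Viehweg vanishing on the smooth K\"ahler manifold $Y$ complete the argument. Since $f$ is birational, $f^*A$ is $f$-numerically trivial, in particular $f$-nef and trivially $f$-big on the zero-dimensional generic fiber, so the relative Grauert--Riemenschneider / Kawamata--Viehweg vanishing gives $R^if_*\mcO_Y(D_Y)=0$ for $i>0$. Similarly $f^*A$ is nef and big over $T$ (both properties being preserved by birational pullback), so the analytic Kawamata--Viehweg vanishing in the K\"ahler setting recalled from \cite{Fuj22} and \cite{DHP22} yields $R^i(g\circ f)_*\mcO_Y(D_Y)=0$ for $i>0$. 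The Leray spectral sequence for the composition $g\circ f$, combined with the two preceding vanishings and the identity $f_*\mcO_Y(D_Y)=\mcO_X(D)$, then collapses to give
\[
R^ig_*\mcO_X(D)=R^i(g\circ f)_*\mcO_Y(D_Y)=0 \quad \text{for all } i>0.
\]
The principal technical hurdle is ensuring that the boundary $\Delta$ constructed above is genuinely klt: this is where the multiplier ideal formulation of Nadel vanishing is cleanest, and precisely where the gklt hypothesis enters.
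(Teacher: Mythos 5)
Your overall strategy coincides with the paper's: pass to a log resolution, manufacture an integral divisor upstairs of the form $K_Y+\Delta+(\text{nef and big over }T)$ whose pushforward is $\mcO_X(D)$, and finish with two applications of relative Kawamata--Viehweg vanishing and the Leray spectral sequence. The gap sits exactly at the step that carries the content of the lemma. Your candidate $D_Y=\lceil f^*D+B_Y^-\rceil$ forces $\Delta=B_Y^++\{-(f^*D+B_Y^-)\}$, whose coefficients can be $\geq 1$ on exceptional components where $B_Y^+$ and the fractional part of $f^*D$ are both nonzero; you acknowledge this, but the fallback --- an unspecified ``uniform multiplier ideal construction'' justified by $\mcJ(X,B+\bbeta)=\mcO_X$ --- is not a proof, and in this analytic, generalized-pair, $\Q$-Cartier-Weil-divisor setting no such statement is available off the shelf in the references; its verification is essentially the computation you are omitting. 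The honest fix is to subtract the integral part, i.e.\ take $D_Y=\lceil f^*D\rceil-E$ with $E=\lfloor B_Y+\{-f^*D\}\rfloor$, so the boundary becomes $\{B_Y+\{-f^*D\}\}$, which is klt. But then your condition (a) is no longer automatic: one must re-verify $f_*\mcO_Y(D_Y)=\mcO_X(D)$ for the corrected $D_Y$. This needs two things: (i) $E$ is $f$-exceptional, which uses that $D$ is an integral Weil divisor and the pair is gklt (on non-exceptional components the coefficient of $B_Y$ is $<1$ and $\{-f^*D\}$ vanishes); and (ii) the componentwise inequality $\lceil f^*D\rceil-E\geq\lfloor f^*D\rfloor$, which sandwiches $f_*\mcO_Y(D_Y)$ between two copies of $\mcO_X(D)$. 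Point (ii) is precisely Claim \ref{clm:divisor-comparison} in the paper's proof, and nothing in your argument supplies it: triviality of a multiplier ideal of $(X,B+\bbeta)$ alone does not identify the pushforward, because the relevant sheaf mixes the boundary with the fractional parts of $f^*D$.

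The remaining steps are fine and agree with the paper: $D_Y-(K_Y+\Delta)\equiv \bbeta_Y+f^*A$ is nef over $X$ (relative bigness is automatic for a bimeromorphic morphism), and it is nef and big over $T$, so the two vanishings and the collapse of the Leray spectral sequence go through, given the klt boundary and the pushforward identity. Two small remarks: $B_Y^-$ is $f$-exceptional simply because $f_*B_Y=B$ is effective --- no negativity lemma is needed --- and you should note, as the paper does, that the vanishing theorems are applied to an $\R$-divisor that is merely numerically equivalent (over the base) to the transcendental class $\bbeta_Y+f^*A$, which the cited analytic statements permit.
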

\begin{proof}
Since the question is local on the base, we may assume that $T$ is a relatively compact Stein space. Let $\nu :(X',B'+\bbeta_{X'})\to (X,B+\bbeta_X) $ be a log resolution of the gklt pair $(X,B+\bbeta)$ such that $K_{X'}+B'+\bbeta_{X'}=\nu^*(K_X+B+\bbeta_X)$. 
We have $\lceil \nu ^*D\rceil=\nu ^*D+\{-\nu^*D\}$, and we let  $E:=\lfloor B'+\{-\nu ^*D\}\rfloor $, where $\{\cdot\}$ denotes the fractional part. 

We make the following claim.
\begin{claim}\label{clm:divisor-comparison}
  $\lceil \nu ^*D\rceil -E\geq \lfloor \nu ^*D\rfloor $.  
\end{claim}
  %To see this, note that by assumption $-\lfloor B'\rfloor \geq 0$. Let $P$ be a prime divisor on $X'$. If ${\rm mult}_P(\nu ^*D)\in \Z$, then ${\rm mult}_P\{-\nu ^*D\}=0$ and so ${\rm mult}_P(-E)\geq 0$ and the claim is clear. If ${\rm mult}_P(\nu ^*D)\not \in \Z$, then ${\rm mult}_P(\lceil \nu ^*D\rceil -\lfloor \nu ^*D\rfloor =1$ and ${\rm mult}_P(-E)\geq -1$ and the claim is clear). 
\begin{proof}[Proof of Claim \ref{clm:divisor-comparison}]
Let $P$ be any prime Weil divisor on $X'$. First assume that $P$ is not contained in the support of $\{-\nu ^*D\}$. Then ${\mult}_P(-E)={\mult}_P(-\lfloor B'\rfloor )\geq 0$ and ${\mult}_P(\lceil \nu ^*D\rceil) =\mult_P(\nu^*D)={\mult}_P(\lfloor \nu ^*D\rfloor )$, and the claim follows.  Suppose now that $P$ is contained in the support of $\{-\nu ^*D\}$. Then $\mult_P(\{\nu^*D\})=(1-\mult_P(\{-\nu^*D\}))>0$, so that ${\rm mult}_P(\lceil \nu ^*D\rceil) = {\rm mult}_P(\lfloor \nu ^*D\rfloor )+1$. Since  ${\rm mult}_P(-E)={\rm mult}_P(-\lfloor B'+\{-\nu ^*D\}\rfloor )\geq -1$, the required inequality follows.  
\end{proof}

Now recall that $(X, B+\bbeta)$ is a gklt pair and $D$ is a $\mbZ$-divisor. Therefore $E=\lrd B'+\{-\nu^*D\}\rrd$  is $\nu$-exceptional, $\nu _* (\lceil \nu ^*D\rceil-E)=D$ and $\nu_*\lrd \nu^*D\rrd =D$. Since $\nu$ is bimeromorphic, the natural morphism $\nu_*\mcO_{X'}(\lru\nu^*D\rru-E)\to \mcO_X(\nu_*(\lru\nu^*D\rru-E))=\mcO_X(D)$ is injective, and $\nu_*\mcO_{X'}(\lrd\nu^*D\rrd)=\mcO_X(D)$ (see \cite[Lemma 3.2]{Das21}).  By Claim \ref{clm:divisor-comparison} it follows that
\[\OO _{X}(D)\supset \nu _* \OO _{X'}(\lceil \nu ^*D\rceil -E)\supset \nu _* \OO _{X'}(\lfloor \nu ^*D\rfloor)=\OO _X(D).\] 
Therefore we have
\begin{equation}\label{eqn:kv-isomorphism}
    \nu_*\mcO_{X'}(\lru\nu^*D\rru-E)=\mcO_X(D).
\end{equation}
 Now observe that 
 \[\lceil \nu ^*D\rceil -E= K_{X'}+B^*+\bbeta _{X'}+\nu ^*(D-(K_X+B+\bbeta_X)),\]
where $B^*:=B'+\{-\nu ^*D\}-E=\{B'+\{-\nu ^*D\}\}$. But then $(X',B^*)$ is klt and the $\R$-divisor \[\lceil \nu ^*D\rceil -E-(K_{X'}+B^*)\equiv \bbeta _{X'}+\nu ^*(D-(K_X+B+\bbeta _X))\] is  nef and big over $T$, and so, by the relative Kawamata-Viehweg vanishing theorem \cite[Theorem 3.7]{Nak87} (see also \cite[Theorem 5.2]{Fuj22}) for all $i>0$ we have 
\[R^i\nu _* \OO _{X'}(\lceil \nu ^*D\rceil -E)=0,\ \qquad R^i(g\circ \nu )_* \OO _{X'}(\lceil \nu ^*D\rceil -E)=0.\]
By a standard spectral sequence argument and eqn. \eqref{eqn:kv-isomorphism} it follows that 
\[R^ig_*\OO _X(D)=R^ig_* (\nu _* \OO _{X'}(\lceil \nu ^*D\rceil -E))=0\qquad \mbox{ for all } \ i>0.\]

%We now address the general case. We begin by constructing a $\Q$-factorialization. Let $\nu :X'\to X$ be a projective log resolution and write $K_{X'}+B'+\bbeta _{X'}=\nu ^*(K_X+B+\bbeta _X)$, then  
\end{proof}

\begin{lemma}\label{lem:normality-of-plt-center}
    Let $(X, S+B+\bbeta)$ be a gplt pair such that $\lrd S+B\rrd=S$ is irreducible. Then $S$ is normal and $(S, B_S+\bbeta_S)$ is gklt, where $K_S+B+\bbeta_S=(K_X+S+B+\bbeta)|_S$ is defined by adjunction as in Subsection \ref{subsec:adjunction}.
\end{lemma}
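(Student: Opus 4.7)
The plan is to run the standard log-resolution argument for normality of plt centers (à la Kollár--Mori Theorem~5.50) in the generalized-pair setting, using the relative Kawamata--Viehweg vanishing of Lemma~\ref{l-wdvan} as the key vanishing input.

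First, choose a log resolution $\nu\colon X'\to X$ of the gplt pair $(X,S+B+\bbeta)$ on which $\bbeta$ descends to a nef class $\bbeta_{X'}$ with $\nu_*\bbeta_{X'}=\bbeta_X$, and let $S'\subset X'$ be the strict transform of $S$. By the gplt assumption, one may decompose
\[
K_{X'}+S'+\Delta'+\bbeta_{X'}=\nu^*(K_X+S+B+\bbeta_X)+A,
\]
where $\Delta'\geq 0$ has coefficients in $[0,1)$ (so $(X',\Delta'+\bbeta_{X'})$ is gklt) and $A$ is an effective integral $\nu$-exceptional Weil divisor. By further blow-ups I may also arrange that $\nu|_{S'}$ factors through the normalization $S^\nu\to S$ and that $A|_{S'}$ is $(\nu|_{S'})$-exceptional.

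Second, push the short exact sequence
\[
0\to\mcO_{X'}(A-S')\to\mcO_{X'}(A)\to\mcO_{S'}(A|_{S'})\to 0
\]
forward by $\nu$. Since $A$ (resp.\ $A|_{S'}$) is effective and $\nu$-exceptional (resp.\ $(\nu|_{S'})$-exceptional), one gets $\nu_*\mcO_{X'}(A)=\mcO_X$ and $\nu_*\mcO_{S'}(A|_{S'})=\mcO_{S^\nu}$. To produce a surjection $\mcO_X\twoheadrightarrow\mcO_{S^\nu}$, it suffices to establish $R^1\nu_*\mcO_{X'}(A-S')=0$. Since
\[
(A-S')-(K_{X'}+\Delta'+\bbeta_{X'})=-\nu^*(K_X+S+B+\bbeta_X)
\]
is $\nu$-numerically trivial, Lemma~\ref{l-wdvan} is applied after a small $\nu$-ample perturbation of $A-S'$ (chosen to preserve its round-down) to obtain the vanishing; the resulting surjection $\mcO_X\twoheadrightarrow\mcO_{S^\nu}$ identifies $S^\nu$ with a closed subscheme of $X$ supported on $S$, forcing the finite map $S^\nu\to S$ to be an isomorphism, so $S$ is normal.

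Finally, for the gklt assertion, set $K_S+B_{S^\nu}+\bbeta_{S^\nu}:=(K_X+S+B+\bbeta_X)|_S$ as in Subsection~\ref{subsec:adjunction} and restrict the discrepancy identity of the first step to $S'$, applying the usual adjunction $(K_{X'}+S')|_{S'}=K_{S'}$. Every log discrepancy of $(S,B_{S^\nu}+\bbeta_{S^\nu})$ at a prime divisor over $S$ then equals the log discrepancy of $(X,S+B+\bbeta)$ at the corresponding divisor over $X$ meeting $S'$, which is strictly positive by gplt, so $(S,B_{S^\nu}+\bbeta_{S^\nu})$ is gklt. The main obstacle is the vanishing step: since $-\nu^*(K_X+S+B+\bbeta_X)$ is only $\nu$-numerically trivial and never $\nu$-big, Lemma~\ref{l-wdvan} does not apply verbatim; one must combine a $\nu$-ample perturbation with a torsion-freeness argument (using that $R^i\nu_*\mcO_{X'}(A-S')$ is supported on the codimension-$\geq 2$ exceptional locus of $\nu$) to deduce the vanishing, and the analogous arrangement of $A|_{S'}$ as $(\nu|_{S'})$-exceptional by iterated blow-ups must be verified compatibly with the nef class $\bbeta_{X'}$.
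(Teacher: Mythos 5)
Your route is essentially the paper's: your decomposition is the paper's $K_{X'}+S'+B'+\bbeta_{X'}=\nu^*(K_X+S+B+\bbeta_X)+E'$ with $A=\lru E'\rru$ and $\Delta'=B'+\{-E'\}$, you push forward the same short exact sequence, quote the same vanishing Lemma~\ref{l-wdvan}, and prove gklt-ness by the same discrepancy comparison on a high resolution. However, there is a genuine gap in your normality step: you assert that after further blow-ups one can arrange $A|_{S'}$ to be $(\nu|_{S'})$-exceptional, so that $\nu_*\mcO_{S'}(A|_{S'})=\mcO_{S^\nu}$. This cannot be arranged in general. Although $A$ is $\nu$-exceptional, a component $P$ of $A$ may have center a prime divisor $Q\subset S$ (which has codimension $2$ in $X$), and then a component of $P\cap S'$ can dominate $Q$, so $A|_{S'}$ is not $(\nu|_{S'})$-exceptional; moreover further blow-ups do not cure this, since the exceptional divisor of a blow-up along (a curve in) $P\cap S'$ again has positive generalized discrepancy, hence is again a component of the new $A$ and again meets the strict transform of $S'$ over $Q$. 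So the identity $\nu_*\mcO_{S'}(A|_{S'})=\mcO_{S^\nu}$ on which your surjection argument rests may fail. The repair is exactly what the paper does: one only needs that $A|_{S'}\geq 0$, which gives an inclusion $\mcO_{S^\nu}\injective (\nu|_{S'})_*\mcO_{S'}(A|_{S'})$; then the surjection $\mcO_X=\nu_*\mcO_{X'}(A)\surjective \nu_*\mcO_{S'}(A|_{S'})$ factors as $\mcO_X\to\mcO_S\injective \nu_*\mcO_{S^\nu}\injective \nu_*\mcO_{S'}(A|_{S'})$, forcing $\mcO_S\cong\nu_*\mcO_{S^\nu}$, i.e.\ $S$ is normal, with no exceptionality of $A|_{S'}$ needed.

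On the other hand, the ``main obstacle'' you single out is not one. In the vanishing step the morphism is $\nu$, which is bimeromorphic, so the general fibre is a point and every class is big over $X$; hence the $\nu$-numerically trivial class $(A-S')-(K_{X'}+\Delta'+\bbeta_{X'})\equiv_\nu 0$ is nef and big over $X$ and Lemma~\ref{l-wdvan} applies verbatim to $D=A-S'$ with boundary $\Delta'$ — this is precisely how the paper invokes it. Your proposed fix is both unnecessary and not well formed as stated: $D$ must remain an integral $\Q$-Cartier Weil divisor, so ``a small $\nu$-ample perturbation of $A-S'$ preserving its round-down'' does not make sense (one could at most perturb $\bbeta_{X'}$ or $\Delta'$, which is not needed), and the torsion-freeness remark is not a substitute for the vanishing. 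The gklt portion of your argument is correct and agrees with the paper's, provided you make explicit that for a given divisor $F$ over $S$ one passes to a sufficiently high log resolution on which $F$ is realized as $E\cap S'$ for an exceptional divisor $E$ on $X'$, so that $a(F,S,B_S+\bbeta_S)=a(E,X,S+B+\bbeta_X)>-1$ by gplt.
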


\begin{proof}
%\footnote{Om: I noticed after writing this proof that you proved a relative vanishing Lemma \ref{l-wdvan}, which will reduced the length of the following proof by removing the local computation. I will change it later.}
%    Since normality is a local property, it is enough to work locally around each point of $x\in S$. Thus replacing $X$ by a relatively compact Stein open neighborhood of a point $x\in S$ we may assume that $X$ is relative compact Stein space and there is a Stein compact subset $x\in W\subset X$ (see \cite[Renark 2.18]{DHY23}).
Since the question is local on $X$, we may assume that $X$ is a relatively compact Stein space. Let $f:X'\to X$ be a log resolution of the generalized pair $(X, B+\bbeta)$ and 
\[ 
K_{X'}+S'+B'+\bbeta_{X'}=f^*(K_X+S+B+\bbeta_X)+E'
\]
where $S'=f^{-1}_*S, B'\>0, E'\>0, f_*B'=B, f_*E'=0$ and $S'+B'$ and $E'$ do not share any common component.

Then $-S'+\lru E'\rru \num_f K_{X'}+B'+\{-E'\}+\bbeta_{X'}$ and $(X', B'+\{-E'\})$ is a klt pair. Since $[\bbeta_{X'}]\in H^{1,1}_{\BC}(X')$ is a nef over $X$,
% \footnote{Om: Is it ok to define nef class when $X'$ is not compact (since $X$ is not compact here)? The analytic definition does seem to have any issue other than the fact that it will depend on the choice of a K\"ahler form. I think we are fine here as the fibers of $f$ are compact and we only need relative compactness, but we will need to define  nefness on non compact thing first which may not be worth it as }
from Lemma \ref{l-wdvan} it follows that $R^if_*\mcO_{X'}(-S'+\lru E'\rru)=0$ for all $i>0$. Now consider the following exact sequence 
\begin{equation*}
\xymatrixcolsep{3pc}\xymatrix{
0\ar[r] & \mcO_{X'}(\lru E'\rru -S')\ar[r] & \mcO_{X'}(\lru E'\rru)\ar[r] & \mcO_{S'}(\lru E'\rru|_{S'})\ar[r] & 0.
}
\end{equation*}
Since $R^1f_*\mcO_{X'}(-S'+\lru E'\rru)=0$, from the long exact sequence of cohomology we get the surjection 
\[ 
f_*\mcO_{X'}(\lru E'\rru)\surjective f_*\mcO_{S'}(\lru E'\rru|_{S'}).
\]
Since $\lru E'\rru|_{S'}$ is an effective divisor, this map factors as
\[
\mcO_{X}=f_*\mcO_{X'}(\lru E'\rru)\to \mcO_{S}\injective \nu_*\mcO_{S^\nu}\injective f_*\mcO_{S'}(\lru E'\rru|_{S'}),
\]
where $\nu:S^\nu\to S$ is the normalization morphism. Thus we have $\mcO_{S}\cong\nu_*\mcO_{S^\nu}$, and hence $S$ is normal.\\

For the second part by contradiction assume that there is a prime Weil divisor $F$ over $S$ such that $a(F, S, B_S+\bbeta_S)\<-1$. Passing to a sufficiently high resolution $f:X'\to X$ we may assume that we are in the setup of Subsection \ref{subsec:adjunction} and that there is a $f$-exceptional divisor $E$ on $X'$ such that $E\cap S'=F$ and $f(E)=f|_{S'}(F)$. It then follows that $a(E, X, S+B+\bbeta_X)=a(F, S, B_S+\bbeta_S)\<-1$, a contradiction, since $(X, S+B+\bbeta)$ is gplt.   

\end{proof}~\\

\begin{lemma}\label{lem:CM-sheaves}
    Let $(X, B+\beta )$ be a generalized dlt pair, and $D$ a $\mbQ$-Cartier  $\mbZ$-divisor. Then $\mcO_X(D)$ is Cohen-Macaulay.
\end{lemma}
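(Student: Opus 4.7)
The plan is to adapt the standard proof of Cohen--Macaulayness of divisorial sheaves on algebraic klt varieties (cf.\ \cite[Cor.~5.25]{KM98}) to the analytic setting; the analytic inputs needed are resolution of singularities, relative Kawamata--Viehweg vanishing for proper birational morphisms, and Grothendieck duality for proper maps in the analytic category. Since Cohen--Macaulayness is a local property on $X$, we may shrink $X$ and work over a relatively compact Stein open subset throughout.

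First I would take a log resolution $f : Y \to X$ of the pair $(X, B + \Supp(D))$, write $K_Y + B_Y = f^*(K_X + B) + E'$ with $B_Y, E' \geq 0$ having no common components and $E'$ being $f$-exceptional, and define the ``klt-rounding''
\[
D_Y := \lru f^*D \rru - \lrd B_Y + \{-f^*D\} \rrd.
\]
Following the proof of Lemma \ref{l-wdvan} verbatim (specialized to $\bbeta = 0$), the identification $f_*\mcO_Y(D_Y) = \mcO_X(D)$ is immediate from the analog of \eqref{eqn:kv-isomorphism}. Since $D_Y - K_Y$ is $\mbQ$-linearly equivalent to $B^* + f^*(D - K_X - B)$, where $B^* := \{B_Y + \{-f^*D\}\}$ is a klt sub-boundary with SNC support and the pullback $f^*(D - K_X - B)$ is $f$-numerically trivial, the relative Kawamata--Viehweg vanishing theorem applied to the proper birational morphism $f$ yields
\[
R^i f_* \mcO_Y(D_Y) = 0 \qquad \text{for all } i > 0.
\]

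Since $Y$ is smooth, $\mcO_Y(D_Y)$ is a line bundle and hence Cohen--Macaulay of dimension $n := \dim X$. To descend this property to $X$, I would invoke Grothendieck duality for the proper morphism $f$:
\[
R\mathcal{H}om_X(\mcO_X(D), \omega_X^{\bullet}) \cong Rf_*\mcO_Y(K_Y - D_Y)[n].
\]
Applying the same klt-rounding construction to the $\mbQ$-Cartier Weil divisor $K_X + B - D$ (which is $\mbQ$-Cartier by the klt hypothesis on $(X, B)$ together with the $\mbQ$-Cartierness of $D$) produces a divisor $D'_Y$ on $Y$ with $f_*\mcO_Y(D'_Y) = \mcO_X(K_X+B-D)$ and $R^i f_* \mcO_Y(D'_Y) = 0$ for $i > 0$. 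A direct bookkeeping shows that $D'_Y$ and $K_Y - D_Y$ differ only by an $f$-exceptional divisor that is innocuous under $f_*$ and $R^i f_*$, whence $R^i f_* \mcO_Y(K_Y - D_Y) = 0$ for $i > 0$ as well. Therefore the right-hand side of the displayed formula is concentrated in degree $-n$, which is exactly the assertion that $\mcO_X(D)$ is Cohen--Macaulay of dimension $n$.

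The hard part will be the bookkeeping in the duality step: one must verify carefully that the klt-rounded divisor for $K_X + B - D$ agrees with $K_Y - D_Y$ up to an $f$-exceptional correction that does not affect the relevant pushforwards. This requires a detailed comparison of the fractional parts of $f^*D$ and $f^*(K_X + B - D)$ together with the decomposition of $B_Y$ and $E'$ into components supported over and away from $\Supp(D)$.
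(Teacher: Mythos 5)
Your first half is essentially Lemma~\ref{l-wdvan} with $\bbeta=0$ and is fine, provided you really follow that lemma's conventions: there one uses the sub-boundary log pullback $K_Y+\Gamma=f^*(K_X+B)$, whereas with your splitting $K_Y+B_Y=f^*(K_X+B)+E'$ the displayed equivalence is off by $-E'$, and $B^*-E'$ is no longer a boundary, so Kawamata--Viehweg does not literally apply; this is a repairable slip. Note also that the paper takes a much shorter route: Cohen--Macaulayness is local, so it passes to a relatively compact Stein open subset, observes that $\mcO_X(-mD)$ is then generated by global sections, and runs the proof of \cite[Corollary 5.25]{KM98} with a general member of $|-mD|$ in place of the ample twist $\mcO_X(mL-mD)$, the genericity (klt-ness after adding a small multiple of the general member) being supplied by the analytic Bertini theorem \cite[Theorem 2.21]{DHP22}.

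The genuine gap is your final step, the vanishing $R^if_*\mcO_Y(K_Y-D_Y)=0$ for $i>0$. First, $K_X+B-D$ is not an integral Weil divisor (the coefficients of $B$ are fractional), so ``the same klt-rounding construction'' cannot be applied to it as stated. Second, and decisively, an $f$-exceptional difference is not innocuous under $R^if_*$: exceptional twists are exactly what higher direct images detect, and controlling them is the content of the vanishing theorems, not bookkeeping. Concretely, with the conventions of Lemma~\ref{l-wdvan} one has $K_Y-D_Y=f^*(K_X+B-D)-B^*$ with $B^*=\{\Gamma+\{-f^*D\}\}$ effective, so $K_Y-D_Y$ is $f$-numerically equivalent to the anti-effective class $-B^*$; this is the wrong sign for Kawamata--Viehweg or Grauert--Riemenschneider, and no rounding of a $\mbQ$-linearly equivalent divisor changes the relative numerical class. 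The formal statement you would need, namely that an integral divisor $f$-numerically equivalent to minus a klt boundary has vanishing higher direct images, is false: on the blow up $f:Y\to X$ of a point of a smooth surface, $M=2E$ is $f$-numerically equivalent to $-\frac{2}{3}(C_1'+C_2'+C_3')$ for three general lines through the point, yet $R^1f_*\mcO_Y(2E)\neq 0$. In fact, given your duality isomorphism and the vanishing for $D_Y$, the assertion $R^if_*\mcO_Y(K_Y-D_Y)=0$ for $i>0$ is equivalent to the Cohen--Macaulayness of $\mcO_X(D)$, so the step you call bookkeeping is the entire lemma. Some genuine positivity or genericity input is indispensable here; that is precisely what the paper imports from \cite[Corollary 5.25]{KM98} via Stein localization, global generation of $\mcO_X(-mD)$, and analytic Bertini, which reduce to the situation $D\sim_{\mbQ}-\frac{1}{m}G$ with $(X,B+cG)$ klt for suitable $0<c<1$, rather than a direct Kawamata--Viehweg computation on a fixed log resolution.
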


\begin{proof} Since the question is local on $X$, by Lemma \ref{l-dlt} we may assume that $X$ is a relatively compact Stein space and $(X,B)$ is klt.
    Then the proof follows form \cite[Corollary 5.25]{KM98}. We explain the necessary changes needed for analytic varieties. Let $m>0$ be the Cartier index of $D$, then $\mcO_X(-mD)$ is a line bundle on $X$. Since $X$ is Stein, $\mcO_X(-mD)$ is generated by its global sections. Using $\mcO_X(-mD)$ in place of $\mcO_X(mL-mD)$ in the proof of \cite[Corollary 5.25]{KM98}, and using \cite[Theorem 2.21]{DHP22} for the necessary Bertini-type theorems, the arguments of \cite[Corollary 5.25]{KM98} work here.  
    
    % Now passing to a resolution of singularities of $X$ and using \cite[Theorem 2.21]{DHP22}, we see that there is an effective divisor $E\>0$ on $X$ such that $E\sim -mD$ and $(X, B+(1-\frac 1m)E)$ is klt. Let $p:X'\to X$ be the index $m$ covering of $X$ induced by the defining section $E$. If $R$ is the ramification divisor of $p$, then $R=(1-\frac 1m)p^*E$. Thus we have $p^*(K_X+B+(1-\frac 1m)E)=K_{X'}+p^*B+(1-\frac 1m)p^*E-R=K_{X'}+p^*B$; in particular, $(X', p^*B)$ is klt. This implies that $X'$ has rational singularities, and hence $\mcO_{X'}$ is CM. Since by construction, $\mcO_X(D)$ is a direct summand of $p_*\mcO_{X'}$, it is CM by \cite[Proposition 5.4]{KM98}. 
\end{proof}

% \begin{definition}\label{def:relative-canonical}
% Let $X$ be normal variety such that $K_X$ is $\mbQ$-Cartier, i.e. there is a positive integer $m>0$ such that $(\omega^{\otimes m})^{**}$ is a line bundle. Let $f:X'\to X$ be a resolution of singularities of $X$. Then there is an $f$-exceptional Weil divisor $E$ on $X'$ such that $\omega_{X'}^{\otimes m}\cong f^*((\omega_X^{\otimes m})^{**})(E)$. We define the \textit{relative canonical divisor} $K_{X/X'}$ as the following $\mbQ$-divisor on $X'$
% \[
% K_{X/X'}:=\frac{1}{m}E.
% \]
% \end{definition}

Proposition \ref{p-hw} below is an analytic version of a key result from \cite{HW19} (see also \cite{BK23}), and an important step towards the proof of Theorem \ref{t-pl}. First we  fix some notations.

\begin{definition}\label{def:Q-line-bundle}
Let $X$ be a normal analytic variety.
    \begin{enumerate}
    %     \item Let $S\subset X$ be a prime Weil divisor and $S$ is also normal. Then by adjunction, there is an effective $\mbQ$-divisor $\Delta_{\rm diff}\>0$ on $S$, called the \emph{different}, such that the following holds:
    % \[ 
    % (K_X+S)|_S\sim_{\mbQ}K_S+\Delta_{\rm diff}.
    % \]
    \item A non-empty open subset $U\subset X$ called \emph{big}, if $X\setminus U$ is a countable union of locally closed (analytic) subsets of $X$ of codimension at least $2$.

    \item A sheaf $\mcL$ on $X$  is called a $\Q$-{\emph {line bundle}}, if $\mathcal L$ is a reflexive sheaf of rank $1$ and $(\mathcal L^{\otimes m})^{**}$ is locally free for some $m>0$. 
Note that if $D$ is a $\Q$-Cartier Weil divisor, then $\mathcal L=\OO _X(D)$ is a $\Q$-line bundle. On the other hand, if $X$ is Stein, then for any $\Q$-line bundle $\mathcal L$ there exists a $\Q$-Cartier $\mbZ$-divisor $D$ such that $\mathcal L=\OO _X(D)$. Moreover, 
If $\mathcal L$ is a $\Q$-line bundle and $G$ is a $\mbZ$-divisor, then we define  $\mathcal L(G):=(\mathcal L \otimes \OO _X(G))^{**}$ and $\mathcal L(K_X+G):=(\mathcal L \otimes \omega _X\otimes \OO _X(G))^{**}$.
    \end{enumerate}  
\end{definition}
 Recall from Subsection \ref{subsec:adjunction} and Lemma \ref{lem:normality-of-plt-center} that if $(X,S+B+\bbeta )$ is a gplt pair, then $S$ is normal and $(S, B_S+\bbeta_S)$ is klt, where $(K_X+S+B+\bbeta)|_S=K_S+B_S+\bbeta_S$.
\begin{proposition}\label{p-hw} 
    Suppose that $(X,S+B+\bbeta )$ is a relatively compact gplt pair, and $S$ a $\mbQ$-Cartier prime Weil divisor.
    Let $\mcL$ be an rank one reflexive sheaf on $X$ such that $\mathcal L(K_X)$ is a $\mbQ$-line bundle  
    on $X$. Then there exists an effective $\Q$-divisor $0\<\Delta _S \leq B_S$, a positive integer $m>0$, 
and a  reflexive rank 1 sheaf $\mathcal E$ on $S$  such that the following sequence
\begin{equation}\label{eqn:reflexive-exact}
0\to \mathcal L(K_X)\to \mathcal L (K_X+S)\to \mathcal E \to 0
\end{equation}
is exact, and there is a smooth big open subset $S^0\subset S$ such that $\mathcal E|_{S^0}$ and
$(\mathcal L^{\otimes m})|_{S^0}$ are locally free, $m\Delta _S$ is a $\mbZ$-divisor,  and 
\begin{equation}\label{eqn:isom-on-S}
(\mathcal E|_{S^0}) ^{\otimes m}\cong
(\mathcal L^{\otimes m})|_{S^0}\otimes \mathcal O _{S^0}(m(K_S+\Delta _S)).
\end{equation}
\end{proposition}
\begin{proof} This proof follows along the lines of \cite[Proposition 3.1]{HW19}. We include the details as various arguments require more care than their algebraic counterparts.

Let $X=\cup X_i$ be a (finite) cover by relatively compact Stein open subsets $X_i$. Then $\mcL(K_X)|_{X_i}\cong \mcO_{X_i}(G_i)$ for some  $\mbQ$-Cartier $\mbZ$-divisor $G_i$ whose support does not contain $S_i:=S\cap X_i$.  Let $f_i:X'_i\to X_i$ be a log resolution of $(X_i, S_i)$.
We define $\Delta_{S_i}:=(f_i|_{S'_i})_*(\{-f^*_iG_i\}|_{S'_i})$, where $S'_i:=(f_i)^{-1}_{*}S_i$. We first claim that $\Delta_{S_i}$ is independent of the log resolution $f_i$. To this end let $X''_i\to X_i$ be another log resolution of $(X_i, S_i)$ %, and then passing to a hat diagram we may assume that $X''_i\to X_i$ factors 
that factors through $f_i$, and $g_i:X''_i\to X'_i$ the induced morphism. It suffices to show that   \begin{equation}\label{eqn:defining-the-boundary}
    ((f_i\circ g_i)|_{S''_i})_*(\{-(f_i\circ g_i)^*G_i\}|_{S''_i})=(f_i|_{S'_i})_*(\{-f^*_iG_i\}|_{S'_i})
\end{equation} where $S''_i:=(g_i)^{-1}_{*} S'_i$.
Let $P\subset S_i$ be a prime divisor on $S_i$, and $P'$ and $P''$  its strict transforms on $S'_i$ and $S''_i$, respectively. Note that, since $G_i$ is a $\mbZ$-divisor, the support of $\{-f^*_iG_i\}$ and $\{-(f_i\circ g_i)^*G_i\}$ are contained in the exceptional loci of $f_i$ and $f_i\circ g_i$, respectively. 
Since $X'_i$ is smooth, $\lfloor f_i^*G_i\rfloor $ is Cartier and so is $g_i^*(\lfloor f_i^*G_i\rfloor )$. Thus $\{-(f_i\circ g_i)^*G_i\}=g_i^*\{-f_i^*G_i\}$.
But then \[ \{-(f_i\circ g_i)^*G_i\}|_{S''_i}= (g_i^*\{-f_i^*G_i\})|_{S''_i}= (g_i|_{S''_i})^*(\{-f_i^*G_i\}|_{S'_i}).\]
Pushing forward by $g_i|_{S''_i}$ we have  $(g_i|_{S''_i})_*(\{-(f_i\circ g_i)^*G_i|_{S''_i}\})=\{-f_i^*G_i\}|_{S'_i}$. Pushing forward by $f_i|_{S'_i}$ we obtain \eqref{eqn:defining-the-boundary}.

%Now since the the divisor $\Ex(f_i)+S'$ has SNC support, there is at most one component of $E'$ of $\{-f^*_iG_i\}$ such that $E'|_{S'_i}=P'$. Then again, since $\Ex(f_i\circ g_i)+S''$ has SNC support, $E'':=g^{-1}_{i,*}E'$ is the unique component of $\{-(f_i\circ g_i)^*G_i\}$ such that $E''|_{S''_i}=P''$. Moreover, since $g_i|_{S''_i}:S''_i\to S'_i$ is an isomorphism near the generic points of $P''$ and $P'$, it follows that the coefficients of the LHS and RHS of \eqref{eqn:defining-the-boundary} at $P$ are equal. This proves our claim that $\Delta_{S_i}$ is independent of the log resolution $f_i$. 
Next, observe that if $G'_i$ is another $\mbZ$-divisor on $X_i$ such that 
$G_i\sim G'_i$, then $f^*_iG_i\sim f^*_iG'_i$, and thus $\{-f^*G_i\}=\{-f^*_iG'_i\}$. In particular,  $\Delta_{S_i}$ depends only on $\mcL(K_X)|_{X_i}$. Next we will show that the $\Delta_{S_i}$ can be glued together to a unique divisor $\Delta_S$ on $X$. To this end, let $X_{ij}:=X_i\cap X_j$ and $S_{ij}:=S_i\cap S_j$. Then $G_i|_{X_{ij}}\sim G_j|_{X_{ij}}$, and thus $\Delta_{S_i}|_{S_{ij}}=\Delta_{S_j}|_{S_{ij}}$, this follows from the fact that $\Delta_{S_i}$ does not depend on the choice of log resolution, as we proved above. Thus $\{\Delta_{S_i}\}_i$ glue together to give a unique $\mbQ$-divisor $\Delta_S$ on $S$. If $X=\cup X'_i$ is another (finite) cover of $X$ by relatively compact Stein open subsets of $X$ and if $\Delta'_S$ is another $\mbQ$-divisor on $S$ defined as above using the cover $\{X'_i\}$, then from our arguments above it follows that, for any $i$, $\Delta'_S|_{S\cap X'_i\cap X_j}=\Delta_S|_{S\cap X'_i\cap X_j}$ for all $j$. In particular, $\Delta'_S=\Delta_S$, i.e. $\Delta_S$ is independent of the choice of the cover of $X$. \\

%%%%%%%%%%%%%%%%%%%%%%%%%%%%%%%%%%%%%%%%%%%%%%%%%%%%%%%%%%%%%%%%%%%%%%%%%%%%%%%%%%%%%%%%%%%%%%%%%%%%%%%%%%%%%%%%%%%%%%%

% We will first define $\Delta_S$; let $\mu:X'\to X$ be a log resolution of $(X, S)$ given by a sequence of blow ups of smooth centers of codimensions at least $2$, and $S':=\mu^{-1}_*S$. Let $X=\cup U_i$ be a cover by Stein open subsets. We may write $\mathcal L (K_X)|_{U_i}=\OO _{U_i}(G_i)$, where $G_i$ is a Weil divisor and $\Q$-Cartier. We define $U'_i:=\mu ^{-1}(U_i)$, $\mu _i:=\mu |_{U'_i}$, $S_i:=S\cap U_i$, $S'_i:=S'\cap U'_i$, $\nu _i:=\mu _i|_{S'_i}$ and \hl{$\Delta_{S_i}:=\nu _{i,*} (\{-\mu_i^*G_i\}|_{S'_i})$}.
%     Note that the support of $\{-\mu_i^*G_i\}$ is contained in $\Ex(\mu_i)$, and hence it does not contain $S'_i$. Also notice that if $\bar G_i\sim G_i$ on $U_i$, then $\{-\mu_i^*G_i\}=\{-\mu_i^*\bar G_i\}$. Therefore, the definition of $\Delta_{S_i}$ is independent of our choice of $G_i$. It then follows that, on $U_{i,j}:=U_i\cap U_j$, we have $G_i|_{U_{i,j}}\sim G_j|_{U_{i,j}}$ and hence $\Delta_{S_i}|_{S_{i,j}}=\Delta_{S_j}|_{S_{i,j}}$, where $S_{i,j}:=S_i\cap S_j$.
%     Therefore, the $\Delta_{S_i}$ glue together to define a unique $\mbQ$-divisor $\Delta_S$ on $S$ such that $\Delta_S |_{S_i}=\Delta_{S_i}$.

%     \hl{We note that $\Delta_S$ depends on the choice of the log resolution $\mu:X'\to X$.}

Next we will prove rest of the claims under the additional assumption that $X$ is $\Q$-factorial and then deduce the general claim at the end. Assume therefore, for the time being that $X$ is $\mbQ$-factorial.

Observe that the exactness of the sequence \eqref{eqn:reflexive-exact} can be checked locally on $X$. Moreover, notice that in order to produce a smooth big open subset $S^0\subset S$ and an integer $m>0$ satisfying the isomorphism in \eqref{eqn:isom-on-S}, it is enough to work on a finite open cover of $X$, and since $X$ is relatively compact, it can be covered by finitely many relatively compact Stein open subsets. Thus from now on we assume that $X$ itself is a relatively compact Stein space. We note here that $X$ is no longer $\mbQ$-factorial (as $\mbQ$-factoriality is not an analytic local property), however, $K_X, S$ and $B$ are all $\mbQ$-Cartier and $\mcL$ is represented by an effective $\mbQ$-Cartier $\mbZ$-divisor, say $D$, i.e. $\mcL\cong \mcO_X(D)$; we may further assume here that $D$ does not contain $S$ in its support. In particular, we have $\mathcal L(K_X)=\OO _X(K_X+D)$ and $\mathcal L(K_X+S)\cong \OO _X(K_X+S +D)$, where $K_X+D$ and $K_X+S+D$ are both $\mbQ$-Cartier Weil divisors.
Note that since $(X,S+B+\bbeta )$ is gplt, by Lemma \ref{lem:normality-of-plt-center}, $S$ is normal and $(X, S)$ is plt.
Let $\Theta$ be an effective $\mbQ$-divisor on $S$ such that $(K_X+S)|_S=K_S+\Theta$. Then $\Theta\leq B_S$, and thus
by cutting down $X$ by general hyperplanes it follows from \cite[Lemma 3.3]{HW19} that $\Delta_S\leq \Theta\leq B_S$.

Now consider the following exact sequence
\begin{equation}\label{eqn:seq-on-X}
    0\to \OO _X(K_{X} +D)\to \OO _X(K_{X} +S+D)\to \mathcal E\to 0.
\end{equation}
By Lemma \ref{lem:CM-sheaves}, $\OO _X(K_{X} +D)$ and $\OO _X(K_{X} +S+D)$ are Cohen-Macaulay, and hence so is $\mathcal E$ by \cite[Corollary 2.62]{Kol13}. Thus by \cite[Proposition 1.3]{Har80}, $\mcE$ is a reflexive sheaf on $S$ of rank $1$. 

Let $\mu:X'\to X$ be a log resolution of $(X, S+D)$ and $S':=\mu^{-1}_*S$. Let $B_{X'}$ be a $\mbQ$-divisor on $X'$ defined by the equation.
    \[K_{X'}+S'+B _{X'}=\mu ^*(K_X+S).\]
Note that since $\mu: X'\to X$ is projective, $K_{X'}$ is represented by a Weil divisor.
    Moreover, since $(X,S)$ is plt,  $\lfloor B _{X'} \rfloor \leq 0$, and it follows easily that $\lceil \mu ^*D\rceil \geq \lfloor B _{X'} +\mu ^*D \rfloor$.
    Thus 
    \[K_{X'}+S'+\lceil \mu ^*D\rceil\geq \lfloor K_{X'}+S'+B_{X'} +\mu ^*D \rfloor =\lfloor \mu ^*(K_X+S+D) \rfloor , \qquad {\rm and}\]

    \[K_{X'}+\lceil \mu ^*D\rceil\geq \lfloor K_{X'}+B _{X'} +\mu ^*D \rfloor \>\lfloor \mu ^*(K_X+D) \rfloor .\]
%{\color{red} The second inequality follows from a similar computation as above. For this one $K_X$ $\mbQ$-Cartier is needed. }
        It follows (see for example \cite[Lemma 3.2]{Das21}) that 
    \[\mu _* \OO _{X'}(K_{X'}+S'+\lceil \mu ^*D\rceil)=\OO _X(K_{X}+S +D), \qquad {\rm and}\]
    \[\mu _* \OO _{X'}(K_{X'}+\lceil \mu ^*D\rceil)=\OO _X(K_{X} +D).\]
Let $G':=K_{S'}+\lru\mu^*D\rru|_{S'}$. Then we have the following exact sequence
    \[0\to \OO _{X'}(K_{X'}+\lceil \mu ^* D\rceil )\to \OO _{X'}(K_{X'}+S'+\lceil \mu ^* D\rceil )\to  \mcO_{S'}(G')\to 0.\] 
Since $K_{X'}+\lceil \mu ^* D\rceil\equiv _X K_{X'}+\{ -\mu ^*D \} $, where $(X',\{ -\mu ^*D \})$ is klt (as $\mu^*D$ has SNC support), and $\mu$ is bimeromorphic,
    by the relative Kawamata-Viehweg vanishing theorem (see Lemma \ref{l-wdvan}), we have $R^1\mu _* \OO _{X'}(K_{X'}+\lceil \mu ^* D\rceil )=0$. Thus applying $\mu _*$, we obtain an exact sequence
    \begin{equation}\label{eqn:pushforward-seq-on-X}
      0\to \OO _X(K_{X} +D)\to \OO _X(K_{X} +S+D)\to (\mu|_{S'})_*\mcO_{S'}(G') \to 0.  
    \end{equation}
Now let $G:=(\mu|_{S'})_*G'=(\mu|_{S'})_*(K_{S'}+\{-\mu^*D\}|_{S'}+\mu^*D|_{S'})=K_S+\Delta_S+D|_{S}$. We will now proceed to show the isomorphism \eqref{eqn:isom-on-S}. To that end first observe that, since $S$ is normal, it is smooth in codimension 1, and since $\mathcal E$ is reflexive, it is locally free in codimension 1. Hence, there is a big open subset $S^0\subset S$ such that $S^0$ is smooth, $\mu|_{S'}:S'\to S$ is an isomorphism over $S^0$, and $\mathcal E|_{S^0}$ is locally free.
 Recall that  $\mu:X'\to X$ is given by a sequence of blow ups along smooth centers. Let $X^0\subset X$ be the open subset of $X$ obtained by removing the images of the exceptional divisors of $\mu$ whose centers have codimension $\>3$ in $X$. Thus,  over $X^0$, $\mu$ is given by a sequence of blow ups of smooth centers of codimension $2$. Furthermore, removing the closed analytic subset $S\setminus S^0$ of $X$ (which has comdim $\geq 3$ in $X)$ from $X^0$ we may assume that $S^0=S\cap  X^0$. By the classification of plt surface singularities it follows that we may also assume that there is an integer $m>0$ such that $mK_X$, $mS$ and $mD$ are Cartier on $X^0$.
Now from \eqref{eqn:seq-on-X} and \eqref{eqn:pushforward-seq-on-X} it follows that $\mcE\cong (\mu|_{S'})_*\mcO_{S'}(G')$. Moreover, since $\mu|_{S'}$ is an isomorphism over $S^0$ by our construction, the natural morphism $(\mu|_{S'})_*\mcO_{S'}(G')\to  \mcO_S(G)$ is an isomorphism over on $S^0$. Therefore we have $\mcE|_{S^0}\cong \mcO_{S}(G)|_{S^0}$; in particular, $\mcE\cong \mcO_S(G)$, as both sheaves are reflexive and $S^0$ is a big open subset of $S$. Restricting the sequence \eqref{eqn:pushforward-seq-on-X} on $X^0$ we see that $mG=m(K_S+\Delta_S+D|_S)$ is a Cartier divisor on $S^0$; in particular, $m\Delta_S$ is a $\mbZ$-divisor on $S$, as $mD|_S$ is Cartier. Thus
\[
(\mcE|_{S^0})^{\otimes m}\cong (\mcO_{S}(G)|_{S^0})^{\otimes m}\cong (\mcL^{\otimes m})|_{S^0}\otimes \mcO_{S}(m(K_S+\Delta_S))|_{S^0}. 
\]

% It suffices now to check that the natural inclusion \[(\mathcal E|_{S^0}) ^{\otimes m}\to (\mathcal L  \otimes \OO _X(K _X+S))^{\otimes m}|_{S^0}=(\mathcal L ^{\otimes m})|_{S^0}\otimes \OO _{S_0}(m(K_S+\Delta _{\rm diff}))\] induces the isomorphism $(\mathcal E|_{S^0}) ^{\otimes m}\cong \OO _{S^0}(m(K_S+\Delta _S))\otimes \mathcal O _X(mD)|_{S^0}$ on a neighborhood $U_s\subset X^0$ of each point $s\in S^0$.
% Recall that $\mcE=\mu_*\mcE'$, where $\mcE'=\mcO_{S'}(K_{S'}+\lru\mu^*D\rru|_{S'})$. Let $G':=K_{S'}+\lru\mu^*D\rru|_{S'}$; then $G:=(\mu|_{S'})_*G'=(\mu|_{S'})_*(K_{S'}+\{-\mu^*D\}|_{S'}+\mu^*D|_{S'}))\sim_{\mbQ} K_S+\Delta_S+D|_{S}$. Since $\mu|_{S'}$ is an isomorphism over $S^0$ by our construction, the natural morphism $\mcE\to \mcO_S(G)$ is an isomorphism over on $S^0$. Moreover, since $\mcE$ and $\mcO_S(G)$ are both reflexive sheaves and $S^0$ is a big open subset of $S$, it follows that $\mcE\to \mcO_S(G)$ is an isomorphism on $S$. 

% We may assume that $U_s$ is Stein. Cutting by general hyperplanes passing through $s$ we may assume that $\dim X=2$. The claim now follows by a standard computation for surface plt singularities as in \cite[Lemma 3.3]{HW19}.
     
 Finally, we address the general (non-$\Q$-factorial) case. As argued in the $\mbQ$-factorial case, we may assume that $X$ is a relatively compact Stein space. Shrinking $X$ further if necessary, we may assume by   
 \cite[Theorem 2.19]{DHY23} that there is exists a small bimeromorphic morphism $\mu:X'\to X$ such that $X'$ is $\Q$-factorial.
 We let $K_{X'}+S'+B'+\bbeta _{X'}=\mu ^*(K_X+S+B+\bbeta _X)$ and $\mathcal L'(K_{X'}):=(\mu ^* \mathcal L(K_{X}))^{**}$ the reflexive hull of $\mu ^* \mathcal L(K_{X})$. Then $\mathcal L'(K_{X'})$ is a $\Q$-line bundle and 
 $\mathcal L'(K_{X'})\equiv _X0$. By what we have seen above, there is a short exact sequence
 \[0\to \mathcal L'(K_{X'})\to \mathcal L'(K_{X'}+S')\to \mathcal E '\to 0\]
 and a smooth big open subset $S'^0\subset S'$ such that $\mathcal E'|_{S'^0}$ and
$(\mathcal L'^{\otimes m})|_{S'^0}$ are locally free, $m\Delta' _S$ is a $\mbZ$-divisor, $0\leq \Delta' _S\leq\Theta'\leq B'_{S'}$ where $(K_{X'}+S')|_{S'}=K_{S'}+ \Theta'$, $\Theta'\leq B'_{S'}$, and 
\[(\mathcal E'|_{S'^0}) ^{\otimes m}\cong (\mathcal L'^{\otimes m})|_{S'^0}\otimes \mathcal O _{S'^0}(m(K_{S'}+\Delta _{S'})).\]
Since $\mu$ is small, $\mu _*\mathcal L'(K_{X'})=\mathcal L(K_{X})$ and $\mu _*\mathcal L'(K_{X'}+S')=\mathcal L(K_{X}+S)$. Note that, since $X$ is Stein, by a similar argument as in the proof of \cite[Theorem 2.19]{DHY23} it follows that there is a $\mbQ$-divisor $\Gamma'$ on $X'$ such that $\bbeta_{X'}\num_{X'} \Gamma'$, and $(X', S'+B'+\Gamma')$ and $(X, S+B+\Gamma)$ are both plt, where $\Gamma:=\mu_*\Gamma'$. Moreover, we have $K_{X'}+S'+B'+\Gamma'\num \mu^*(K_X+S+B+\Gamma)$. Finally, by a similar argument as in \cite[Proposition 2.43]{KM98}, we see that there is an effective $\mbQ$-divisor $G$ on $X$ such that $(X, G)$ is klt. Let $K_{X'}+G'=\mu^*(K_X+G)$. Then $\mcL'(K_{X'})\num_{X'} 0\num_{X'} \mu^*(K_X+G)\num_{X'}K_{X'}+G'$, where $(X', G')$ is klt. Thus by the relative Kawamata-Viehwewg vanishing theorem $R^1\mu _*\mathcal L'(K_{X'})=0$, and so there is a short exact sequence
 \[0\to \mathcal L(K_{X})\to \mathcal L(K_{X}+S)\to \mu _*\mathcal E '\to 0.\]
 Arguing as above we see that $\mathcal L(K_{X})$ and $\mathcal L(K_{X}+S)$ are both Cohen-Macaulay, and hence so is $\mathcal E:=\mu _*\mathcal E '=\mathcal L(K_{X}+S)|_S$. It follows, as explained above that $\mathcal E$ is reflexive of rank 1. Since $\mu ({\rm Ex}(\mu))$ has codimension $\geq 3$, $S^0:=S\setminus \left(\mu\left(\Ex(\mu)\cup \left(S'\setminus {S'}^0\right)\right)\right)$ is a smooth big open subset of $S$ such that $\mu$ is an isomorphism on a neighborhood of $S^0$. Thus $\mathcal E|_{S^0}$ and
$(\mathcal L^{\otimes m})|_{S^0}$ are locally free, $m\Delta _S:=(\mu|_{S'})_*(m\Delta' _S)$ is a $\mbZ$-divisor, where $0\leq \Delta _S=(\mu|_S)_*\Delta'_S\leq (\mu|_S)_*B'_{S'}=B_S$, $K_{S}+B_S+\bbeta_S=(K_{X}+S+B+\bbeta)|_{S}$, and \[(\mathcal E|_{S^0}) ^{\otimes m}\cong (\mathcal L^{\otimes m})|_{S^0}\otimes \mathcal O _{S^0}(m(K_{S}+\Delta _{S})).\]

\end{proof}

%%%%%%%%%%%%%%%%%%%%%%%%%%%%%%%%%%%%%%%%%%%%%%%%%%%%%%%%%%%%%%%%%%%%%%%%%%%%%%%%%%%%%%%%
%%%%%%%%%%%%%%%%%%%%%%%%%%%%%%%%%%%%%%%%%%%%%%%%%%%%%%%%%%%%%%%%%%%%%%%%%%%%%%%%%%%%%%%%

The following lemma (which is well known to experts) will be useful in our proof of the general contraction Theorem \ref{t-ext}. 
\begin{lemma}\label{lem:partial-q-factorization}
    Let $(X, B+\bbeta)$ be a relatively compact gklt pair and $D_1, D_2,\ldots, D_n$ a collection of  $\mbQ$-divisors. Then there is a projective small bimeromorphic morphism $f:X'\to X$ such that $f^*D_i:=f^{-1}_*D_i$  are  $\mbQ$-Cartier divisors on $X'$ for all $1\<i\<n$.
\end{lemma}

\begin{proof}
Note that by induction it is enough to prove the result for $n=1$. Now replacing $X$ by a small relatively compact Stein neighborhood we may assume by \cite[Theorem 2.16]{DHY23} that there is a projective small $\mbQ$-factorization $f:X'\to X$ and an effective $\mbR$-divisor $\Delta'$ such that $\bbeta_{X'}\num_X \Delta'$, $(X', B'+\Delta')$ and $(X, B+\Delta)$ are both klt, and $K_{X'}+B'+\Delta'=f^*(K_X+B+\Delta)$ , where $\Delta:=f_*\Delta'$. Let $D:=nD_1$ so that $D$ is a $\mbZ$-divisor, and $D':=f^{-1}_*D$; then $D'$ is $\mbQ$-Cartier. Choose $0<\eps\ll 1$ so that $(X, B+\Delta+\eps D)$ is klt. 
Shrinking $X$ further we may assume that $K_X+B+\Delta\sim_{\mbR} 0$. Then $K_{X'}+B'+\Delta'+\eps D'\sim_{\mbR} \eps D'$. Next, we run a $D'$-MMP over $X$ as in \cite[Theorem 1.4]{DHP22} and then replacing $X'$ by the output of this MMP we may assume that $D'$ is nef over $X$. 
By the relative base-point free theorem (see \cite[Theorem 8.1]{Fuj22}) we have that $D'$ is semi-ample over $X$. In particular, $\oplus_{m\geq 0}f_*\mcO_{X'}(mD')$ is a finitely generated $\mcO_X$-algebra. But since $f$ is  small, $f_*\mcO_{X'}(mD')=\mcO_X(mD)$. Thus $\oplus_{m\geq 0}\mcO_{X}(mD)$ is a finitely generated $\mcO_X$-algebra.
Now, since $X$ is a relatively compact space, there is an analytic space $W$ such that $X\subset W$ is an open subset and $\overline X\subset W$. Let $\overline X\subset U\subset W$ be a small neighborhood of $\overline X$ such that $U=\cup U_i$ is a finite union of relatively compact Stein open subsets and $\oplus_{m\>0}\mcO_{U_i}(mD|_{U_i})$ are finitely generated $\mcO_{U_i}$-algebras. Then $Y:={\rm Projan} \oplus_{m\geq 0}\mcO_{U}(mD|_{U})$ is a normal analytic variety such that the projection $\pi:Y\to U$ is a projective small bimeromorphic morphism and $\pi^{-1}_*D$ is $\mbQ$-Cartier (see \cite[Lemma 6.2]{KM98}). Let $X'':=\pi^{-1}(X)$ and $g:=\pi|_{X''}$; then $g:X''\to X$ is a projective small bimeromorphic morphism of normal analytic varieties such that $g^{-1}_*D$ is $\mbQ$-Cartier. This completes our proof.

\end{proof}

The following result is a generalization of Theorem \ref{t-pl} that works for generalized pairs.

\begin{theorem}\label{t-ext}
Let $(X,S+B+\bbeta )$ be a generalized plt pair, where $X$ is a compact analytic variety such that $S$ is $\mathbb Q$-Cartier. Suppose that 
\begin{enumerate}
    \item $\lrd S+B\rrd=S$ is irreducible, 
    \item there is a proper morphism $\pi:S\to T$ such that $\pi _*\OO_S=\OO_T$,
    \item $-(K_X+S+B+\bbeta_X)|_S$ is relatively K\"ahler (over $T$), and 
    \item $-S|_S$
is $\pi$-ample.
\end{enumerate}  Then there exists a bimeromorphic morphism $p:X\to Z$ such that $p|_S=\pi$ and $p|_{X\setminus S}$ is an isomorphism.
\end{theorem}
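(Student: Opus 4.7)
The plan is to verify the hypotheses of Fujiki's contraction theorem (Theorem \ref{t-fuj74}) applied to the Cartier thickening $A = mS$, where $m$ is the Cartier index of $S$. (After a small perturbation of $B$ and $\bbeta$ that preserves gplt-ness and the relative Kähler-ness of condition (3), we may assume $S$ is $\mathbb{Q}$-Cartier.) Since $-S|_S$ is $\pi$-ample, $\pi : S \to T$ is a projective morphism. By Lemma \ref{lem:normality-of-plt-center}, $S$ is normal and $(S, B_S+\bbeta_S)$ is gklt, where $K_S + B_S + \bbeta_S = (K_X + S + B + \bbeta_X)|_S$ is defined by the adjunction of Subsection \ref{subsec:adjunction}. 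Condition (3) then forces $-(K_S + B_S + \bbeta_S)$ to be $\pi$-Kähler and hence $\pi$-ample.

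For any $k \geq 1$, the $\mathbb{Q}$-Cartier class
\[
-kmS|_S - (K_S + B_S + \bbeta_S) \;=\; (km-1)\bigl(-S|_S\bigr) + \bigl(-S|_S - (K_S+B_S+\bbeta_S)\bigr)
\]
is $\pi$-ample, hence nef and big over $T$. Therefore, by Lemma \ref{l-wdvan} applied to the gklt pair $(S, B_S+\bbeta_S)$ and the $\mathbb{Q}$-Cartier Weil divisor $-kmS$, we obtain $R^i\pi_*\mathcal{O}_S(-kmS) = 0$ for all $i > 0$ and $k \geq 1$. Writing $f := \pi|_A : A \to T$ and arguing by induction on $j$ using the short exact sequences
\[
0 \to \mathcal{O}_S(-(k+j)mS) \to \mathcal{O}_{(j+1)mS}(-kmS) \to \mathcal{O}_{jmS}(-kmS) \to 0,
\]
this vanishing transfers to the non-reduced thickenings and yields $R^1 f_*\mathcal{O}_A(-kA) = 0$ for all $k > 0$. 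Moreover, since $\mathcal{O}_S(-mS|_S)$ is $\pi$-ample, the same argument used in the proof of Lemma \ref{l-cont} shows that $\mathcal{O}_A(-A) = \mathcal{O}_{mS}(-mS)$ is $f$-ample.

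Both hypotheses of Theorem \ref{t-fuj74} are now satisfied, so we obtain a proper modification $p : X \to Z$ with $p_*\mathcal{S}_{X,A,f} = \mathcal{O}_Z$. By construction $p$ is an isomorphism on $X \setminus S$ (since $\mathcal{S}_{X,A,f} = \mathcal{O}_X$ there) and $p|_A$ factors through $f$; the latter, together with $A_{\mathrm{red}} = S$ and the condition $\pi_*\mathcal{O}_S = \mathcal{O}_T$, gives $p|_S = \pi$ once we identify $T$ with its image in $Z$. The main technical point is the bookkeeping in the second paragraph — transferring the relative Kawamata–Viehweg vanishing from the reduced divisor $S$ to its Cartier thickening $A = mS$ — which proceeds exactly as in the proof of Lemma \ref{l-cont}; every other step is a direct application of the preparatory results in the excerpt.
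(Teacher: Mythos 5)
There is a genuine gap, and it is precisely the issue this paper was written to fix. Your induction rests on the sequence $0 \to \mathcal{O}_S(-(k+j)mS) \to \mathcal{O}_{(j+1)mS}(-kmS) \to \mathcal{O}_{jmS}(-kmS) \to 0$, but the kernel of $\mathcal{O}_{(j+1)mS}(-kmS)\to\mathcal{O}_{jmS}(-kmS)$ is $\mathcal{O}_X(-jmS)/\mathcal{O}_X(-(j+1)mS)$ twisted back, i.e.\ $\mathcal{O}_{mS}(-(k+j)mS)$ --- a sheaf on the non-reduced thickening $mS$, not on the reduced $S$ (they coincide only if $m=1$, i.e.\ $S$ Cartier). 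So the relative Kawamata--Viehweg vanishing you prove on $S$ does not propagate: the base case of your induction is already the statement $R^1\pi_*\mathcal{O}_{mS}(-kmS)=0$ that you are trying to prove. To pass from $S$ to $mS$ one must filter by $\mathcal{O}_X(-S)\supset\mathcal{O}_X(-2S)\supset\cdots$, and since $S$ is only $\Q$-Cartier the graded pieces $\mathcal{O}_X(-kS)/\mathcal{O}_X(-(k+1)S)$ are \emph{not} $\mathcal{O}_S(-kS|_S)$; they are the reflexive sheaves $\mathcal{E}_k\cong\mathcal{O}_S(G_k)$ with $G_k\sim_\Q -kS|_S-\Delta_k$, $0\le\Delta_k\le{\rm Diff}_S(S+B)$, whose identification is exactly Proposition \ref{p-hw}, and whose vanishing $R^1\pi_*\mathcal{E}_k=0$ is Claim \ref{c-van} of the paper's proof. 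Confusing $S$ with its thickenings here is the very error the paper attributes to \cite{HP16} and \cite{AT84}, so it cannot be waved through as ``bookkeeping.''

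The second gap is that ``$f:=\pi|_A:A\to T$'' is not a morphism of complex spaces: $\pi$ is defined only on the reduced space $S=A_{\rm red}$, and a morphism from the non-reduced Cartier divisor $A=mS$ (which is what Theorem \ref{t-fuj74} genuinely requires, since its hypotheses concern $\mathcal{O}_A(-kA)$ and $f_*$ on $A$) does not exist automatically; there is a lifting problem through $\mathcal{O}_A\twoheadrightarrow\mathcal{O}_S$. In the paper this is resolved by constructing thickenings $T_k={\rm Specan}(\pi_*\mathcal{O}_{S_k})$ of $T$ and morphisms $\pi_k:S_k\to T_k$, which is possible precisely because the surjectivity of $\pi_*\mathcal{O}_{S_{k+1}}\to\pi_*\mathcal{O}_{S_k}$ follows from $R^1\pi_*\mathcal{E}_k=0$; the target of the contraction datum is the thickened $T_\ell$, not $T$. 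So both missing steps funnel back to Proposition \ref{p-hw} and Claim \ref{c-van}, which your proposal bypasses entirely. (A smaller point: perturbing $B$ and $\bbeta$ cannot affect whether $S$ is $\Q$-Cartier; fortunately no perturbation is needed, since an integral divisor that is $\mathbb{R}$-Cartier is automatically $\Q$-Cartier, and compactness of $X$ gives a global Cartier index.) Once the morphism $\pi_A$ and the relative ampleness of $\mathcal{O}_A(-A)$ are in place, the passage to vanishing for all $k>0$ is done in the paper by relative Serre vanishing after replacing $A$ by a multiple (Claim \ref{c-van+}), not by the KV-vanishing bootstrap you propose.
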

\begin{proof}
By adjunction (as in Lemma \ref{lem:normality-of-plt-center}) we know that $S$ is normal, $(S,B_S+\bbeta _S)$ is gklt and  $-(K_S+B_S+\bbeta _S) $ is K\"ahler over $T$, where $(K_X+S+B+\bbeta)|_S=K_S+B_S+\bbeta _S$.
%Since $K_X+B$ and $S$ are $\mathbb R$-Cartier, $(X,S+B)$ is plt. 
%We also write $(K_X+S)|_S=K_S+\Delta _S$ where $\Delta _S\leq B_S$. Let $\nu :(X',S'+B'+\beta ')\to (X,S+B+\beta) $ be a log resolution of the generalized pair $(X,B+\beta)$. We write $E=f^*\beta -\beta'$, then $E$ is effective, $\nu$-exceptional and $K_{X'}+S'+B'-E=\nu ^*(K_X+S+B)$ so that $\Delta _S=\mu _* (B'-E)|_{S'}$ and $B_S=\mu_* B'|_S$ where $\mu =\nu |_{S'}:S'\to S$. For any $k\geq 0$  consider the divisor $D_{k}=-(k+1)S-K_X-S-B$, $\Delta' _k =\lceil \nu ^*D_k\rceil -\nu ^*D_k$ and the short exact sequence  \[0\to \OO_{X'}(K_{X'}+ \lceil \nu ^*D_k\rceil -\lfloor \Delta '_k\rfloor )\to \OO_{X'}(K_{X'}+S'+ \lceil \nu ^*D_k\rceil -\lfloor \Delta '_k\rfloor )\to \mathcal O _{S'}(G'_k)\to 0.\]

Choosing $\mcL=\mcO_X(-K_X-(k+1)S)$ for $k\>1$ in eqn. \eqref{eqn:reflexive-exact} we get the following short exact sequences
 \begin{equation}\label{eqn:reflexive-exact-k}
 0\to \OO_X(-(k+1)S)\to \OO_X(-kS)\to \mathcal E_k\to 0.
 \end{equation}
% Since $K_X$ and $K_X+B$ are both $\mbQ$-Cartier, from \cite[Lemma 2.13]{DHY23} it follows that $X$ has klt singularities, and thus by Lemma \ref{lem:CM-sheaves} the first two terms of the above sequence are Cohen-Macaulay. 
Since $S$ is $\Q$-Cartier, $\OO _X(S)$ is a $\Q$-line bundle, and thus by Proposition \ref{p-hw}, $\mcE_k$ is a reflexive rank $1$ sheaf on $S$. %$\mcE_k\cong \OO _S(G_k)$ where $G_k$ is a Weil divisor such that $G_k \num -kS|_S-\Delta_k$, where $\Delta_k\>0$ is an effective $\mbQ$-divisor on $S$ such that $0\leq \Delta _k\leq {\rm Diff}_S(S+B)$.
Let $S_{k}$ be the thickening of $S$ defined by $\OO_{S_k}\cong \OO_X/\OO_X(-kS)$. Then we have the following short exact sequence of sheaves of rings on $S$
\[0\to \mathcal E_k \to  \OO_{S_{k+1}}\to \OO_{S_k}\to 0 .\]
We can then consider the corresponding long exact sequence of sheaves of rings on $T$ obtained by pushforward (note that identifying the topological spaces of the thickenings $|S_k|=|S|$, we may regard $\pi:|S_k|\to |T|$ as a morphism of topological spaces).
We claim that: 
\begin{claim}\label{c-van} 
$R^1\pi _* \mathcal E_k =0$, and so $\pi_* \OO_{S_{k+1}}\to \pi_* \OO_{S_k}$ is surjective for all $k\>1$.
\end{claim}

Grant this for the time being. Since $\pi_* \OO_{S_{k+1}}\to \pi_* \OO_{S_k}$ is surjective for $k\geq 1$,  we have inclusions $T_k\hookrightarrow T_{k+1}$, where $T_k:={\rm Specan} (\pi _* \OO_{S_{k}})$. Since $\pi_* \OO_{S_1}=\pi_*\mcO_S=\mcO_T$, the $T_k$ define thickenings %\footnote{Om: This thing worries me a bit! The point is, since $\pi: S_k\to T$ is only seen a map of topological spaces, $\pi_*\mcO_{S_k}$ not a sheaf of $\mcO_T$-algebras, but only sheaf of rings. So there is no induced morphism $T_k\to T$, right? CH: There is not supposed to be a $\mcO_T$-algebras structure! $T_k$ is not a subspace of $T$, it's the other way around. There is a $\mcO _{S_{k+1}}$ module structure on $\mcO _{S_{k}}$, which is given by the homomorphism $\pi_* \OO_{S_{k+1}}\to \pi_* \OO_{S_k}$.} 
of $T$  with underlying topological space $|T|$. The natural isomorphisms $H^0(\OO _{T_k})\cong H^0(\pi _* \OO_{S_{k}})\cong H^0(\OO _{S_k})$ induce morphisms of complex spaces $\pi _k :S_k\to T_k$ (e.g. see \cite[Exercise II.2.4]{Har77}). The induced composite morphisms $S_k\to T_k\injective T_{k+1}$ factors as $S_k\to S_{k+1}\to T_{k+1}$. We would like to emphasize here that $\pi_k:S_k\to T_k$ are morphisms of non-reduced complex spaces for all $k\geq 1$, not just maps of topological spaces.
Fix $\ell\>1$ such that $A=\ell S=S_\ell$ is Cartier and let $\pi _A:A\to T_\ell$ be the induced morphism (here $\pi_A=\pi _\ell$). 
%Since $\pi _* \OO_{A}\to \pi _* \OO_S$ is surjective, then we have an inclusion $T\to T_l$ such that the induced morphism $S\to T\to T_l$ factors as $S\to A\to T_l$. 

Consider the following commutative diagram 
\begin{equation}
    \xymatrixcolsep{3pc}\xymatrixrowsep{3pc}\xymatrix{
S=A_{\red}\ar[r]^{\vphi}\ar[d]_{\pi} & A\ar[d]^{\pi_A}\\
T=(T_\ell)_{\red}\ar[r]^{\psi} & T_\ell.
    }
\end{equation}
We claim that $\mcO_A(-A)$ is $\pi_A$-ample. Indeed, since $\vphi^*\mcO_A(-A)=\mcO_S(-\ell S|_S)$ is $\pi$-ample, and $\psi$ is a proper finite morphism, $\vphi^*\mcO_A(-A)$ is $(\pi_A\circ\vphi)$-ample. Since $\vphi:A_{\red}\to A$ is the reduction of nilpotent elements of $\mcO_A$, from an argument similar to the proof of \cite[Proposition 1.2.16(i)]{Laz04a} it follows that $\mcO_A(-A)$ is $\pi_A$-ample. 

Arguing as in the proof of Lemma \ref{l-cont}, we will show the following. 
\begin{claim}\label{c-van+}
    Replacing $A$ by a multiple, we may assume that $R^i\pi  _{A,*}\OO _{A}(-kA)=0$ for all $i,k\>1$.
\end{claim} 

By Theorem \ref{t-fuj74}, it then follows that there exists a proper bimeromorphic morphism $p:X\to Z$ such that $p|_{X\setminus A}$ is an isomorphism and $p|_{A}=\pi _A$.
Finally, $p|_S=(p|_A)|_S=\pi _A|_S=\pi$.

\begin{proof}[Proof of Claim \ref{c-van}]To check the claim, note that the vanishing $R^1\pi _* \mathcal E_k =0$ can be checked locally on the base $T$, and hence we may assume that $T$ is a relatively compact Stein space. Since $\pi$ is a projective morphism,  it follows  that $\mathcal E_k=\OO _S(G_k)$ for some Weil divisor $G_k$ on $S$. 
By Proposition \ref{p-hw}, $G_k \sim _\Q -kS|_S-\Delta_k$, where $\Delta_k\>0$ is an effective $\mbQ$-divisor on $S$ such that $0\leq \Delta _k\leq B_S$. (Note that with the notation of Proposition \ref{p-hw}, $\Delta _k=B_S-\Delta _S$.)

If every component of $B_S$ is $\mbQ$-Cartier, then $\Delta_k$ is $\mbQ$-Cartier and so is $G_k$. Moreover, in this case we can also write
\[ G_k\sim _\Q K_{ S}+B_S-\Delta _k+\bbeta _S-(K_S+B_S+\bbeta _S)-kS|_S\]
so that $(S, (B_S-\Delta _k)+\bbeta _S)$ is gklt (by Lemma \ref{lem:normality-of-plt-center}) and $-(K_S+B_S+\bbeta _S)-kS|_S$ is relatively K\"ahler over $T$, and hence the claim now follows from Lemma \ref{l-wdvan}.

Otherwise,  since $(S, B_S+\bbeta_S)$ is a relatively compact generalized klt pair, by Lemma \ref{lem:partial-q-factorization} there is  a projective small bimeromorphic morphsim $q:\tilde S\to S$ such that every component of $q^{-1}_*B_S$ is $\mbQ$-Cartier. Let $\tilde G_k:=q^{-1}_*G_k$, $\tilde B_S:=q^{-1}_*B_S $, $ \tilde \Delta _k:=q^{-1}_*\Delta _k$ so that $K_{\tilde S}+\tilde B_S-\tilde \Delta _k+\bbeta _{\tilde S} =q^*(K_S+B_S-\Delta _k+\bbeta _S)$
and $\tilde{\mathcal E} _k :=\OO _{\tilde S}(\tilde G_k)$. Then
\[\tilde G_k\sim _\Q K_{\tilde S}+\tilde B_S-\tilde \Delta _k+\bbeta _{\tilde S}-q^*(K_S+B_S+\bbeta _S+kS|_S),\] where 
$(\tilde  S, (\tilde B_S-\tilde \Delta _k)+\bbeta _{\tilde S})$ is  gklt and $-q^*(K_S+B_S+\bbeta _S+kS|_S)$ is relatively nef and big over $T$ (hence also over $S$).
By Lemma \ref{l-wdvan}, $R^iq_*\OO _{\tilde S}(\tilde G_k)=0$ and $R^i(\pi \circ q )_*\OO _{\tilde S}(\tilde G_k)=0$ for all $i>0$.
Since $q$ is small, then $q _*\OO _{\tilde S}(\tilde G_k)=\OO _S(G_k)$, and so by an easy spectral sequence argument we have \[R^i\pi_*\OO _S(G_k) =R^i\pi_*( q _*\OO _{\tilde S}(\tilde G_k))=0\qquad \mbox{ for all } \ i>0.\] 
%Since $\pi$ is projective, we then have that $\mathcal E _k=\OO_S(G_k)$ where $G_k$ is a $\Q$-Cartier Weil divisor $\Q$-linearly equivalent to $-kS|_S-\Delta _k$. From the proof of \cite[Prop 3.1]{HW19}, it follows that $\OO _S(G_k)=\nu _*\OO _{S'}(G'_k)$ where \[G'_k\sim K_{S'}+\lceil \nu ^*(-(k+1)S-K_S-\Delta)\rceil\sim _\Q \] where  If $\beta =0$, then the required vanishing follows immediately from \cite[Theorem 5.2]{Fuj22}. In the general case, let $\nu:(S',B_{S'}+\beta _{S'})\to (S,B_S+\beta)$ be a log resolution of the generalized klt pair $(S,B_S+\beta)$.
\end{proof}
\begin{proof}[Proof of Claim \ref{c-van+}] 
Since the statement is local over $T_\ell$, replacing $T_\ell$ by a Stein open subset $U$ and $X$ by a neighborhood of $\pi _A^{-1}(U)$, we may assume that $T_\ell$ is Stein and the statement is equivalent to $H^i(A,\OO _A(-kA))=0$ for all $k>0$. Since $\OO _A(-A)$ is ample, by Serre vanishing, there is an integer $k_0>0$ such that $H^i(A,\OO _A(-kA))=0$ for all $k\geq k_0$ and $i>0$.
For $j\>1$, consider the short exact sequence
  \[0\to \OO _{A}(-(k+j)A)\to \OO _{(j+1)A}(-kA)\to \OO _{jA}(-kA) \to 0.\]
  Proceeding by induction we get that $H^i(\OO _{(j+1)A}(-kA))=0$ for all $k\geq k_0$, $j\>1$, and $i>0$. Replacing $A$ by $k_0A$, the claim now follows.
 \end{proof}
\end{proof}

\begin{remark}\label{r-bpf}
   Assume that we are in the settings of  Theorem \ref{t-ext}, and additionally assume that $X$ is a K\"ahler space. Then the morphism $p$ is projective (since $-S$ is relatively ample over $Z$), and $Z$ is in Fujiki's class $\mathcal C$. Working locally over $Z$, we may pick a K\"ahler form $\omega$ such that $\omega \equiv_Z-(K_X+S+B+\bbeta _X)$.
    Let $\nu :X'\to X$ be a log resolution of $(X,S+B+\bbeta)$ and write $K_{X'}+S'+B'+\bbeta _{X'}=\nu ^*(K_X+S+B+\bbeta _X)$, where $[\bbeta _{X'}]\in H^{1,1}_{\BC}(X')$ is nef. Then $\bbeta _{X'}+\nu ^* \omega $ is numerically equivalent (over $Z$) to a nef and big  $\mbR$-divisor $G'\geq 0$ such that $K_X+S+B+G=\nu _*(K_{X'}+S'+B'+G')$ is plt. By the Base-point free theorem (see \cite[Theorem 8.1]{Fuj22}) $K_Z+S_Z+B_Z+G_Z=\pi _*(K_X+S+B+G)$ is plt and in particular has rational singularities. Then by \cite[Lemma 3.4]{HP16} and \cite[Lemma 8.7]{DHP22}, the image of $p ^*: H^{1,1}_{\BC}(Z)\to H^{1,1}_{\BC}(X)$ is given by 
    \[\Im(p^*)=\{\alpha \in H^{1,1}_{\BC}(X)\ \;:\; \alpha \cdot C=0,\ \mbox{ for all } C\subset X \mbox{ curves s.t. } p(C)=\pt\}.\]
\end{remark}
The next result shows that flips for generalized pairs exist in all dimensions.
\begin{theorem}\label{t-flip}
Let $(X,B+\bbeta)$ be a $\mbQ$-factorial gdlt pair, where $X$ is a compact analytic variety belonging to  Fujiki's class $\mcC$, and $f:X\to Z$ a flipping contraction. Then the flip of $f$ exists. 
\end{theorem}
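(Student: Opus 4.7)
The plan is to reduce the theorem to the known existence of flips for projective morphisms in the analytic MMP, as established in \cite[Theorem 1.8]{Fuj22} and \cite[Theorem 1.3]{DHP22}. The crucial observation, already highlighted in the introduction of this paper, is that a flipping contraction $f:X\to Z$ is by definition a projective morphism, so the relative projective MMP machinery in the analytic category is directly applicable, irrespective of whether $X$ itself carries a K\"ahler form.

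I will reduce from the gdlt setting to the gklt setting by a standard perturbation. Since $f$ is projective, there exists an $f$-ample $\mbQ$-Cartier divisor $H$ on $X$. For a sufficiently small $\epsilon>0$ and a sufficiently large integer $N$, consider the perturbed generalized pair $(X, B_\epsilon+\bbeta)$ with
\[ B_\epsilon := (1-\epsilon)B+\epsilon H', \]
where $H'\>0$ is a general effective $\mbQ$-divisor in the $f$-ample class $[H+N\lrd B\rrd]$, chosen so that $\lrd B_\epsilon\rrd=0$ and $\Supp(H')$ meets no gdlt center of $(X, B+\bbeta)$. By openness of the $f$-ample cone, $-(K_X+B_\epsilon+\bbeta_X)$ remains $f$-ample for $\epsilon$ small, so $(X, B_\epsilon+\bbeta)$ is a $\mbQ$-factorial gklt generalized pair sharing the flipping contraction $f$ with $(X, B+\bbeta)$.

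Next I apply the projective MMP results for generalized pairs to produce the flip $f^+_\epsilon : X^+ \to Z$ of $(X, B_\epsilon+\bbeta)$. Since $(X, B+\bbeta)$ and its perturbation share the same flipping locus $\Ex(f)$, and since the flip is characterized as the relative log canonical model, equivalently as $\Proj_Z$ of the associated finitely generated graded algebra, the same small bimeromorphic model $X^+$ serves as the flip of the original pair $(X, B+\bbeta)$. The $f^+$-ampleness of $K_{X^+}+B^++\bbeta^+_{X^+}$ then follows by letting $\epsilon\to 0$ and invoking the openness of the $f^+$-ample cone, together with the fact that the strict transforms of $B$ and of $B_\epsilon$ agree on the open set where $X\dasharrow X^+$ is an isomorphism, which is a big open subset of $X^+$.

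The main obstacle is to ensure that the projective MMP results of \cite{Fuj22} and \cite{DHP22}, formulated primarily for klt pairs, carry over to the gklt generalized pair setting needed here. This extension is by now standard: throughout the construction of the flip, the nef part $\bbeta$ plays the role of an auxiliary nef $(1,1)$ class that does not obstruct the key relative vanishing and finite generation statements, provided one uses the adjunction for generalized pairs set up in Subsection \ref{subsec:adjunction} and the relative Kawamata--Viehweg vanishing for generalized pairs (Lemma \ref{l-wdvan}). Note that termination of flips in this ambient dimension is not required, as only a single flip is constructed.
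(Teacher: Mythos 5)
There is a genuine gap, and it sits exactly where you wrote ``this extension is by now standard.'' The results you invoke, \cite[Theorem 1.8]{Fuj22} and \cite[Theorem 1.3]{DHP22}, produce relative log canonical models for \emph{klt pairs with $\mbR$-divisor boundaries}; they do not apply to a generalized pair whose nef part $\bbeta$ is a transcendental $(1,1)$ class. The construction of the flip goes through finite generation and a $\Proj$ (or relative log canonical model) construction, and for that one needs $K_X+B+\bbeta_X$ to be represented, at least locally over $Z$, by an honest $\mbR$-Cartier divisor; relative Kawamata--Viehweg vanishing for generalized pairs (Lemma \ref{l-wdvan}) does not substitute for this, since vanishing is not the issue --- the absence of any graded algebra attached to a transcendental class is. The whole content of the paper's proof is precisely this reduction: after scaling $B$ by $(1-\epsilon)$ to get gklt (your extra perturbation by an $f$-ample $H'$ is unnecessary), the paper splits into two cases using $\rho(X/Z)=1$. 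If $K_X+B\not\equiv_Z 0$, then $\bbeta_X\equiv_Z\lambda(K_X+B)$ is $\mbR$-Cartier, so over a relatively compact Stein open $W\subset Z$ satisfying Property ({\bf P}) one can replace $\bbeta$ by a divisor $\Delta_W\geq 0$ keeping the pair klt, apply the projective klt results over $W$, and glue the local relative log canonical models by uniqueness. If $K_X+B\equiv_Z 0$, the replacement of $\bbeta_{X'}$ by an $\mbR$-divisor locally over $Z$ is obtained by a cohomological argument: $Z$ has rational singularities, $H^{1,1}_{\rm BC}(X')\cong H^1(\mathcal H_{X'})$, and \cite[Lemma 12.1.1]{KM92} produces, over a neighborhood $W$ of each $z\in Z$, a divisor $D'_W$ with $\bbeta_{X'}|_{X'_W}\equiv_W D'_W$. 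None of this is present in your argument, and without it the citation of the projective MMP is vacuous for generalized pairs.

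A secondary, related omission: the cited relative results are applied by the paper only over relatively compact Stein opens of $Z$ (with the flip then obtained by gluing the local models, which agree by uniqueness of relative log canonical models); your proposal applies them globally over the compact base $Z$ and speaks of ``$\Proj_Z$ of the associated finitely generated graded algebra'' without addressing either the localization or what that algebra would even be when the nef part is transcendental. By contrast, your final step --- that a flip of the perturbed pair is also a flip of the original pair because $\rho(X^+/Z)=1$ forces the two adjoint classes to be proportional over $Z$ --- is essentially fine, but it does not repair the missing core of the construction.
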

Recall that a flipping contraction $f:X\to Z$ is a small bimeromorphic morphism such that $\rho (X/Z)=1$ and $-(K_X+B+\bbeta _X)$ is K\"ahler over $Z$. The corresponding  flip (if it exists) is a small bimeromorphic morphism $f^+:X^+\to Z$ such that $\rho (X^+/Z)=1$ and $K_{X^+}+B^++\bbeta _{X^+}$ is is K\"ahler over $Z$, where $B^+$ is the strict transform of $B$.
\begin{proof} 
Replacing $B$ by $(1-\epsilon)B$ for some $0<\epsilon<1$, we may assume that $(X,B+\bbeta )$ is gklt. Let $\nu :X'\to X$ be a log resolution of $(X,B+\bbeta )$ such that $f\circ\nu$ is a projective morphism and $K_{X'}+B'+\bbeta_{X'}=\nu^*(K_X+B+\bbeta_X)$.
Let $\bbeta _{X'}+E=\nu ^*\bbeta _X$, then $E\geq 0$ is an effective $\mbR$-divisor by the negativity lemma.
Since $\rho (X/Z)=1$, it follows that if $K_X+B\not \equiv _Z0$, then $X\to Z$ is projective and $\bbeta _X\equiv _Z \lambda (K_X+B)$
for some $\lambda \in \mathbb R$. For any point $z\in Z$ we fix a neighborhood $z\in W\subset Z$ such that $W$ is relatively compact, Stein and satisfies Property ({\bf P}) (see \cite[Definition 2.17 and Remark 2.18]{DHY23}). Let $X_W=f^{-1}(W)$, then we may assume that $K_{X_W}$
is a $\Q$-Cartier divisor and we let $D:=\lambda (K_{X_W}+B_W)$ where $B_W=B|_{X_W}$. We also let $X'_W=\nu ^{-1}(X_W)$, $B'_W=B'|_{X'_W}$, $E_W=E|_{X'_W}$ and $D':=-\nu _W ^*D-E_W$. Then $D'\equiv_W \bbeta _{X'_W}$ is nef and big over $W$. Since $D'\equiv_W \bbeta _{X'_W}$ is nef and big over $W$, we may assume that $D'\equiv_W \Delta' _W\geq 0$ so that $(X'_W,B'_W+\Delta '_W)$ is sub-klt, and hence $(X_W,B_W+\Delta _W)$ is klt, where $B_W+\Delta_W:=\nu _{W,*}(B'_W+\Delta '_W)$. But then the relative log canonical model $(X^+_W,B^+_W+\Delta^+ _W)$ of $(X_W,B_W+\Delta _W)$ exists by \cite[Theorem 1.3]{DHP22} and \cite[Theorem 1.8]{Fuj22}, and it is the relative log canonical model of $(X_W,B_W+\bbeta _W)$. Since these relative log canonical models are unique by \cite[Lemma 2.12(3)]{DHY23}, they glue together, and we obtain a relative log canonical model $(X^+,B^++\bbeta )$ of $(X,B+\bbeta )$ over $Z$.

Suppose now that $K_X+B \equiv _Z0$. Then $K_Z+B_Z:=f_*(K_X+B)$ is klt and $Z$ has rational singularities.  Fix $z\in Z$, we will construct the flip locally over a neighborhood of $z$. We may assume that a resolution $\nu:X'\to X$ is projective over $Z$ and $X'_z$  is a divisor with simple normal crossing support. Let $f'=f\circ \nu$. Since $Z$ has rational singularities, from the short exact sequence  (\cite[\S 3, eqn. (2)]{HP16})
\[0\to \R \to \OO _{X'}\to \mathcal H _{X'}\to 0\]
it follows that $R^1f'_*\mathcal H _{X'}\to R^2f'_*\R$ is an isomorphism. Note that $ R^2f'_*\R\cong R^2f'_*\mbZ\otimes_{\mbZ}\mbR$ by the universal coefficient theorem, as $f'$ is proper. Now consider the class $[\bbeta  _{X'}]\in H^{1,1}_{\rm BC}(X')\cong H^1(
\mathcal H _{X'})$. By \cite[Lemma 12.1.1]{KM92}, there is a neighborhood $z\in W\subset Z$ and a $\R$-divisor $D'_W$ on $X'_W$ such that  the images of $\bbeta'_W:=\bbeta_{X'}|_{X'_W}$ and $D'_W$ in $R^2f'_*\R$ coincide. But then $\bbeta '_W\equiv _W D'_W$. Shrinking $W$, we may assume that it is relatively compact, Stein and satisfies Property ({\bf P}). We now argue as above and construct the flip $X\dasharrow X^+$ locally over $Z$ and then glue these together.\end{proof}
\begin{remark} Even though it is not clear that the flipping and flipped contractions are projective over $Z$, the proof shows that they are locally projective over $Z$. %In fact, we expect them to be projective over $Z$. Indeed, assuming the generalized mmp, we pick a projective resolution $X'\to Z$ and run the relative mmp with scaling for a very general K\"ahler form on $X'$ following \cite{Kol21}. This should end with $X'$ and show that $X'$ is projective over $Z$.  
\end{remark}
We will also need the following technical alternate version of Theorem \ref{t-flip}.
\begin{theorem}\label{t-flip1}
Let $f:X\to Z$ be a small proper bimeromorphic morphism of normal relatively compact analytic varieties in Fujiki's class $\mcC$. Let $(X,B+\bbeta)$ be a gdlt pair,  $\lfloor B\rfloor$ is $\Q$-Cartier, and $K_X+B+\bbeta _X\equiv _Z -G$, where $G$ is an $\R$-Cartier divisor. Then there exists $f^+:X^+\to Z$ a small bimeromorphic morphism such that $K_{X^+}+B^++\bbeta _{X^+}$ is K\"ahler over $Z$. We say that $f'$ is the flip of $f$. 
\end{theorem}
\begin{proof} 
Replacing $B$ by $B-\epsilon \lfloor B\rfloor$ for some $0<\epsilon<1$, we may assume that $(X,B+\bbeta )$ is gklt.
By standard arguments, it is easy to see that it suffices to construct the flip $f'$ locally over $Z$ and hence we may assume that $Z$ is relatively compact and Stein.

Let $\nu :X'\to X$ be a log resolution of $(X,B+\bbeta )$ such that $f\circ\nu$ is a projective morphism and $K_{X'}+B'+\bbeta_{X'}=\nu^*(K_X+B+\bbeta_X)$.
Since $\bbeta _{X'}\equiv _Z -(K_{X'}+B'+\nu ^*G)$ is nef and big (over $Z$), then $-(K_{X'}+B'+\nu ^*G)\equiv _Z \Delta '$ where 
$(X',B'+\Delta ')$ is sub-klt. If $\Delta =\nu _*\Delta '$, then $(X,B+\Delta)$ is klt and it suffices to set $X^+:={\rm Proj}_Z R(X, K_X+B+\Delta )$ by \cite[Theorem 1.3]{DHP22}.

\end{proof}
%%%%%%%%%%%%%%%%%%%%%%%%%%%%%%%

\begin{proof}[Proof of Theorem \ref{t-pl}]
This follows immediately from Theorems \ref{t-ext} and \ref{t-flip}.
\end{proof}

%%%%%%%%%%%%%%%%%%%%%%%%%%%%%%%%%%%%%%%%%%%%%%%%%%%%%%%%%%%%%%%%%%%%%%
%%%%%%%%%%%%%%%%%%%%%%%%%%%%%%%%%%%%%%%%%%%%%%%%%%%%%%%%%%%%%%%%%%%%%%

\section{Existence of flips and divisorial contractions in dimension 3}
Suppose that $X$ is a normal compact analytic variety in Fujiki's class $\mcC$, and $\alpha\in H^{1,1}_{\rm BC}(X)$ is nef and big but not K\"ahler, then by \cite[Theorem 2.30]{DHP22}, ${\rm Null}(\alpha)$ is non-empty. Recall that ${\rm Null}(\alpha)$ is the union of (positive dimensional) analytic subvarieties $Z\subset X$ such that $\alpha ^{\dim Z}\cdot Z=0$. If $X$ is K\"ahler, then from Lemma \ref{lem:as-locus-is-analytic} and Theorem \ref{thm:non-kahler-equal-null} it follows that $\Null(\alpha)$ is a closed analytic subset of $X$.
Suppose that $\alpha ^\perp \cap \overline{\rm NA}(X)=R$ is an extremal ray, then we say that $R$ is divisorial if $\dim {\rm Null}(\alpha)=\dim X-1$ (i.e. ${\rm Null}(\alpha)$ contains a divisor) and $R$ is of flipping type if $\dim {\rm Null}(\alpha)<\dim X-1$ (i.e. ${\rm Null}(\alpha)$ contains no divisors). 
If $\alpha =[K_X+B+\bbeta _X+\omega]$, where $(X,B+\bbeta )$ is a glc pair and $\omega$ is K\"ahler form, then it is expected that ${\rm Null}(\alpha)$ is given a by the union of all curves $C$ such that $[C]\in R$, i.e. $\alpha\cdot C=0$. 

The result below shows that flipping contractions and flips exist in dimension $3$ for \emph{strongly $\mbQ$-factorial} pairs. 

\begin{definition}\label{def:strongly-q-factorial}
    Let $X$ be a normal compact analytic variety. Then $X$ is called \emph{strongly $\mbQ$-factorial} if for every reflexive sheaf $\mcL$ of rank 1, there is a positive integer $m\in\mbZ^+$ such that $\mcL^{[m]}:=(\mcL^{\otimes m})^{**}$ is a line bundle.  
\end{definition}
Note that complex manifolds are strongly $\mbQ$-factorial, and this property is preserved under the steps of MMP, for more details see Lemmas 2.3--2.5 of \cite{DH20}.

\begin{theorem}\label{t-flipcont}  
Let $(X,B)$ be a strongly $\mbQ$-factorial compact K\"ahler 3-fold klt pair such that 
\begin{enumerate}
   \item $K_X+B$ is pseudo-effective,
    \item $\alpha =[K_X+B+\beta]$ is nef and big for some K\"ahler form $\beta$, and
\item $\alpha ^\perp \cap \overline{\rm NA}(X)=R$ is an extremal ray of flipping type.
\end{enumerate} 
Then the flipping contraction $f:X\to Z$ and the flip $X\dasharrow X^+$ exist, and there is a K\"ahler class $\alpha _Z$ on $Z$ such that $\alpha=f^*\alpha _Z$.
\end{theorem}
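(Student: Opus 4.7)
My plan is to reduce the construction of the flipping contraction $f\colon X\to Z$ to the pl-contraction theorem \ref{t-ext} by extracting an auxiliary prime divisor $S$ over a connected component $C$ of $\Null(\alpha)$ and applying the pl-contraction on the extraction; iterating over the finitely many connected components of $\Null(\alpha)$ yields the full contraction. Since $R$ is of flipping type, $\Null(\alpha)$ is purely one-dimensional, a finite union of compact irreducible curves with numerical class in $R$.

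The first step is to construct a projective bimeromorphic morphism $\mu\colon Y\to X$ from a $\mathbb{Q}$-factorial variety $Y$ in Fujiki's class $\mathcal{C}$, extracting a single $\mu$-exceptional prime divisor $S$ with $\mu(S)=C$, such that $(Y, S+B_Y)$ is plt with $K_Y+S+B_Y\equiv \mu^*(K_X+B)+(1+a)S$ for some $a\in[0,1)$, and $-S$ is $\mu$-ample. This is achieved by taking a log resolution of $(X, B+C)$, designating an exceptional divisor lying over $C$ as $S$, and running a relative projective MMP over $X$ that contracts every other exceptional divisor. The required relative projective MMP is available by \cite{Fuj22} and \cite{DHP22}, and termination follows from Theorem \ref{thm:termination}; the plt property is automatic since $(X, B)$ is klt.

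Setting $\alpha_Y:=\mu^*\alpha$, this class is nef and big on $Y$, and $\alpha_Y|_{S^\nu}\equiv 0$: any curve $C'\subset S^\nu$ is either contracted by $\mu|_{S^\nu}$ (so $\mu_*C'=0$) or dominates $C$ (so $\alpha\cdot\mu_*C'=0$ as $[C]\in R$). Adjunction (Subsection \ref{subsec:adjunction}) yields a gklt generalized pair $(S, B_S+\bbeta_S)$ on the normal surface $S$, and the identity $(K_Y+S+B_Y+\bbeta_Y)|_S\equiv(1+a)S|_S$ shows that the hypothesis on $-(K_Y+S+B_Y+\bbeta_Y)|_S$ in Theorem \ref{t-ext} reduces to absolute ampleness of $-S|_S$. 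Using the dimension-$2$ MMP and base point free theorem from \cite{DHY23}, combined with an appropriate perturbation or further birational modification of $S$, I would arrange a morphism $\pi\colon S\to T$ onto a point with both $-(K_Y+S+B_Y+\bbeta_Y)|_S$ and $-S|_S$ $\pi$-ample. Theorem \ref{t-ext} then provides a projective bimeromorphic morphism $p\colon Y\to W$ with $p|_S=\pi$ and $p|_{Y\setminus S}$ an isomorphism. Since every fiber of $\mu$ lies in $S$ and is contracted by $p$, the rigidity lemma \cite[Lemma 4.1.13]{BS95} yields a unique factorization $p=f\circ\mu$ with $f\colon X\to W$ bimeromorphic, contracting $C$ to $\pi(S)$. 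Iterating over connected components of $\Null(\alpha)$ produces the flipping contraction $f\colon X\to Z$.

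Finally, setting $\alpha_Z:=f_*\alpha$, one has $\alpha=f^*\alpha_Z$; the class $\alpha_Z$ is nef (from $\alpha$), big (by projection formula), and strictly positive on every compact curve of $Z$ (such curves correspond to curves of $X$ outside $\Null(\alpha)$, where $\alpha$ is strictly positive), hence Kähler by \cite[Theorem 2.30]{DHP22}. The flip $f^+\colon X^+\to Z$ then exists by Theorem \ref{t-flip}, since $f$ is locally projective over $Z$. The principal obstacle is the combination of the extraction step with arranging the absolute ampleness of $-S|_S$ on the extracted surface: $\mu$-ampleness of $-S$ is straightforward, but promoting it to absolute ampleness (needed for Theorem \ref{t-ext} with $T$ a point) is the main technical heart, and may require a carefully tailored sequence of blowups along $C$ and its transforms together with a surface MMP analysis of $S$ via \cite{DHY23}.
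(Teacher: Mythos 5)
Your plan hinges on one step that is precisely the content of the theorem and is never proved: producing, on the extraction $\mu\colon Y\to X$ of a single divisor $S$ over the flipping curve $C$, a morphism $\pi\colon S\to T$ with $T$ a \emph{point} such that $-S|_S$ is ample (equivalently, after your reduction, such that $-(K_Y+S+B_Y+\bbeta_Y)|_S$ is K\"ahler). A relative MMP over $X$ can only give you $\mu$-ampleness of $-S$, i.e.\ positivity of $-S|_S$ on the $\mu$-vertical curves of the fibration $S\to C$; it says nothing about curves of $S$ that dominate $C$, and it is exactly the positivity of $-S|_S$ on those horizontal curves that encodes the contractibility of a neighbourhood of $C$ to a point. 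In other words, the existence of such an $S$ with $-S|_S$ ample is essentially a Grauert-type contractibility criterion for the flipping contraction itself (via Lemma \ref{l-cont}), so deferring it to ``an appropriate perturbation or further birational modification'' and ``a carefully tailored sequence of blowups'' leaves the proof circular at its core. Note that Theorem \ref{t-ext} is designed to be applied with $\pi\colon S\to T$ an \emph{already given} contraction; in dimension $3$ the paper obtains such $\pi$ from the surface cone/contraction theorem applied to a $(K_{S'}+\Delta_{S'})$-negative extremal ray on a component $S'$ of $\lfloor\Delta'\rfloor$ of a dlt modification, and there $T$ is whatever the surface contraction gives (a curve, a birational image, or a point) --- it is never forced to be a point at the outset.

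For comparison, the paper's proof takes a genuinely different route that avoids your missing step: it passes to a log resolution $\nu\colon X'\to X$ with $\nu^*\alpha=\omega'+G'$ ($\omega'$ K\"ahler, $G'$ exceptional and supported on $\lfloor\Delta'\rfloor$), runs an $\alpha'$-trivial $(K_{X'}+\Delta')$-MMP in which each step is produced by Theorem \ref{t-ext} applied to surface extremal contractions on components of $\lfloor\Delta'\rfloor$ (plus Theorem \ref{t-flip} and termination, Theorem \ref{thm:termination}), shows the output $X^+$ differs from $X$ by a small map $\psi\colon X\dasharrow X^+$ which is an isomorphism over $X\setminus\Null(\alpha)$, and only then constructs $Z$: on the graph of $\psi$ it exhibits a divisor $E$ with $-E|_{\operatorname{Supp}(E)}$ ample (using the finite map of each component to $C_i\times C_i^+$ and the opposite signs of $\eta=\alpha+\delta(K_X+B)$ on $R$ and $R^+$), contracts it to points by Lemma \ref{l-cont}, and recovers $f\colon X\to Z$ and $f^+\colon X^+\to Z$ by rigidity. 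So the flipped variety is built \emph{before} the flipping contraction, which is how the ``absolute'' positivity you need is ultimately manufactured. Separately, even granting your contraction, you would still need to verify $\rho(X/Z)=1$, that $f$ contracts exactly the curves in $R$, and the positivity of $\alpha_Z$ on all positive-dimensional subvarieties (not just curves) before invoking \cite[Theorem 2.30]{DHP22}; these are the routine parts, but they are not automatic from your construction as written.
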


\begin{proof}
Let $\nu:X'\to X$ be a log resolution of $(X, B)$. We may assume that $\alpha ':=\nu ^*\alpha =\omega '+G'$, where $G'\geq 0$ is $\nu$-exceptional and $\omega '$ is a K\"ahler class (see the arguments in the proof of Proposition \ref{p-1}). Let $\bomega':=\overline{\omega}'$ (see Subsection \ref{d-gpair}), and $\Delta':=\nu ^{-1}_*B+{\rm Ex}(\nu)$, then $(X',\Delta' )$ is dlt and we may write  $K_{X'}+\Delta '=\nu ^*(K_X+B)+E$, where $E\geq 0$ and $\Supp(E)=\Ex(\nu)$.
Note that $K_{X'}+\Delta '+t\omega '$ is K\"ahler for $t\gg 0$.
Thus
 the following hypothesis are satisfied:
\begin{enumerate}
\item $\alpha '$ is nef and $\omega '$ is a modified K\"ahler class, 
\item $\alpha '=\omega '+G'$, where $G'\geq 0$ is supported on $\lfloor \Delta '\rfloor$,
    \item $K_{X'}+\Delta '+t\omega '+a\alpha '$ is nef for some $t>0$ and $a>0$, 
    \item $(X',\Delta '+t\bomega ')$ is gdlt, and
    \item $X'$ is strongly $\mbQ$-factorial.
\end{enumerate}
Note that even though $\omega'$ is a K\"ahler class here, in the rest of proof we will only use the modified K\"ahler property of $\omega'$, which is preserved by steps of the minimal model program. %because we will run a MMP towards the end of this proof and after first contraction $\omega'$ will become modified K\"ahler but not necessarily K\"ahler.\\

If $S'$ is any component of $\lfloor \Delta '\rfloor$, then by adjunction we can write 
\[K_{S'}+\Delta _{S'}+t\omega' _{S'}=(K_{X'}+\Delta '+t\bomega'_{X'})|_{S'},\]
where $\Delta _{S'}:={\rm Diff}_{S'}(\Delta ')$.
Then $(S',\Delta _{S'})$ is a dlt surface and $\omega' _{S'}={\bomega'_{X'}}|_{S'}$ %$\omega' _{S'}=\overline{({\omega'}|_{S'})}$. Let $\omega_{S'}$ denote the trace of $\bomega'_{S'}$ on $S'$; then $\omega_{S'}$ 
is a big class, since $\bomega'_{X'}$ is a modified K\"ahler class. 

Let $\alpha _{S'}:=\alpha' |_{S'}$ so that $\alpha _{S'}$ is nef and define
 \[\tau _{S'}:={\rm inf}\{s\geq 0\;|\; K_{S'}+\Delta _{S'}+s\omega _{S'}+a\alpha _{S'}\ {\rm is\ nef\ for\ some\ }a>0\}.\]
 \begin{claim}\label{c-S}
     If $\tau _{S'}>0$, then $K_{S'}+\Delta _{S'}+\tau _{S'}\omega _{S'}+a'\alpha _{S'}$ is nef for $a'\gg 0$ and there is a $(K_{S'}+\Delta _{S'})$-negative extremal ray $R_{S'}$ of $\overline{\rm NA}(S')$ such that
     \[(K_{S'}+\Delta _{S'}+\tau _{S'}\omega _{S'})\cdot R_{S'}=\alpha _{S'} \cdot R_{S'}=0.\]
 \end{claim}
 \begin{proof}
     Since $\omega _{S'}$ is big and $\tau _{S'}>0$, then there are finitely many $(K_{S'}+\Delta _{S'}+\frac {\tau _{S'}}2\omega _{S'})$-negative extremal rays in $\overline{\rm NA}(S')$ (see \cite[Corollary 2.32]{DHY23}). Let $\Sigma _1,\ldots , \Sigma _k$ be curves that span those rays % which are also $\alpha _{S'}$-trivial. Suppose that $\Sigma _i\cdot \alpha _{S'}=0$ for $1\leq i\leq r$ and    $\Sigma _i\cdot \alpha _{S'}>0$ for $ $r+1\leq i \leq k$.
     and let \[\Lambda:=\{i\;:\; \alpha_{S'}\cdot \Sigma_i=0 \mbox{ for some } 1\<i\<k\}.\] We define
     \[\sigma _{S'}:={\rm min}\left\{s\geq \frac{\tau _{S'}}2\;|\;(K_{S'}+\Delta _{S'}+s\omega _{S'})\cdot \Sigma _i\geq 0\mbox{ for all } i\in\Lambda\right\}.\] 
    If $\Lambda=\emptyset$, i.e. $\alpha _{S'}\cdot \Sigma _i> 0$ for $i=1,\ldots, k$, then we define $\sigma _{S'}:=\frac{\tau _{S'}}2$.
     % \footnote{Om: This does not make sense, but we do need to give an argument what happens when $\alpha_{S'}\cdot\Sigma_i>0$ for all $i\>1$, i.e. $\Lambda=\emptyset$. Once we have this argument, I think the rest of the proof of this claim maybe shortened by noting that $\omega_{S'}$ is a nef class by \cite[Cor. 2.27]{DHY23}, so $\Sigma_i$'s are all $K_{S'}+\Delta_{S'}$-negative curves. CH: I think it makes sense. Wee are defining it this way if $\alpha _{S'}\cdot \Sigma _i\ne 0$ for $i=1,\ldots, k$ and then show this is impossible.}.
     Clearly 
     $\sigma _{S'}\leq \tau _{S'}$ and if $\sigma _{S'}= \tau _{S'}$, then there is a curve $\Sigma _i$ for some $i\in\Lambda$ such that $(K_{S'}+\Delta _{S'}+\tau _{S'}\omega _{S'})\cdot \Sigma _i=\alpha _{S'}\cdot \Sigma _i=0$.
     % Since $K_{S'}+\Delta _{S'}+t\omega _{S'}+a\alpha_{S'}$ is nef and $t\geq \tau _{S'}$, 
     Since $\Sigma_i$ is $(K_{S'}+\Delta_{S'}+\frac{\tau_{S'}}{2}\omega_{S'})$-negative, it follows that $(K_{S'}+\Delta_{S'})\cdot\Sigma_i<0$ and the claim holds. Therefore, it suffices to show that $\sigma _{S'}< \tau _{S'}$ is impossible. To the contrary assume that $\sigma _{S'}< \tau _{S'}$. Then for $a'\gg 0$ we have \[(K_{S'}+\Delta _{S'}+\sigma _{S'}\omega _{S'}+a'\alpha _{S'})\cdot \Sigma _i\geq 0\qquad {\rm for}\ i=1,\ldots, k.\]
     For $0<\epsilon \ll 1$ and $a'\gg 0$, we claim that $K_{S'}+\Delta _{S'}+(\tau _{S'}-\epsilon )\omega _{S'}+a'\alpha _{S'}$ is non-negative on \[\overline {\rm NA}(S')_{K_{S'}+\Delta _{S'}+\frac {\tau _{S'}}2\omega _{S'}\geq 0}.\] 
     Indeed, $\alpha _{S'}$ is nef and $K_{S'}+\Delta _{S'}+(\tau _{S'}-\epsilon )\omega _{S'}+a'\alpha _{S'}$ is a convex linear combination of $K_{S'}+\Delta _{S'}+\frac {\tau _{S'}}2\omega _{S'}$ and the nef classes $K_{S'}+\Delta _{S'}+t\omega _{S'}+a\alpha _{S'}$ and $\alpha _{S'}$. More specifically,  if $\eta _{s,a}:= K_{S'}+\Delta _{S'}+s\omega _{S'}+a\alpha _{S'}$ and $\lambda:=\frac {t-\tau _{S'}+\epsilon}{t-\tau _{S'}/2}$, then we can write \[\eta_{\tau _{S'}-\epsilon ,a'} =\lambda  \eta_{{\tau _{S'}}/2,0}+(1-\lambda) \eta_{t,a} +(a'-(1-\lambda)a)\alpha _{S'}.\]
     Note that $\lambda >0$,  $1-\lambda=\frac {\tau _{S'}/2-\epsilon}{t-\tau _{S'}/2}>0$, since $0<\eps\ll 1$, and $a'-(1-\lambda)a \geq 0$ for $a'\gg 0$.
     Since  $K_{S'}+\Delta _{S'}+(\tau _{S'}-\epsilon )\omega _{S'}+a'\alpha _{S'}$ is non-negative on the $\Sigma _i$ for all $1\<i\<k$, it is non-negative on 
     \[ \overline {\rm NA}(S')=\overline {\rm NA}(S')_{K_{S'}+\Delta _{S'}+\frac {\tau _{S'}}2\omega _{S'}\geq 0}+\sum _{i=1}^k \mathbb R^+[\Sigma _i].\]
     Thus $K_{S'}+\Delta _{S'}+(\tau _{S'}-\epsilon )\omega _{S'}+a'\alpha _{S'}$ is nef, which is a contradiction to the definition of $\tau_{S'}$, and the claim follows.
     Note that  $K_{S'}+\Delta _{S'}+\tau _{S'}\omega _{S'}+a'\alpha _{S'}$ is nef for all $a'\gg 0$, as $\alpha_{S'}$ is nef.
 \end{proof}~\\

Let $\tau:={\rm max}\{\tau _{S'}\}$ as $S'$ runs through all the components of  $\lfloor \Delta '\rfloor$. Then $K_{S'}+\Delta _{S'}+\tau\omega _{S'}+a'\alpha _{S'}$ is nef for each component $S'$ of  $\lfloor \Delta '\rfloor$ and $a'\gg 0$, as $K_{S'}+\Delta_{S'}+t\omega_{S'}+a'\alpha_{S'}$ is nef and $\tau_{S'}\<\tau\<t$ for every component $S'$ of $\lrd\Delta'\rrd$.

\begin{claim} If $\tau >0$, then $K_{X'}+\Delta '+\tau \omega'+a\alpha'$ is nef for some $a>0$.
%We have $\tau ={\rm inf}\{ s\geq 0|K_{X'}+\Delta '+s\omega'+a\alpha'\ {\rm is\ nef\ for\ some\ }a>0\}$.
\end{claim}
\begin{proof}
    Suppose that $K_{X'}+\Delta '+\tau \omega'+a'\alpha'$ is not nef for a given $a'\gg 0$, then by \cite[Theorem 2.36 and Remark 2.37]{DHP22} there is a subvariety $Z'\subset X'$ such that $(K_{X'}+\Delta '+\tau \omega'+a'\alpha ')|_{Z'}$ is not pseudo-effective. %Clearly $\lambda \leq \tau <t$ and so $t-\lambda>0$. 
Since
\[K_{X'}+\Delta '+\tau \omega'+a'\alpha ' \equiv  K_{X'}+\Delta '+t \omega'+(a'-t+\tau)\alpha '+(t-\tau)G' \]
and $K_{X'}+\Delta '+t \omega'+(a'-t+\tau)\alpha '$ is nef for $a'\gg 0$,
 then $Z'$ is contained in the support of $G'$ and hence in a component $S'$ of $\lfloor \Delta '\rfloor$ and this contradicts what we have shown above (see Claim \ref{c-S}).
\end{proof}

If $\tau >0$, then from Claim \ref{c-S} it follows that there is a component $S'$ of $\lfloor \Delta '\rfloor $ and a $(K_{S'}+\Delta _{S'})$-negative  extremal ray $R_i=\mbR^+\cdot[\Sigma_i]$ such that  $(K_{X'}+\Delta '+\tau \omega')\cdot R_i=\alpha'\cdot R_i=0$. Let $\pi : S'\to T$ denote the corresponding contraction. Since $\omega'\cdot R_i>0$ and  $\alpha '\cdot R_i=(\omega '+G ')\cdot R_i=0$, it follows that $G'\cdot \Sigma_i<0$. Thus $\Sigma_i$ is contained in a component of $G'$, say $S''$ such that $S''\cdot\Sigma_i<0$. Since $\Supp(G')=\lrd\Delta'\rrd$, $\Sigma_i$ is contained in a $(K_{S''}+\Delta_{S''})$-negative extremal face of $\NA(S'')$, where $K_{S''}+\Delta_{S''}:=(K_{X'}+\Delta')|_{S''}$. This face contains a negative extremal ray $R''_i=\mbR^+\cdot[\Sigma''_i]$ such that $S''\cdot R''_i<0$. Thus replacing $S'$ by $S''$ and $R_i$ by $R''_i$ we may assume that $S'\cdot R_i<0$, and hence $-S'|_{S'}$ is $\pi$-ample.
%and \hl{so we may assume that $S'\cdot R_i<0$}\footnote{Om: There may be a small problem here!}
 %Consider the induced map $\phi:\overline{\rm NA}(S')\to \overline{\rm NA}(X')$. Then $\phi (R_i)$ is a $K_{X'}+\Delta '$-negative extremal ray of $\phi(\overline{\rm NA}(S'))$ and so $F=\phi ^{-1}(\phi (R_i))$ is a $K_{S'}+\Delta _{S'}$-negative  extremal face of $\overline{\rm NA}(S')$,  and so there is a corresponding contraction $\pi : S'\to T$. Since $\omega '\cdot R>0$ and $\alpha '\cdot R=(\omega '+G ')\cdot R<0$, it follows that $G'\cdot R<0$ and so we may assume that $S'\cdot R<0$ and hence $-S'|_{S'}$ is $\pi$-ample. 
Now write $\Delta'=S'+\Delta''$ so that $S'\not\subset \Supp(\Delta'')$. Since $(X', \Delta'+\bomega')$ is gdlt, for $0<\eps\ll 1$ we have $(X', S'+(1-\eps)\Delta''+\bomega')$ is gplt and $(K_{X'}+S'+(1-\eps)\Delta''+\omega')\cdot R_i<0$. Thus by Theorem \ref{t-ext}, there exists 
a bimeromorphic morphism $p : X' \to Z'$ such that $p|_{S'} = \pi$ and $p|_{X'\setminus S'}$ is an isomorphism.
\begin{claim}\label{clm:contraction}
   The morphism $p$ is either a divisorial or flipping contraction, and in the latter case the corresponding flip exists. 
\end{claim}
\begin{proof}
    Since $-(K_{X'}+\Delta ')$ is $p$-ample, $Z'$ has rational singularities by \cite[Lemma 2.44]{DH20}. Let $\gamma \in H^{1,1}_{\rm BC}(X')$ such that $\gamma \cdot R_i=0$, then $\gamma \cdot C=0$ for any $p$-exceptional curve. This is because all $p$-exceptional curves are contained in $S'$ and $\rho (S'/T)=1$.
    By \cite[Lemma 3.3]{HP16}, there is a class $\gamma _{Z'}\in H^{1,1}_{\rm BC}(Z')$ such that $p^*\gamma _{Z'}=\gamma$. It follows that $\rho (X'/Z')=1$.
    This shows that 
 $p$ is either a divisorial or flipping contraction. By Theorem \ref{t-flip}, if it is a flipping contraction, then the corresponding flip exists. %\begin{claim}    $Z'$ is a K\"ahler variety.   \end{claim}   \begin{proof}  \end{proof} Note that as $-S'$ is $p$-ample, $p$ is a projective morphism and therefore 
 %To that end, let $V\subset Z'$ be a positive dimensional subvariety of $Z'$.  {\color{red}  Note that $\alpha'=p^*\alpha_{Z'}$. If $Z'\not\subset p(\Ex(p))$, let $V'$ be the strict transform of $V$. Then $(\alpha_{Z'})^{\dim V}\cdot V={\alpha'}^{\dim V'}\cdot V'>0$. If $V\subset p(\Ex(p))$, then $\dim V=1$ (this possible only when $p$ is a divisorial contraction). In this case $p^{-1}V\to V$ is projective morphism, so there is a curve $V'\subset p^{-1}V$ such that $f'(V')=V$. Then again $d((\alpha_{Z'})^{\dim V}\cdot V)={\alpha'}^{\dim V'}\cdot V'>0$, since $V'$ is not contracted by $p$; here $d$ is the generic degree of the finite morphism $V'\to V$. Thus from \cite[Lemma 6.3]{DH20} or more generally from \cite[Theorem 2.30]{DHP22} it follows that $\alpha_{Z'}$ is a K\"ahler class.  }\footnote{Om: These is not correct, $\Null(\alpha')$ is larger than $\Ex(p)$. We need argue that $R_i$ is a negative extremal ray of $\NA(X')$, so there is a nef class $\beta'$ on $X'$ whose null locus is $\Ex(p)$ so that $\beta'=p^*\alpha_{Z'}$. CH: It should be "easy" to fix since the morphism is projective and so the relative cone theorem holds.}
 \end{proof}

We may then replace $X'$ by the image of this flip or divisorial contraction. Observe that, at this stage $X'$ is in Fujiki's class $\mcC$ and not necessarily K\"ahler, however, by Lemma \ref{lem:fujiki-to-kahler-surface} the components of $\lrd\Delta'\rrd$ are still K\"ahler surfaces with $\mbQ$-factorial rational singularities. So we can repeat the above arguments.

\begin{claim}\label{c-m} Fix $0<\epsilon  \ll 1$. After finitely many steps, we obtain a bimeromorphic map $\phi: X'\dasharrow X^+$ such that for any $0<t\leq \epsilon$ \[K_{X^+}+\Delta ^++t\omega ^+ +a\alpha ^+=\phi _*(K_{X'}+\Delta '+t\omega '+a\alpha ')\] is nef for some $a>0$ (depending on $t$).
\end{claim} 
\begin{proof}
    Properties (1-5) stated at the beginning of our proof continue to be satisfied after each flip or divisorial contraction. So we may repeat the procedure obtaining an $\alpha '$-trivial, $(K_{X'}+\Delta ')$-minimal model program. Since 
    termination of flips holds by Theorem \ref{thm:termination}, after finitely many steps we may assume that $\tau=0$. Thus we obtain an $\alpha '$-trivial $(K_{X'}+\Delta ')$-MMP $X'\dasharrow X^+$ such that $K_{X^+}+\Delta ^++\epsilon \omega ^++a\alpha ^+$ is nef for some $\epsilon >0$, $a>0$ 
    and \[{\rm inf}\{ s\geq 0\;|\; K_{X^+}+\Delta ^++s\omega^++a\alpha^+\ {\rm is\ nef\ for\ some\ }a>0\}=0.\]
   The claim now follows easily by taking convex linear combinations. 
\end{proof}

Let $\psi:X\dasharrow X^+$ and $\phi:X'\dasharrow X^+$ be the induced bimeromorphic map,  $U:=X\setminus\Null(\alpha)$ and $X'_{U}:=\nu^{-1}(U)$. Next we will show that every step of this MMP is vertical over $U$.

% {\color{red} 
% Let $X'=X_0\bir X^1\bir\cdots \bir X^n=X^+$ be the $\alpha'$-trivial MMP constructed above and $f^i:X^i\to Z^i$ be the $i$-th extremal contraction of this MMP and $X^i\bir X^{i+1}$ is the corresponding divisorial contraction or flip (in case of divisorial contraction $X^{i+1}\cong Z^i$). Note that $f^i$ is defined over $X$, since every $f^i$-exceptional curve is $\nu^i$-exceptional; let $\nu^i:X^i\to X$ be induced morphism. Suppose that $f^i_U:=f^i|_{X^i_U}$ and $g^i_U:=g^i|_{X^{i+1}_U}$, where $g^i:X^{i+1}\to Z^i$ is the induced morphism.

% \begin{claim}
%     $f^i_U$ and $g^i_U$ are proper morphisms for all $0\<i\<n$.
% \end{claim}

% \begin{proof}
%     Let $h^i:Z^i\to X$ be the induced morphism. Then he claim follows from the commutativity of morphisms: $\nu^i=h^i\circ f^i$ and $\nu^{i+1}=h^i\circ g^i$.
% \end{proof}

\begin{claim}\label{c-p}
The map $\phi _U=:\phi |_{X'_U}$ is a proper bimeromorphic map defined over $U$, i.e. there exist proper bimeromorphic morphisms $X'_U\to U$ and $X^+_U\to U$ commuting with $\phi_U$.
\end{claim} 
\begin{proof}
  
Suppose that $X'=:X^0\dasharrow X^1\dasharrow \ldots \dasharrow X^n=:X^+$ is the MMP obtained above and $\phi ^i:X'\dasharrow X^i$ are the induced bimeromorphic maps. Proceeding by induction, it suffices to show that if
$\phi ^i_U:=\phi ^i|_{X'_U}:X'_U\dasharrow X^i_U$ is a proper bimeromorphic map defined over $U$, then so is $\phi ^{i+1}_U$. We will denote by $\nu ^i_U:X^i_U\to U$ the corresponding proper morphism.
Since $X^i\dasharrow X^{i+1}$ is an $\alpha ^i$-trivial flip or divisorial contraction,  if $f^i: X^i\to Z^i$ is the corresponding contraction, and $C$ is a contracted curve, then $\alpha ^i\cdot C=0$. Suppose that $C_U:=C\cap X^i_U\ne \emptyset$. 
Let $p:W\to X'$, $q:W\to X^i$ be the normalization of the graph of $\phi ^i$.
If $C_U$ is not $\nu ^i_U$ vertical, then let $C'\subset W$ be a curve dominating $C$ so that $q_*C'=dC$ for some $d>0$. 
Then
$\nu _* p_*C'\ne 0$ and so $\nu _* p_*C'\cdot \alpha \ne 0$ which is a contradiction since then
\[0=C\cdot \alpha ^i=\frac 1dC'\cdot q^* \alpha ^i=p_*(\frac 1dC')\cdot  \alpha '=\nu _* p_*(\frac 1dC')\cdot \alpha \ne 0 .\]  Thus every such $C_U$  is $\nu ^i_U$ vertical.  It follows that $f^i_U:=f^i|_{X^i_U}$ is a (proper bimeromorphic) morphism over $U$ and hence $X^{i+1}_U\to U$ is a proper bimeromorphic map over $U$. 
  
\end{proof}

   \begin{claim}\label{c-s}
       The divisors contracted by $\phi: X'\dasharrow X^+$ coincide with the set of $\nu: X'\to X$ exceptional divisors and hence $\psi:X\dasharrow X^+$ is a small bimeromorphic map of  strongly $\mbQ$-factorial varieties.
   \end{claim}
   \begin{proof} 
   We will first show that ${\rm Supp} N(K_{X'}+\Delta '+a\alpha ')={\rm Ex}(\nu)$ for any $a\gg 0$. Recall that for any pseudo-effective $(1,1)$ class $\gamma$, $N(\gamma)$ is defined as the negative part of the Boucksom-Zariski decomposition of $\gamma$, see Definition \ref{def:mod-nef}.\\
   Now since $\alpha$ is big, $K_X+B+a\alpha$ is big for $a\gg 0$. Moreover,  since $K_{X'}+\Delta '+a\alpha ' =\nu ^*(K_X+B+a\alpha)+E$, where ${\rm Supp}(E)={\rm Ex}(\nu)$, from \cite[Lemma A.5]{DHY23} it follows that ${\rm Supp} N(K_{X'}+\Delta '+a\alpha ' )\supset {\rm Ex}(\nu)$.
Since $\alpha '=G'+\omega '$ where $\omega '$ is K\"ahler and $\Supp(G')={\rm Ex}(\nu)$,  $\omega ^*:=\omega '+\frac 1 a (K_{X'}+\Delta ')$ is also K\"ahler for $a\gg 0$ and so $K_{X'}+\Delta '+a\alpha ' \equiv a\omega ^*+aG'$. Thus ${\rm Supp} N(K_{X'}+\Delta '+a\alpha ')\subset {\rm Supp} (N(G'))\subset {\rm Ex}(\nu)$ and hence ${\rm Supp} N(K_{X'}+\Delta '+a\alpha ')={\rm Ex}(\nu)$ for any $a\gg 0$.
Since $\omega'$ is a modified K\"ahler class, ${\rm Supp} N(K_{X'}+\Delta '+a\alpha '+\epsilon \omega ')={\rm Ex}(\nu)$ for all $0<\epsilon \ll 1$ (e.g. see the proof of \cite[Claim 3.18]{DHY23}). 
     Since $\phi: X'\dasharrow X^+$ is  a $(K_{X'}+\Delta '+\epsilon \omega '+a\alpha ')$-MMP, then $K_{X^+}+\Delta ^++\epsilon \omega ^+ +a\alpha ^+$ is nef, and by \cite[Theorem A.11]{DHY23}, $\phi$ contracts ${\rm Ex}(\nu)$.
   \end{proof}
By \cite[Lemma 2.32]{DH20}, $\psi_*:H^{1,1}_{\BC}(X)\to H^{1,1}_{\BC}(X^+)$ is an isomorphism. Let $R^\perp\subset N^1(X)$ be the codimension 1 subspace of classes $\gamma \in N^1(X)$ such that $\gamma \cdot R=0$. Let  $W$ be the normalization of the graph of $\psi:X\bir X^+$ and $p:W\to X$ and $q:W\to X^+$ the induced morphisms. Since $\psi$  is not a morphism, then there is a curve $\tilde C\subset W$ such that $p_*\tilde C=0$ and $C^+=q_* \tilde C\ne 0$. It follows that $\alpha ^+\cdot C^+=0$. Let $R^+\subset N_1(X^+)$ be the ray spanned by $C^+$.
If $\gamma \in R^\perp$, then $\nu ^* \gamma \cdot \Sigma \equiv 0$ for any curve $\Sigma$ such that $\alpha ' \cdot \Sigma =0$ where $\alpha '=\nu ^*\alpha $.
Since the minimal model program $X'\dasharrow X^+$ is $\alpha'$-trivial, i.e. it only contracts $\alpha'$-trivial curves, it follows that it is also $\gamma ':=\nu ^* \gamma $-trivial and hence that $\gamma ^+\cdot C^+=0$ where $\gamma ^+=\phi _* \gamma '\in H^{1,1}_{\rm BC}(X^+)$.
Therefore, $\psi _*(R^\perp)=(R^+)^\perp$, and hence every $p$-exceptional curve $\tilde C'$ satisfies $q_*\tilde C'\in R^+$.
Since $\psi_*$ is an isomorphism and $(K_X+B)\cdot R\ne 0$, then $(K_{X^+}+B^+)\cdot R^+\ne 0$. Suppose that $(K_{X^+}+B^+)\cdot R^+< 0$, then 
$(K_{X^+}+B^++t\omega^+)\cdot R^+< 0$ for $0<t\ll 1$. Since 
$\alpha ^+\cdot R^+=0$, $(K_{X^+}+B^++t\omega^++a\alpha ^+)\cdot R^+< 0$ for any $a>0$, contradicting Claim \ref{c-m}. Therefore $(K_{X^+}+B^+)\cdot R^+>0$.

Let $\eta :=\alpha +\delta (K_{X}+B)$ for some $0<\delta \ll 1$ and  $\eta ^+:=\psi _* \eta\in H^{1,1}_{\BC}(X^+)$.
Consider $E:=p^*\eta -q^*\eta ^+$, then we claim that $-E$ is $p$-ample. Indeed, if  $\tilde C\subset W$ is a $p$-vertical curve, then $q_*\tilde C\ne 0$, and as we have seen above that $[q_*\tilde C]\in R^+$, so $\eta ^+\cdot q_*\tilde C>0$ and hence $-E\cdot \tilde C>0$. It follows by the negativity lemma that $E\geq 0$ and the support of $E$ contains $\Ex(p)$. But then ${\rm Supp}(E)=\Ex(p)$. Clearly $\Supp(E)\subset \Ex(q)$, as $\psi$ is an isomorphism in codimension $1$. 
We claim that $\Ex(q)=\Supp(E)=\Ex(p)$. Indeed, let $\tilde C\subset W$ be a $q$-vertical curve. Then $p_*C\neq 0$ and in fact $p_*C\subset \Null(\alpha)$ as $\psi:X\bir X^+$ is an isomorphism on $X\setminus\Null(\alpha)$. Thus $E\cdot \tilde C=\eta\cdot p_*\tilde C=(\alpha+\delta(K_X+B))\cdot p_*\tilde C<0$, and hence $\Ex(q)\subset \Supp(E)$. 

We will now show that $-E|_{{\rm Supp}(E)}$ is ample. Let $E_i$ be a component of $\Ex(p)=\Ex(q)$. We have $C_i:=p(E_i)$ and $C_i^+:=q(E_i)$ are curves and $\theta:=(p|_{E_i}, q|_{E_i}):E_i\to C_i\times C_i^+$ is finite. Let $\tilde C_i$ (resp. $\tilde C_i^+$) be a curve on $E_i$ dominating $C_i\times \{x\}$ (resp. $\{x\}\times C_i^+$) for general $x\in C_i^+$ (resp. $x\in C_i$), then $q_* \tilde C_i=0$ (resp. $p_*\tilde C_i^+=0$). Thus  $[C_i^+]\in R^+$ and $C_i\cdot \alpha =0$ so that  $[C_i]\in R$. It follows that $-\eta|_{C_i^+}$ (resp. $\eta|_{C_i}) $ is K\"ahler. Since $\theta $ is finite, then  $-E|_{E_i}\equiv (q^*\eta ^+-p^*\eta)|_{E_i}=\theta^*(-\eta|_{C_i},\eta^+|_{C^+_i})$ is ample and so 
$-E|_{{\rm Supp}(E)}$ is ample. Perturbing the coefficients of $E$ slightly we may assume that $E$ is a $\mbQ$-Cartier divisor. But then we can contract $E$ to a point by Lemma \ref{l-cont}. 

Let $g:W\to Z$ be the induced bimeromorphic morphism, then we claim that we have induced morphisms $f:X\to Z$ and $f^+:X^+\to Z$. To this end, if $C\subset W$ is a $q$ (respectively $p$) exceptional curve, then $C$ is contained in ${\rm Supp}(E)=\Ex(p)=\Ex(q)$. %and so $\eta \cdot p_*C<0$ (respectively,  $\eta ^+\cdot q_* C>0$).Since at least one of $p_*C,q_*C$ is non-zero, then \[ E\cdot C=(p^*\eta -q^*\eta ^+)\cdot  C <0\] and so $C$ is contained in the support of $E$. 
Since $g$ contracts $E$ to a point, then
$g_*C=0$ and so by the rigidity lemma $X\to Z$ and $X^+\to Z$ are morphisms.

Note that by construction $f:X\to Z$ contracts a non-empty set of $\alpha$-trivial curves (i.e. curves in $R$). We claim that in fact $f$ contracts every curve in $R$. Let $C\subset X$ be a curve in $R$ so that $\alpha \cdot C=0$. If $C$ is not contracted by $f$, then $p$ is an isomorphism over the general points of $C$ and we let $\tilde C:=p^{-1}_*C$ and $C^+:=q_* \tilde C$. Then $\alpha ^+\cdot C^+=0$.
Since $K_{X^+}+B^++t\omega ^++a\alpha ^+$ is nef for any $0<t\ll 1$ and some $a\gg 0$, then
\[(K_{X^+}+B^++t\omega ^+)\cdot C^+=(K_{X^+}+B^++t\omega ^++a\alpha ^+)\cdot C^+\geq 0\]
and taking the limit we have $(K_{X^+}+B^+)\cdot C^+\geq 0$.
It follows that
\[0\leq E\cdot \tilde C=(p^*\eta -q^*\eta ^+)\cdot \tilde C=(K_X+B)\cdot C-(K_{X^+}+B^+)\cdot C^+<0.\]
This is impossible and so $C$ is contracted by $f$.
 It follows that  $f$ is $(K_X+B)$-negative. By \cite[Lemma 2.44]{DH20}, $Z$ has rational singularities and by \cite[Lemma 3.3]{HP16} $f^*H^{1,1}_{\rm BC}(Z)=R^\perp\subset H^{1,1}_{\rm BC}(X)$.
Thus $\rho (X/Z)=1$.
Finally, since $f$ contracts the set of $\alpha$-trivial curves, we have $\alpha =f^*\alpha _Z$. 

We will now check that $\alpha _Z$ is a K\"ahler class. 
To this end, let $V\subset Z$ be a positive dimensional subvariety of $Z$.    Note that $ \dim f(\Ex(f))=0$. Let $V'$ be the strict transform of $V$. Then $(\alpha_{Z})^{\dim V}\cdot V={\alpha}^{\dim V}\cdot V'>0$ and $V'$ is not contained in ${\rm Null}(\alpha)$.  Thus from \cite[Theorem 2.29]{DHP22} it follows that $\alpha_{Z}$ is a K\"ahler class. 
Since $\psi _*$ is an isomorphism, then $\rho (X^+/Z)=1$.
We have already seen that $f^+$ contracts a $(K_{X^+}+B^+)$-positive curve and so $K_{X^+}+B^+$ is $f^+$-ample.
Finally, it is  easy to see that $(f^+)^*\alpha _Z+\delta (K_{X^+}+\Delta ^+)$ is K\"ahler for $0<\delta\ll 1$, and hence $X^+$ is a K\"ahler $3$-fold.

\end{proof}~\\

Next we prove the existence of divisorial contractions in dimension $3$ for strongly $\mbQ$-factorial pairs.
\begin{theorem}\label{t-divcont} 
Let $(X,B)$ be a strongly $\mbQ$-factorial compact K\"ahler $3$-fold klt pair such that 
\begin{enumerate}
   \item $K_X+B$ is pseudo-effective,
    \item $\alpha =[K_X+B+\beta]$ is nef and big for some K\"ahler form $\beta$, and
\item $\alpha ^\perp \cap \overline{\rm NA}(X)=R$ is an extremal ray of divisorial type.
\end{enumerate} 
Then the divisorial contraction $f:X\to Z$ exists and there is a K\"ahler class $\alpha _Z$ on $Z$ such that $\alpha = f^*\alpha _Z$.
\end{theorem}
\begin{proof} Recall that by definition $\alpha ^\perp \cap \overline{\rm NA}(X)=R$ is an extremal ray of divisorial type if and only if $\dim {\rm Null}(\alpha)=2$. 
We begin by observing the following.
\begin{claim}\label{clm:2-dim-null-locus}
    There is an irreducible divisor $S$ such that ${\rm Null}(\alpha)=S$, $S$ is a Moishezon surface covered by family of $\alpha$-trivial curves and $S\cdot R<0$.
\end{claim}
\begin{proof}
Let $\omega$ be a K\"ahler class on $X$. Since $\alpha$ is big, $\alpha-\eps\omega$ is big for $0<\epsilon \ll 1$, and we have the Boucksom-Zariski decomposition $\alpha -\epsilon \omega =\sum s_jS_j+P$ where $P$ is modified nef and big and in particular the restriction of $P$ to any surface is pseudo-effective. Let $S\subset\Null(\alpha)$ be a divisor (a surface) and $S'\to S$ the minimal resolution. Then $\alpha |_{S'}$ is nef but not big (as $(\alpha |_{S'})^2=0$), therefore $S$ coincides with some component of $\sum s_jS_j$, which for simplicity we denote by $S_1$.
    We claim that $S$ is Moishezon. To this end, consider $b={\rm mult}_{S}(B)$ and \[\left(1+\frac {1-b}{s_1}\right)\alpha \vert_{S'}=\left(K_X+B+\beta +\left(\frac {1-b}{s_1}\right)\left(\sum s_jS_j+P+\epsilon \omega\right)\right)\Big\vert_{S'}.\]
     Since ${\rm mult}_{S}\left(B+\frac {1-b}{s_1}\sum s_jS_j\right)=1$, we have $\left(K_X+B +\frac {1-b}{s_1}\sum s_jS_j\right)\big\vert_{S'}=K_{S'}+B_{S'}$ where $B_{S'}\geq 0$.
       %Suppose first that $\alpha |_{S'}\equiv 0$. 
       Then 
    \begin{equation}\label{eqn:restriction-to-resolution}
         \left(1+\frac {1-b}{s_1}\right)\alpha |_{S'}=K_{S'}+B_{S'}+\left(\beta +\frac {1-b}{s_1}(P+\epsilon \omega)\right)\Big\vert_{S'}.
    \end{equation} 
    Since $B_{S'}\geq 0$, $P|_{S'}$ is pseudo-effective, $(\beta +\frac {1-b}{s_1}(P+\epsilon \omega))|_{S'}$ is big and $\alpha |_{S'}$ is not big, it follows that $K_{S'}$ is not pseudo-effective, and hence $S'$ is projective by classification and $H^2(S', \mcO_{S'})=0$ (e.g. see \cite[Lemma 2.43]{DH20}). In particular, $S$ is Moishezon. 
    
    We claim that $S$ is covered by an analytic family of $\alpha$-trivial curves $\{C_t\}$.
    Fix $A_{S'}$ an ample divisor on $S'$. Since $H^2(S', \mcO_{S'})=0$, $\alpha |_{S'}$ is represented by an $\mbR$-divisor. 
    Since $\alpha |_{S'}$ is nef, it is in the cone $\overline {NM}_1(S')$ of movable curves. By \cite[Theorem 1.3]{Ara10} and \cite[Theorem 1.9]{Das20}, for any $\epsilon >0$, we have
a decomposition \[\alpha |_{S'}\equiv C_{\epsilon}+\sum _{i=1}^k\lambda _iM_i,\]
where $C_{\epsilon}\in (\overline{NE}_1(S')_{(K_{S'})\geq 0}+\overline {NM}_1(S'))_{(K_{S'}+\epsilon A_{S'})\geq 0}$, $\lambda _i>0$, $(K_{S'}+\epsilon A_{S'})\cdot C_{\epsilon}\geq 0$ and the $M_i$ are movable curves. If $\alpha |_{S'}\cdot M_i=0$, then the pushforward of $M_i$ onto $S$ defines a movable curve on $S$ which is $\alpha$-trivial and the claim follows. Therefore, we may assume that $\alpha |_{S'}\cdot M_i>0$ for all $i$.
Now, $0=(\alpha |_{S'})^2=\alpha |_{S'}\cdot (C_{\epsilon}+\sum _{i=1}^k\lambda _iM_i)$ and since $\alpha |_{S'}$ is nef then $k=0$, i.e. $\alpha |_{S'}\equiv C_{\epsilon}$. But then $\alpha |_{S'}\cdot (K_{S'}+\epsilon A_{S'})\geq 0$. Taking the limit as $\epsilon \to 0$, it follows that
$\alpha |_{S'}\cdot K_{S'}\geq 0$. Since $\alpha |_{S'}$ is nef, then $\alpha |_{S'}\cdot \gamma\geq 0$ for any pseudo-effective class $\gamma$. Thus $\alpha |_{S'}\cdot B_{S'}\geq 0$, $\alpha |_{S'}\cdot P|_{S'}\geq 0$, and $\alpha |_{S'}\cdot \omega |_{S'}\geq 0$. It follows from \eqref{eqn:restriction-to-resolution} and $(\alpha'|_{S'})^2=0$ that all these terms are equal to $0$. But then $\alpha |_{S^\nu}\cdot \omega |_{S^\nu}=0$ where $S^\nu\to S$ is the normalization. Since $\omega |_{S^\nu}$ is K\"ahler, then $\alpha |_{S^\nu}\equiv 0$. This contradicts the inequality above $\alpha |_{S'}\cdot M_i>0$, and so the claim holds.
    %and covered by an analytic family of $\alpha$-trivial curves $\{C_t\}$.
    %we consider the nef reduction map, i.e. a resolution $g:S'\to S$ and a morphism $h:S'\to T$ where $\dim T=0,1$, $\alpha_{S'}:=\alpha|_{S'}$ is trivial on the fibers of $h$ and non-trivial on non-vertical curves through a general point $s\in S'$. Note that $\dim T=0$ iff $\alpha_{S'}\equiv 0$.  

Let $\{C_t\}_{t\in T}$ be an analytic family of $\alpha$-trivial curves on $S$ covering $S$ as proved in the claim above. Then $[C_t]\in R$ for all $t\in T$ and we have
\[0>-\epsilon \omega\cdot C_t =(\alpha -\epsilon \omega)\cdot C_t =\left(\sum s_jS_j+P\right)\cdot C_t\geq s_{1}S\cdot C_t\]
as $S_i\cdot C_t\geq 0$ for $i\neq 1 $ and $S=S_1$. Thus $S\cdot C_t<0$, and hence $S\cdot R<0$ as $R=\mbR^+\cdot[C_t]$. In particular, if $C$ is any curve with $[C]\in R$, then $S\cdot C<0$ and hence $S={\rm Null}(\alpha)$.
\end{proof}
 Let $b:={\rm mult}_S(B)$ and $\Delta:=B+(1-b)S$. 
 \begin{claim} There exists a dlt model $\mu :X'\to X$ of $(X,\Delta)$. In particular 
\begin{enumerate}
    \item $(X',\Delta ')$ is $\Q$-factorial and dlt, where $\Delta':=\mu ^{-1}_*\Delta +{\rm Ex}(\mu)$, 
    \item $K_{X'}+\Delta '$ is nef over $X$ so that $\mu ^*(K_X+\Delta )-(K_{X'}+\Delta ')\geq 0$, and
    \item $K_{X'}+\Delta '=\mu ^*(K_X+B)+(1-b)S'+\sum a_jE_j$, where $S'=\mu ^{-1}_*S$ and $a_j>0$ for all $j$. 
    \end{enumerate}
    \end{claim}
    \begin{proof}
        Let $\mu :X'\to X$ be a projective log resolution and $\Delta':=\mu ^{-1}_*\Delta +{\rm Ex}(\mu)$, then $(X',\Delta ')$ is dlt. We may run the $(K_{X'}+\Delta ')$-MMP over $X$ (the existence of this MMP is well known since $\dim X=3$ and $\mu$ is projective, but see also \cite[Theorem 1.4]{DHP22} for the case $\dim X>3$). By \cite[Theorem 3.3]{DO23}, we may assume that the corresponding MMP terminates and replacing $(X',\Delta ')$ by the output of this MMP, we may assume that $K_{X'}+\Delta '$ is nef over $X$.
        By the negativity lemma, $\mu ^*(K_X+\Delta )-(K_{X'}+\Delta ')\geq 0$ and so (1) and (2) hold.
        (3) follows since $(X,B)$ is klt.
        
    \end{proof}
%(see eg. \cite[Theorem 1.27]{Fuj22}).  \[K_{X'}+\Delta '=\mu ^*(K_X+B)+(1-b)S'+\sum c_jE_j\]  where $S'=\mu ^{-1}_*S$ and $c_j>0$ (since $(X,B)$ is klt. Let $\eta'$ be a K\"ahler class on $X'$ and note that by \cite[Lemma 2.27]{DH20}, there exists a $\mu$-exceptional $\mbR$-divisor $F$ such that $\eta'+F\equiv _X0$. Thus $-F\equiv_X \eta '$ is ample over $X$ and by the negativity lemma $F\geq 0$. In particular, ${\rm Ex}(\mu)={\rm Supp}(F)$. Perturbing the coefficients of $F$ we may assume that $F$ is a $\mbQ$-divisor. It follows that the non-klt locus of $(X',\Delta ')$ is equal to $S'\cup {\rm Ex}(\mu)$. By (2), the image of ${\rm nklt}(X',\Delta ')$ is contained in ${\rm nklt}(X,\Delta )=S$. Thus $X'_U\to U$  is an isomorphism where $U:=X\setminus S=X\setminus\Null(\alpha)$.

We will now run the $\alpha'$-trivial $(K_{X'}+\Delta')$-minimal model program, where $\alpha '=\mu ^*\alpha$. 
Note that if $U:=X\setminus S=X\setminus\Null(\alpha)$, then $(U,\Delta |_U)$ is klt and so $X'_U\to U$  is a small bimeromorphic $(K_{X'}+\Delta ')$-trivial morphism. Since the $\mu$-exceptional locus is divisorial, then $X'_U\to U$  is an isomorphism.
If $C$ is a curve not contained in $W':=X'\setminus X'_U$, then $\alpha'\cdot C>0$. It follows easily that every $\alpha' $-trivial step of the $(K_{X'}+\Delta ')$-MMP will involve only curves contained in the complement of $X'_U$, and hence the restriction of this MMP to $X'_U$ is an isomorphism. 

Let $n(\alpha )$ be the nef dimension of the restriction of $\alpha$ to the normalization $S^\nu\to S$, see \cite[Definition 2.7]{BCE02}. Since $S$ is a Moishezon surface covered by a family of $\alpha$-trivial curves (as shown above), it follows that $0\leq n(\alpha )\leq 1$. If $n(\alpha )=0$, then $\alpha |_{S^\nu}\equiv 0$ and if $n(\alpha )=1$, then the nef reduction map $S^\nu \to T$ is a morphism to a smooth (projective) curve (see \cite[2.4.4]{BCE02}). Observe that if $n(\alpha )=0$, then the nef reduction map $S^\nu \to T$ is a morphism to a point and in this case the existence of the divisorial contraction $f:X\to Z $ follows from Corollary \ref{c-1}. We however give an alternate argument below which proves both of the cases: $n(\alpha)=0$ and $n(\alpha)=1$,  simultaneously. 

%{\bf Assume that $n(\alpha )=0$}, that is assume that $\alpha |_{S^\nu}\equiv 0$, then $\alpha '|_{\mu ^*S}\equiv 0$ i.e. $\alpha '\cdot C=0$ for any curve contained in $W'=S'\cup {\rm Ex}(\mu)=X'\setminus X'_U$. 
 If $\gamma\in H^{1,1}_{\BC}(X)$, then we will say that $\gamma\equiv_\alpha 0$ if and only if $\gamma\cdot C=0$ for every $\alpha$-trivial curve $C\subset X$, i.e. $\gamma\cdot C=0$ whenever $\alpha \cdot C=0$ for a curve $C\subset X$.
We  have that $(K_X+B-sS)\cdot R=0$ for some $s>0$ (as both $K_X+B$ and $S$ are $R$-negative), and hence $K_X+B-sS\equiv_\alpha 0$.
It follows that 
$\mu ^* (K_X+B-sS)\equiv _{\alpha '}0$ and hence that \[K_{X'}+\Delta '\equiv_{\alpha '} \Theta':=(1-b)S'+\sum a_jE_j+s\mu ^*S\geq 0,\] where %$\mathcal E'=(1-b)S'+\sum c_jE_j+s\mu ^*S\geq 0$ and 
${\rm Supp}(\Theta')=\lfloor \Delta '\rfloor$. 

\begin{claim}
    We can run the $\alpha '$-trivial $(K_{X'}+\Delta ')$-MMP 
    \[X'=X^0\dasharrow X^1 \dasharrow \ldots \dasharrow X^m\]
    so that if $\phi^i:X'\dasharrow X^i$, then
\begin{enumerate}
\item each flip and divisorial contraction is $\alpha'$-trivial, hence $\alpha ^i=\phi ^i_*\alpha'$ is nef,
\item  $\phi ^i|_{X'_U}$ is an isomorphism over $U$ and we denote by $X^i_U$ its image,
\item $(X^i,\Delta ^i:=\phi ^i_*\Delta ')$ is dlt,  $X^i$ is strongly $\mbQ$-factorial, and $\lfloor \Delta ^i\rfloor \subset W^i:=X^i\setminus X^i_U$, 
\item $K_{X^i}+\Delta ^i-\Theta^i\equiv_{\alpha ^i} 0$ and ${\rm Supp}(\Theta^i)=\lfloor \Delta ^i\rfloor$, and
    \item if $S^m$ is a component of $\lfloor \Delta ^m\rfloor$ then every $K_{S^m}+\Delta _{S^m}:=(K_{X^m}+\Delta ^m)|_{S^m}$ negative extremal ray $R^m$ is $\alpha _{S^m}:=\alpha _{X^m}|_{S^m}$-positive, i.e. $\alpha _{S^m}\cdot R^m>0$.
\end{enumerate}
    
\end{claim}
\begin{proof}
We will proceed by induction on $i$. Assume that we have already constructed $X'\dasharrow X^1 \dasharrow \ldots \dasharrow X^i$ with the required properties.
Suppose that there is a component $S^i$ of $\lfloor \Delta ^i\rfloor$ and a $K_{S^i}+\Delta _{S^i}:=(K_{X^i}+\Delta ^i)|_{S^i}$-negative extremal ray $R^i=\R^+[\Sigma ^i]$ which is $\alpha _{S^i}:=\alpha _{X^i}|_{S^i}$ non-positive and hence trivial, i.e. $\alpha _{S^i}\cdot R_i=0$.
Since $K_{X^i}+\Delta ^i\equiv_{\alpha ^i} \Theta ^i$ and ${\rm Supp}(\Theta^i)=\lfloor \Delta ^i\rfloor$, then $\Theta^i \cdot \Sigma ^i<0$ and hence, there is a component $\bar S^i$ of $\lfloor \Delta ^i\rfloor$ such that $\bar S^i\cdot \Sigma ^i<0$ 
and so $\Sigma ^i\subset \bar S^i$. Clearly  $\Sigma ^i$ is also $K_{\bar S^i}+\Delta _{\bar S^i}:=(K_{X^i}+\Delta ^i)|_{\bar S^i}$-negative and hence is contained in an $\alpha _{\bar S^i}$-trivial $(K_{\bar S^i}+\Delta _{\bar S^i})$-negative extremal face. Let $\bar \Sigma ^i$ be a curve spanning an extremal ray in this face, we may assume that $\bar S^i\cdot \bar \Sigma ^i<0$. Let $\pi :\bar S^i\to V$ be the corresponding extremal contraction,
then $-\bar S^i $ and $-(K_{\bar S^i}+\Delta _{\bar S^i})$ are relatively ample over $V$ and so by Theorem \ref{t-ext} there is a contraction $p:X^i\to Z^i$ such that $p|_{\bar S^i}=\pi$ and $p|_{X^i\setminus \bar S^i}$ an isomorphism. If $p$ is a flipping contraction, then the flip exists by Theorem \ref{t-flip}.
We let $X^i\dasharrow X^{i+1}$ be the induced bimeromorphism (be it a flip or divisorial contraction).
By construction $p$ is $\alpha ^i$-trivial and $p|_{X^i_U}$ is the identity. In particular, since $Z^i$ has rational singularities (this follows from \cite[Lemma 2.44]{DH20}), $\alpha ^i\equiv p^*\alpha _{Z^i}$ where $\alpha _{Z^i}$ is nef.  (1-3)$_{i+1}$ are easily seen to hold.
Since $K_{X^i}+\Delta ^i-\Theta^i\equiv_{\alpha ^i} 0$ and $p$ is $\alpha ^i$-trivial, it also follows that \[K_{Z^i}+\Delta _{Z^i}-\Theta_{Z^i}:=p_*(K_{X^i}+\Delta ^i-\Theta^i)\equiv_{\alpha _{Z^i}} 0\] and hence, pulling back to $X^{i+1}$ we have $K_{X^{i+1}}+\Delta ^{i+1}-\Theta^{i+1}\equiv_{\alpha ^{i+1}} 0$ and (4)$_{i+1}$  holds.
By termination of flips Theorem \ref{thm:termination}, after finitely many steps, we obtain $X\dasharrow X^m$ such that 
(5) holds.
\end{proof}
Now recall the nef reduction morphism $S^\nu\to T$.  Note that if $P$ is a component of $\mu ^* S$, then there is an induced morphism $\sigma _P:P\to T$ such that for any curve $C$ on $P$ we have $\alpha '\cdot C=0$ if and only if $C$ is vertical over $T$ i.e. $\sigma _{P,*}C=0$. Note that if $n(\alpha|_{S^\nu})=0$, then $T$ is a point and every curve is vertical over $T$. Since $\phi ^i:X'\dasharrow X^i$ is $\alpha '$-trivial, it follows easily that if $\phi ^i_*P=P^i\ne 0$, then the induced bimeromorphic map $P\dasharrow P^i$ is defined over $T$ and in particular $P^i\dasharrow T$ is a morphism.
\begin{claim}
    If $n(\alpha|_{S^\nu} )=0$, then the map $\phi^m:X'\dasharrow X^m$ contracts $\mu ^*S$ and if $n(\alpha|_{S^\nu} )=1$, then the map 
    $\phi^m:X'\dasharrow X^m$ contracts $S'$ and every component $E_j$ of $\mu ^*S$ such that $n(\alpha '|_{E_j})=1$. Thus  if $n(\alpha |_{S^\nu})=0$, then $\phi :X\dasharrow X^m$ is a bimeromorphic map that contracts $S$ (and no other divisors) and if $n(\alpha|_{S^\nu})=1$, then $\phi :X\dasharrow X^m$ contracts only $S$ and may extract some divisors of $X'$ such that $n(\alpha '|_{E_j})=0$.
\end{claim}
\begin{proof}
    Recall that $F'\geq 0$ is a $\mu$-exceptional $\Q$-divisor such that $-F'$ is $\mu$-ample. 
    Let $\Xi':=\mu ^*S+\epsilon F'$ for $0<\epsilon \ll 1$.
    We claim that for $t\gg 0$ we have that $(-\Xi'+t\alpha ')|_{S'}$ and $(-\Xi'+t\alpha ')|_{E_j}$ are K\"ahler for every component $E_j$ of ${\rm Ex}(\mu)$.
    To see that $(-\Xi'+t\alpha ')|_{S'}$ is K\"ahler, note that $-S|_{S^\nu}\equiv_{\alpha|_{S^\nu}}  \frac {-1}s(K_X+B)|_{S^\nu}\num_{\alpha|_{S^\nu}}\frac{1}{s}\beta|_{S^\nu}$ is ample over $T$. Now if $\dim T=0$, then $-S|_{S^\nu}$ ample and $\alpha|_{S^\nu}\num 0$, thus $(-S+t\alpha)|_{S^\nu}$ is K\"ahler for any $t>0$. If $\dim T=1$, then $\alpha|_{S^\nu}=\vphi^*\gamma$ for some K\"ahler class $\gamma$ on $T$ (see \cite[Proposition 2.11]{BCE02}), where $\vphi:S^\nu\to T$ is the nef reduction map. Thus $(-S+t\alpha)|_{S^\nu}$ is K\"ahler for $t\gg 0$. Since $-F'|_{S'}$ is $\mu|_{S'}$-ample and hence ample over $S^\nu$, we have $(-\mu ^*S+t\alpha '-\epsilon F')|_{S'}$ is K\"ahler for $0<\eps\ll 1$.
    To see that $(-\Xi'+t\alpha ')|_{E_j}$ is K\"ahler, we consider 2 cases. If $\mu (E_j)$ is a point, then $(-\mu ^*S+t\alpha ')|_{E_j}\equiv 0$ and $ (-\epsilon F')|_{E_j}$ is K\"ahler and the claim follows. Therefore, we may assume that the normalization $V_j\to \mu (E_j)$ is a smooth curve. If $\alpha |_{V_j}\equiv 0$, then $\mu (E_j)$ is a curve in $R$ and hence $-S|_{V_j}$ is ample and so $(-\mu ^*S-\epsilon F')|_{E_j}$ is ample for $0<\eps\ll 1$, and $\alpha '|_{E_j}\equiv 0$. The claim follows.
    Finally, if $\alpha |_{V_j}\ne  0$, then $\alpha |_{V_j}$ is K\"ahler so that $(-S+t\alpha)|_{V_j}$ is K\"ahler for $t\gg 0$ and hence $(-\mu^*S+t\alpha'-\epsilon F)|_{E_j}$ is K\"ahler  for $0<\eps\ll 1$ and the claim follows.

    We claim that $\phi^m$ is $\Xi'$ non-positive, i.e. if $p:W\to X'$ and $q:W\to X^m$ is the normalization of the graph of $\phi^m$, then we claim that $p^*\Xi'-q^*(\phi ^m_*\Xi')\geq 0$. Recall that $p$ and $q$ are isomorphisms over $X_{U}'=\mu^{-1}U=\mu^{-1}(X\setminus S)$, since so is $\phi^m$. We now check that $-p^* \Xi'$ is nef over $X^m$. To this end consider a curve $C$ on $W$ such that $q_*C=0$, then $p_*C\ne 0$ is contained in $\Supp(\mu^*S)$ and $\alpha '\cdot p_*C=\alpha ^m\cdot q_*C=0$.
    But then  $-p^* \Xi'\cdot C=-\Xi'\cdot p_*C>0$, and in particular, $-p^*\Xi'$ is nef over $X^m$.
    %As $-\mathcal F'$ is ample on every component of $\mu ^{-1}(S)=X'\setminus X'_U$, it follows that $-p^* \mathcal F'$ is nef over $X^m$. 
    Let $\Xi^m:=q_* p^*\Xi'=\phi^m_*\Xi'$, then $q^*\Xi^m-p^*\Xi'$ is $q$-exceptional and $q$-nef so that by the negativity lemma, $p^*\Xi'-q^*\Xi^m\geq 0$; this proves our claim.\\
    
    Suppose now that $n(\alpha|_{S^\nu})=0$ and $\phi^m _*(\mu ^{*}S)\ne 0$ (resp. $n(\alpha|_{S^\nu})=1$ and there is a component 
    $E$ of $S'+\sum E_j$ such that $n(\alpha '|_{E})=1$ and $E^m:=\phi^m_*E\neq 0$).
Let $\lambda$ be the smallest constant such that $\Theta^m-\lambda \Xi^m\leq 0$ (resp. ${\rm mult}_{E^m}(\Theta^m-\lambda \Xi^m)\leq 0$,  where $E^m=\phi^m_*E$). Thus in both cases, there is component $E$ of $S'+\sum E_j$ such that ${\rm mult }_{E^m}(\Theta^m-\lambda \Xi^m)=0$, and  there is a family  of $\alpha ^m$-trivial curves $\{C^m_t\}$ covering $E^m$. We claim that $(K_{X^m}+\Delta ^m)\cdot C^m_t\geq 0$.  
    If this is not the case, then \[ [C^m_t]\in (\alpha ^m) ^\perp\cap \overline{\rm NA}(E^m)_{K_{E^m}+\Delta _{E^m}<0 },\] 
    where $K_{E^m}+\Delta _{E^m}=(K_{X^m}+\Delta ^m)|_{E^m}$. Since $\alpha^m$ is nef, by the cone theorem on $E^m$, there is a $(K_{E^m}+\Delta _{E^m})$-negative $\alpha^m$-trivial extremal ray, contradicting (5) above.
    If $C_t$ is the strict transform of $C^m_t$ on $E=(\phi^m) ^{-1}_*E^m$, then 
    \[0\geq (\Theta^m-\lambda \Xi^m)\cdot C^m_t= (K_{X^m}+\Delta ^m-\lambda \Xi^m) \cdot C^m_t\geq -\lambda \Xi^m \cdot C^m_t\geq -\lambda \Xi'\cdot C_t>0\]
    where the first inequality follows by our definition of $\lambda$, the second as $(K_{X^m}+\Delta ^m-\Theta ^m)\equiv _{\alpha ^m}0$, the third was observed above, the fourth as $p^*\Xi'- q^*\Xi^m\geq 0$ and ${\rm mult}_{E^m}(p^*\Xi'- 
    q^*\Xi^m)=0$ and the last as $(-\Xi'+t\alpha ')|_E$ is ample. This is the required contradiction and the claim is proven.

\end{proof}
\begin{claim}
    The map $\phi^m:X'\dasharrow X^m$ contracts every component of $\mu ^*S$ dominating $T$, and hence if $n(\alpha )=0$, then $\phi :X\dasharrow X^m$ is a bimeromorphic map that contracts $S$ (and no other divisors) and if $n(\alpha )=1$, then $\phi :X\dasharrow X^m$ contracts only $S$ and may extract some divisors of $X'$ which are contained in $\mu ^{-1}(S)$ that do not dominate $T$.
\end{claim}
\begin{proof}
    Recall that $F'\geq 0$ is a $\mu$-exceptional $\Q$-divisor such that $-F'$ is $\mu$-ample. Since $-S|_{S^\nu}\equiv_\alpha  \frac {-1}s(K_X+B)|_{S^\nu}$ %$\equiv_\alpha \frac {1}s\omega _{S^\nu}$
    is ample over $T$, then $\Xi':=\mu ^*S+\epsilon F'$ is a $\Q$-divisor such that $-\Xi'|_{S'}$ and $-\Xi'|_{E_j}$ are ample over $T$ for every component $E_j$ of ${\rm Ex}(\mu)$ and all $0<\epsilon \ll 1$. 
    
    We claim that $\phi^m$ is $\Xi'$ non-positive, i.e. if $p:W\to X'$ and $q:W\to X^m$ is the normalization of the graph of $\phi^m$, then we claim that $p^*\Xi'-q^*(\phi ^m_*\Xi')\geq 0$. Recall that $p$ and $q$ are isomorphisms over $X_{U}'=\mu^{-1}U=\mu^{-1}(X\setminus S)$, since so is $\phi^m$. We now check that $-p^* \Xi'$ is nef over $X^m$. To this end consider a curve $C$ on $W$ such that $q_*C=0$, then $p_*C\ne 0$ is contained in $\Supp(\mu^*S)$ and $\alpha '\cdot p_*C=\alpha ^m\cdot q_*C=0$.
    But then $p_*C$ is vertical over $T$ and so $-p^* \Xi'\cdot C=-\Xi'\cdot p_*C>0$; in particular, $-p^*\Xi'$ is nef over $X^m$.
    %As $-\mathcal F'$ is ample on every component of $\mu ^{-1}(S)=X'\setminus X'_U$, it follows that $-p^* \mathcal F'$ is nef over $X^m$. 
    Let $\Xi^m:=q_* p^*\Xi'=\phi^m_*\Xi'$, then $q^*\Xi^m-p^*\Xi'$ is $q$-exceptional and $q$-nef so that by the negativity lemma, $p^*\Xi'-q^*\Xi^m\geq 0$; this proves our claim.\\
    
    Suppose now that $\phi^m _*(\mu ^{*}S)$ dominates $T$ i.e. that $\phi^m _*(\mu ^{*}S)\to T$ is surjective and let $\lambda$ be the smallest constant such that $(\Theta^m-\lambda \Xi^m)_\textsubscript{hor}\leq 0$,  where $(\ldots )_\textsubscript{hor}$ denotes the horizontal components over $T$ i.e. those components that dominate $T$. Then there is a component $E^m$ of $\phi^m _*(\mu ^{*}S)$ dominating $T$  such that ${\rm mult }_{E^m}(\Theta^m-\lambda \Xi^m)=0$, and  there is a family  $C^m_t$ (contained in the fibers over $T$) of $\alpha ^m$-trivial curves covering $E^m$. We claim that $(K_{X^m}+\Delta ^m)\cdot C^m_t\geq 0$.  
    If this is not the case, then \[ [C^m_t]\in (\alpha ^m) ^\perp\cap \overline{\rm NA}(E^m)_{K_{E^m}+\Delta _{E^m}<0 }\] where $K_{E^m}+\Delta _{E^m}=(K_{X^m}+\Delta ^m)|_{E^m}$. Since $\alpha^m$ is nef, by the cone theorem on $E^m$, there is a $(K_{E^m}+\Delta _{E^m})$-negative $\alpha^m$-trivial extremal ray, contradicting (5) above.
    If $C_t$ is the strict transform on $E=(\phi^m) ^{-1}_*E^m$, then 
    \[0\geq (\Theta^m-\lambda \Xi^m)\cdot C^m_t= (K_{X^m}+\Delta ^m-\lambda \Xi^m) \cdot C^m_t\geq -\lambda \Xi^m \cdot C^m_t\geq -\lambda \Xi'\cdot C_t>0\]
    where the first inequality follows by our definition of $\lambda$, the second as $(K_{X^m}+\Delta ^m-\Theta ^m)\equiv _{\alpha ^m}0$, the third was observed above, the fourth as $p^*\Xi'- q^*\Xi^m\geq 0$ and ${\rm mult}_{E^m}(p^*\Xi'- q^*\Xi^m)=0$ and the last as $-\Xi'|_E$ is ample over $T$. This is the required contradiction and the claim is proven.
\end{proof}

We will now show that in fact $\phi ^m$ contracts $\mu ^*S$. To this end, it suffices to show that $\Theta^m=0$ or equivalently that $q^*\Theta^m=0$.
Since $q^*\Theta^m$ is effective and exceptional over $X$, by the negativity lemma it suffices to show that $q^*\Theta^m$ is nef over $X$. Let $C\subset W$ be a curve such that $\mu _*p_*C=0$. If $q_*C=0$, then $C\cdot q^*\Theta^m=0$. If instead $q_*C\ne 0$, then
       $\alpha ^m\cdot q_*C=\alpha \cdot \mu _* p_*C=0$. Clearly $q^*\Theta^m\cdot C\geq 0$ if $q_*C$ is not contained in any component $P$ of the support $\Theta^m$ (and hence of the support of $\lfloor \Delta ^m\rfloor$). So assume that $q_*C$ is contained in a component $P$ of $\Theta^m$. 
       Since $\Theta^m\equiv _{\alpha ^m}K_{X^m}+\Delta ^m$ and $(K_{X^m}+\Delta ^m)|_P$ is non-negative on $\alpha ^m$-trivial curves (by (5) above), then $q^*\Theta^m\cdot C=(K_{X^m}+\Delta ^m)\cdot q_*C \geq 0$. Therefore $q^*\Theta^m$ is nef over $X$ and hence $\Theta^m=0$ as observed above.

We will now show that the induced bimeromorphic map $\phi: X\dasharrow X^m$ is a morphism. Let $p:W\to X$ and $q:W\to X^m$ be a resolution of the graph of $\phi$. Pick $\omega ^m$ a K\"ahler class on $X^m$ and $E_1,\ldots , E_r$ the $p$-exceptional divisors generating $H^{1,1}_{\BC}(W)/p^*H^{1,1}_{\BC}(X)$ as in \cite[Lemma 2.32]{DH20}. Then we may write $q^*\omega ^m+\sum e_iE_i\equiv _X0$ for some $e_i\in\mbR$, and hence there is a $\omega\in H^{1,1}_{\BC}(X)$ such that $p^*\omega \equiv q^*\omega ^m+\sum e_iE_i$. %Note that $\sum e_iE_i\geq 0$ by the negativity lemma.
Pick $r\in \R$ such that $(\omega +rS)\cdot R=0$, then $p^*(\omega +rS)\equiv q^*\omega ^m+\sum e_iE_i+rp^*S$. Let $C$ be a $q$-exceptional curve. If $p_*C=0$, then $(q^*\omega ^m+\sum e_iE_i+rp^*S)\cdot C=(\omega+rS)\cdot p_*C=0$. If $p_*C\neq 0$, then 
$p^*\alpha \cdot C=q^*\alpha ^m\cdot C=0$ and so $p_*C\in R$ and hence $(\omega +rS)\cdot p_*C=0$ so that $(q^*\omega ^m+\sum e_iE_i+rp^*S)\cdot C=0$. Recall that $\Theta^m=0$, in particular, $\phi$ contracts $S$ and doesn't extract any divisor. Thus
we have shown that $\sum e_iE_i+rp^*S$ is  $q$-exceptional and $q$-numerically trivial. Then by the negativity lemma $\sum e_iE_i+rp^*S=0$ and so $p^*(\omega +rS)\equiv q^*\omega ^m$. But then, if $C$ is $p$-exceptional and not $q$-exceptional, we have
\[0=C\cdot p^*(\omega +rS)=C\cdot q^*\omega ^m=q_*C \cdot \omega ^m>0\]
which is impossible. Since no such curves exist, then $\phi: X\dasharrow X^m$ is a morphism which we will now denote by $f:X\to Z$.

As we have seen above, $f|_U$ is an isomorphism and $f$ contracts a non-empty set of $\alpha$-trivial curves. Since $\overline{\rm NA}(X)\cap \alpha ^\perp=R$, then $f$ contracts the set of all curves in $R$ (and no other curves).
Since $K_X+B+\beta \equiv _Z 0$, then
$-(K_X+B)$ is $f$-ample. Then from \cite[Lemma 2.44]{DH20} it follows that $Z$ has rational singularities.
By \cite[Lemma 3.3]{HP16}, $\rho (X/Z)=1$ and $\alpha \equiv f^* \alpha _Z$ for some $\alpha _Z\in H^{1,1}_{\rm BC}(Z)$. By \cite[Theorem 2.29]{DHP22}, $\alpha _Z$ is K\"ahler
if and only if for any positive dimensional subvariety  $W\subset Z$,  $\alpha _Z^{\dim W}\cdot W>0$ holds.
If $W\not \subset f(S)$, then the strict transform $W'\subset X$ is not contained in $S$ and so $\alpha _Z^{\dim W}\cdot W=\alpha ^{\dim W'}\cdot W'>0$. Therefore, we may assume that $W=f(S)$ is a (projective) curve. Since $f$ is a projective morphism, $S$ is projective, and thus there is a (projective curve) $W'\subset S$ such that $f(W')=W$. Then $\alpha\cdot W'>0$, since by our construction above a curve in $X$ is contracted by $f$ if and only it is $\alpha$-trivial. Thus by the projection formula $\alpha_Z\cdot W>0$; this completes our proof.

\end{proof}

We conclude this section by proving the existence of a flipping contraction for non $\mbQ$-factorial $3$-folds.  
\begin{lemma}\label{l-mod}
    Let $(X,B)$ be a compact K\"ahler $3$-fold klt pair, and $\alpha$ a nef and big class such that ${\rm Null}(\alpha)=C$ is a finite union of curves. Then there exists a bimeromorphic morphism of normal K\"ahler varieties $\nu:X'\to X$ and an effective $\nu$-exceptional $\mbQ$-Cartier divisor $\Psi \geq 0$ such that $\omega '= \nu ^*\alpha -\Psi$ is K\"ahler and $\Psi=\nu ^{-1}(C)$. In particular, $-\Psi|_{{\rm Supp}(\Psi)}$ is ample.
\end{lemma}

% \begin{proof}[Alternate Proof]\footnote{Om: This argument does not give $\nu(\Supp\Psi)=C$.}
%     By Theorem \ref{thm:non-kahler-equal-null}, $E^{as}_{nK}(\alpha)=\Null(\alpha)=C$, and hence $\alpha$ is a modified K\"ahler class by definition, see \cite[Definition 2.2(i)]{Bou04}. Then the result follows from \cite[Lemma 2.35]{DH20}.
% \end{proof}

\begin{proof}
    Fix a K\"ahler form $\omega$ on $X$, and let $\mu:\tilde X\to X$ be a log resolution of $(X, B)$ and $C$. By Lemma \ref{lem:as-locus-is-analytic} and Theorem \ref{thm:non-kahler-equal-null}  there is a positive current $T\in \alpha$ with weak analytic singularities such that $E_+(T)=C$, and $T \geq \epsilon \omega$ for some $\epsilon>0$. By Lemma \ref{lem:resolution-of-current-singularities} we may assume that we can write $\mu ^* \alpha\equiv [\tilde\Theta]+\Phi$, where $\Phi\geq 0$ is an effective $\mbR$-divisor such that $\mu (\Supp \Phi)=C$, and $\tilde \Theta$ is a closed positive $(1,1)$ current such that $\tilde\Theta\geq \epsilon \mu^*\omega$ and the class $[\tilde \Theta-\epsilon \mu ^*\omega ]\in H^{1,1}_{\BC}(\tilde X)$ is  nef.
    In particular, $-\Phi$ is nef over $X$, and thus from the negativity lemma it follows that ${\rm Supp }(\Phi)=\mu ^{-1}(C)$.
    %Clearly, $\Supp(\Phi)\subset \mu^{-1}(C)$; for the reverse inequality, if there is a point $p\in \mu^{-1}(C)\setminus\Supp(\Phi)$, then let $\Gamma\subset X'$ be a vertical curve passing through $p$ such that $\Gamma\cap \Supp(\Phi)\neq \emptyset$ (such a curve exists as $\mu (\Supp(\Phi) )=C$). Thus $\Phi\cdot \Gamma>0$, contradicting the fact that $-\Phi$ is nef over $X$. 
  % , then there is a $\mu$-exceptional curve $D\subset \tilde X$ which properly intersects  ${\rm Supp}(\Phi)$. But then $\Phi\cdot C>0$, contradicting the fact that $-\Phi $  is nef over $X$. %\footnote{Om: Is this equality correct? I see that $\Supp(\Psi)\subset \mu^{-1}(C)$, but if there is a point $p\in \mu^{-1}(C)$ such that for all vertical curves $\Gamma$  in $\mu^{-1}(C)$ through $p$, $[\tilde\theta]\cdot\Gamma=0$ hold, then $\Psi\cdot \Gamma=0$; this does not imply that $\Gamma\subset \Supp(\Psi)$. }

    Let $\Delta:=\mu ^{-1}_*B+{\rm Ex}(\mu)$, then \[K_{\tilde X}+\Delta \equiv _X G:=\sum _{i=1}^k g_iG_i\geq 0,\]
    where ${\rm Supp }(G)={\rm Ex}(\mu)$, as $(X,B)$ is klt. Let $E=\sum_{i=1}^k e_iG_i $, where $e_i>0$, $E$ is $\mu$-exceptional, and  $-E$ is $\mu$-ample. Perturbing the coefficients of $E$, we may assume that $e_1,\ldots ,e_k$  are linearly independent over $\Q(g_1,\ldots, g_k)$. 
    
    We now run the $[\tilde \Theta]$-trivial $(K_{\tilde X}+\Delta)$-Minimal Model Program over $X$ with scaling of $-E$ as in \cite[Theorem 2]{Kol21c}. This means that at each step we only contract $(K_{\tilde X}+\Delta)$-negative extremal rays over $X$ that are $[\tilde \Theta]$-trivial.
    All steps of this MMP exists by \cite{Nak87}, \cite{DHP22} and \cite[Theorem 1.2]{Fuj22cc}.
    Note that since each step of this mmp is $E$ negative, the contracted locus is contained in the support of $E$ and hence in the support of $\lfloor \Delta\rfloor$.
Since the mmp for surfaces holds, termination follows easily from the usual arguments for special termination  \cite{Fuj07}.

    Let $\phi:\tilde X\dasharrow X'$ be the output of this MMP and $\nu :X'\to X$ the induced map. 
Following \cite[Theorem 2]{Kol21c}, we may assume that there is a $\nu$-exceptional $\mbQ$-Cartier divisor $F\geq 0$ such that %$-F$ is ample over $X$, and hence 
$-F+\lambda [\Theta']$ is K\"ahler over $X$ for some $\lambda>0$, where $\Theta':=\phi_*\tilde\Theta$. Note that every step of this MMP  is also $[\tilde \Theta-\epsilon \mu ^*\omega]$-trivial, and hence the class $[\Theta '-\epsilon \nu ^* \omega] $ is nef.

    Let $U=X\setminus C$, $\tilde U=\mu ^{-1}(U)$, and $U'=\nu ^{-1}(U)$, then $\Phi|_{\tilde U}=0$ and so $[\tilde \Theta]|_{\tilde U}\equiv _U 0$, and hence over $U$ this is the usual relative MMP. Since $G'|_{U'}\geq 0$, where $G'=\phi _*G$, and $G'|_{U'}$ is nef, by the negativity lemma it follows that $G'|_{U'}= 0$. Thus, there are no $\nu |_{U'}$-exceptional divisors. As $-F|_{U'}$ is relatively ample, we have $U'=U$.
    
    It then follows that, for $0<\delta\ll 1$ the class of 
    \[(1+\lambda \delta)\Theta '-\delta F=\epsilon \nu ^*\omega-\delta(F-\lambda \Theta ')+(\Theta'-\epsilon \nu ^*\omega)\]
    is K\"ahler (as $[\Theta '-\epsilon \nu ^* \omega]$ is a nef class and $[\epsilon \nu ^*\omega-\delta (F-\lambda \Theta')]$ is a K\"ahler class for $0<\delta\ll 1$). We let $\omega ':=[\Theta '-\frac\delta {1+\lambda \delta}F]$ and $\Psi =\Phi '+\frac\delta {1+\lambda \delta}F$. Then
     \[\nu ^*\alpha =\phi _*(\mu ^*\alpha)=\phi _*([\tilde \Theta] +\Phi )=[\Theta']+\Phi '=\omega '+\Psi,\] 
     and the claim follows.  
     Note that perturbing $\Psi$ and $\omega '$, we may assume that $\Psi$ is a $\Q$-divisor.
\end{proof}

\begin{theorem}\label{thm:contraction-of-small-locus}
    Let $(X,B)$ be a compact K\"ahler $3$-fold klt pair and $K_X+B$ pseudo-effective. Suppose that there is a K\"ahler class $\omega$ such that $\alpha =K_X+B+\omega$ is a nef and big class but not K\"ahler, and ${\rm Null}(\alpha)=C$ is a finite union of curves. Then there exists a bimeromorphic morphism $f:X\to Z$ of normal compact K\"ahler varieties and a K\"ahler 
    class $\alpha _Z$ on $Z$ such that $\alpha= f^*\alpha_Z$; in particular, $\Ex(f)=\Null(\alpha)$.
\end{theorem}
\begin{proof}
    By Lemma \ref{l-mod}, there is a bimeromorphic morphism of normal compact K\"ahler varieties $\nu :X'\to X$ and an effective $\nu$-exceptional $\Q$-divisor $\Psi$ such that $\nu (\Psi)=C$ and $-\Psi|_{{\rm Supp}(\Psi)}$ is ample.
    By Lemma \ref{l-cont}, there is a bimeromorphic morphsim $g:X' \to Z$ to a normal compact analytic variety $Z$ such that $g({\rm Supp}(\Psi ))$ is a finite set of points and $X\setminus\Supp(\Psi)\to Z\setminus g(\Supp(\Psi))$ is an isomorphism. By the rigidity lemma
(see \cite[Lemma 4.1.13]{BS95}), we obtain a bimeromorphic morphism $f : X \to Z$ such that $f\circ\nu=g$. In particular, $\Ex(f)=C=\Null(\alpha)$. Thus, $-(K_X+B)$ is $f$-ample, hence $Z$ has rational singularities by \cite[Lemma 2.44]{DH20}. Then by \cite[Lemma 3.3]{HP16}, there is an $(1,1)$ class $\omega_Z\in H^{1,1}_{\BC}(Z)$  such that $\alpha=f^*\omega_Z$. Finally, by the projection formula and \cite[Theorem 2.29]{DHP22} it follows that $\omega_Z$ is a K\"ahler class. 
\end{proof}

\section{Cone Theorem and Minimal Models}\label{sec:cone-and-mm}
In this section we will establish the cone theorem and existence of minimal models for klt pairs $(X, B)$ of dimension $3$ such that $K_X+B$ is pseudo-effective.\\

Let $K\subset \R ^n$ be a compact convex set. An \textit{extremal point} of $K$ is a point  $x$ such that if $x=tx_1+(1-t)x_2$ for $x_i\in K$, then $x=x_1$ or $x=x_2$; equivalently $K\setminus \{x\}$ is convex. An \textit{exposed point} of $K$ is a point $x\in K$ such that $\{x\}=K\cap \{f=0\}$ where $\{f=0\}$ is a supporting hyperplane for $K$. Every exposed point is extremal but not vice versa. We let ${\rm ext}(K)$ be the set of extremal points, ${\rm exp}(K)$ the set of exposed points and we denote by ${\rm conv}(X)$ the convex hull of a set $X\subset \R ^n$ and ${\rm cl}(X)$ the closure of $X$. For a detailed a discussion on these topics see \cite[Chapter 1]{HW20}. We need the following result:
\begin{theorem}\label{t-conv}    Let $K\subset \R ^n$ be a non-empty compact convex set. Then
\[{\rm cl(conv(exp}(K)))={\rm conv( ext}(K))=K.\]  
In particular, the sets ${\rm exp(K)}$ and ${\rm ext(K)}$ are both non-empty.
\end{theorem}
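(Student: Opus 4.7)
The plan is to establish the two equalities ${\rm conv}({\rm ext}(K))=K$ (Minkowski's theorem in finite dimensions) and ${\rm cl}({\rm conv}({\rm exp}(K)))=K$ (Straszewicz's theorem, combined with Minkowski). Since ${\rm exp}(K)\subset {\rm ext}(K)$ is immediate, once the first equality is in hand, the second reduces to the inclusion ${\rm ext}(K)\subset {\rm cl}({\rm exp}(K))$, because then
\[ K={\rm conv}({\rm ext}(K))\subset {\rm cl}({\rm conv}({\rm exp}(K)))\subset K. \]

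First I would prove Minkowski's theorem by induction on $d:=\dim({\rm aff}(K))$. The base case $d=0$ is trivial. For the inductive step, let $x\in K$ be a point which is not extremal. Choose any line $\ell\subset {\rm aff}(K)$ through $x$; by compactness $\ell\cap K=[a,b]$ is a (non-degenerate) segment containing $x$, and the endpoints $a,b$ lie in the relative boundary of $K$. The supporting hyperplane theorem applied inside ${\rm aff}(K)$ places $a$ and $b$ in proper faces $F_a,F_b$ of $K$. These are compact, convex, and of strictly smaller affine dimension, so by induction $a\in {\rm conv}({\rm ext}(F_a))$ and $b\in {\rm conv}({\rm ext}(F_b))$. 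A direct check shows ${\rm ext}(F)\subset {\rm ext}(K)$ for any face $F$ of $K$; hence $x=ta+(1-t)b\in {\rm conv}({\rm ext}(K))$. No closure is needed, since the convex hull of a compact subset of $\R^n$ is compact by Carath\'eodory.

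For the inclusion ${\rm ext}(K)\subset {\rm cl}({\rm exp}(K))$, I would fix $x\in {\rm ext}(K)$ and $\varepsilon>0$ and construct an exposed point of $K$ within distance $2\varepsilon$ of $x$. Let $A:=K\setminus B_\varepsilon(x)$ and $C:={\rm conv}(A)$, which is compact by Carath\'eodory. The key observation is that $x\notin C$: if $x=\sum \lambda_i a_i$ were a convex combination with $\lambda_i>0$ and $a_i\in A$, a standard induction on the number of summands (peeling off one summand and invoking the two-term definition of extremality) forces every $a_i=x$, contradicting $a_i\notin B_\varepsilon(x)$. Let $p$ be the nearest point of $C$ to $x$ and set $v:=x-p\neq 0$. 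The variational characterization of projection onto a closed convex set gives $\langle z-p,v\rangle\leq 0$ for all $z\in C$, while $\langle x-p,v\rangle=\|v\|^2>0$. Consequently the supporting face $F:=\{z\in K:\langle z,v\rangle=\max_K\langle\cdot,v\rangle\}$ lies entirely inside $B_\varepsilon(x)$.

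The last step is to upgrade a point of $F$ to a genuine exposed point of $K$. Pick a linear functional $\psi$ whose restriction to $F$ attains its maximum at a \emph{unique} point $y_F\in F$; such $\psi$ form a dense subset, since the set of linear functionals with non-unique maximum on a compact convex set is nowhere dense. For $\eta>0$ sufficiently small, the maximum of $v+\eta\psi$ on $K$ is then attained at a single point $y_\eta\in K$ which, by continuity, is close to $y_F$ and hence still lies in $B_{2\varepsilon}(x)$; by construction $y_\eta$ is exposed. Sending $\varepsilon\to 0$ yields $x\in {\rm cl}({\rm exp}(K))$, which finishes Straszewicz and hence the theorem. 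The main obstacle is precisely this perturbation step: an extremal point of an exposed face need not be exposed in $K$ (witness the tangency points in the convex hull of a disk with an exterior point), so one cannot simply apply Minkowski to $F$---one really has to perturb $v$ in order to force the maximum on $K$ to become a singleton.
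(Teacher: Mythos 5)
The paper does not actually prove this statement: it simply quotes Minkowski's theorem and Straszewicz's theorem from \cite[Theorems 1.21 and 1.23]{HW20}, so a self-contained argument like yours is a genuinely different route. Your Minkowski half is essentially correct, up to one slip: for a non-extremal $x$ you cannot take an \emph{arbitrary} line $\ell\subset{\rm aff}(K)$ through $x$ (for $x$ the midpoint of an edge of a cube there are lines in ${\rm aff}(K)$ meeting $K$ only in $\{x\}$); you should take the line through the two points $x_1\neq x_2$ witnessing non-extremality, for which the maximal segment $\ell\cap K=[a,b]$ is non-degenerate and its endpoints do lie in the relative boundary, and the rest of your induction goes through.

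The genuine gap is the perturbation step at the end of your Straszewicz argument. The assertion that, once $\psi$ has a unique maximizer on $F$, the functional $v+\eta\psi$ has a unique maximizer on $K$ for \emph{all} sufficiently small $\eta>0$ is false. Take $K={\rm conv}\left(\{(1/k,-1/k^{2}):k\geq 1\}\cup\{(0,0)\}\right)\subset\R^{2}$, $v=(0,1)$ (so $F=\{(0,0)\}$ and every $\psi$ has a unique maximum on $F$), and $\psi=(1,0)$: for $\eta_k=(2k+1)/(k(k+1))\to 0$ the functional $v+\eta_k\psi$ is maximized along the whole edge joining $(1/(k+1),-1/(k+1)^{2})$ to $(1/k,-1/k^{2})$, so arbitrarily small $\eta$ give non-unique maxima. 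Your supporting claim that functionals with non-unique maximum are nowhere dense is also not correct in general (this set can be dense; what is true is that it has Lebesgue measure zero, because the support function $h_K$ is convex, hence differentiable a.e., and its maximizer at $w$ is unique exactly when $h_K$ is differentiable at $w$). The step can be repaired, and then $\psi$ is not needed at all: choose $w_j\to v$ at which $h_K$ is differentiable, so each $w_j$ has a unique (hence exposed) maximizer $y_{w_j}$; any accumulation point of the $y_{w_j}$ maximizes $\langle\cdot,v\rangle$ and so lies in $F\subset B_\varepsilon(x)$, giving exposed points in $B_{2\varepsilon}(x)$. Alternatively, use the classical farthest-point argument: for $q=x-\lambda v$ with $\lambda\gg 0$ one has $\|z-q\|<\|x-q\|$ for every $z\in K\setminus B_\varepsilon(x)\subset C$, so any farthest point $y$ of $K$ from $q$ lies in $B_\varepsilon(x)$, and expanding $\|z-q\|^{2}=\|y-q\|^{2}+2\langle z-y,\,y-q\rangle+\|z-y\|^{2}\leq\|y-q\|^{2}$ shows that $\langle\cdot,\,y-q\rangle$ is strictly maximized at $y$, i.e.\ $y$ is exposed. (Also note the trivial omitted case $K\subset B_\varepsilon(x)$, where your $A$ is empty.)
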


\begin{proof}
    The equality ${\rm conv( ext}(K))=K$ is well known and due to Minkowski (see \cite[Theorem 1.21]{HW20}), and the equality ${\rm cl(conv(exp}(K)))=K$ follows from \cite[Theorem 1.23]{HW20}.
    \end{proof}
    
   \begin{proof}[Proof of Theorem \ref{t-cone}] By standard arguments (see the proof of \cite[Corollary 5.3]{DHP22}), passing to a strongly $\Q$-factorial model (which exists by \cite[Lemma 2.28]{DH20}), we may assume that $X$ is strongly $\Q$-factorial. Let $K$ be a compact convex slice of $\overline{\rm NA}(X)$ of Euclidean dimension $n-1$, where $n=\dim_{\mbR} H^{1,1}_{\BC}(X)$, i.e. the cone generated by $K$ is equal to $\NA(X)$, and $R$ an extremal ray corresponding to an exposed point $\{x\}\in K$. Then by definition, there is a $(1,1)$ class $\alpha\in H^{1,1}_{\BC}(X)$ such that $K$ is contained in the half-space $\alpha_{\>0}:=\{\gamma\in N_1(X): \alpha\cdot\gamma\>0\}$ and $\{x\}=K\cap\alpha^\bot$. In particular, $R=\overline{\rm NA}(X)\cap \alpha ^\perp$. Since $\alpha$ is non-negative on $\overline{\rm NA}(X)$, it is nef. If $(K_X+B)\cdot R<0$, then for $0<a\ll 1$, it follows easily that $\beta :=\alpha -a(K_X+B)$ is positive on $\overline{\rm NA}(X)\setminus\{0\}$, and hence K\"ahler (e.g. see the proof of \cite[Claim 3.25]{DHY23}). Thus $\alpha =a(K_X+B+\frac 1 a \beta)$. Replacing $\alpha$ by $\frac 1 a\alpha$ and letting $\omega = \frac 1 a \beta$ we may assume that $\alpha \equiv K_X+B+\omega$, where $\omega$ is a K\"ahler class. By Theorems \ref{t-flipcont} and \ref{t-divcont}, there is a corresponding flipping or divisorial contraction and hence $R=\R ^+[\Gamma ]$ for some curve $\Gamma$ on $X$. By \cite[Theorem 1.23]{DO23}, we may assume that $\Gamma$ is a rational curve and $0>(K_X+B)\cdot \Gamma \geq -6$. Since $X$ is a K\"ahler variety, by a Douady space argument there are at most countably many such extremal rays $\{\mbR^+\cdot[\Gamma _i]\}_{i\in I}$.
 Let $V:=\overline{\rm NA}(X)_{K_X+B \geq 0}+\sum _{i\in I}\mathbb R ^+[\Gamma _i]$. Following \cite[Lemma 6.1]{HP16} it suffices to show that $\overline{\rm NA}(X) = \overline V$. To the contrary assume that $\NA(X)\setminus \overline V\neq\emptyset$. Since $\overline V$ is a convex set (as so is $V$) and $\NA(X)$ is generated by $K$, from Theorem \ref{t-conv} it follows that there is an exposed point $x\in K$ not contained in $\overline V$. Let $R$ be the $(K_X+B)$-negative extremal ray corresponding to the exposed point $x\in K$. Then from what we have seen above, there is a rational curve $\Gamma$ such that $0<-(K_X+B)\cdot \Gamma\<6$ and $R=\mbR^+\cdot[\Gamma]$, but all such extremal rays are contained in $\overline V$ by definition of $V$, this is a contradiction.
 % By , if the inclusion 
 % $\overline V\subset \NA(X)$ is strict, then there is a $K_X+B$-negative extremal ray $R\in \NA(X)$ not contained in $\overline V$ corresponding to an exposed point $x\in K$. By what we have seen above, $R=\R ^+[\Gamma _i]$. Therefore $\overline V= \NA(X)$ and the claim follows.

% The rest of the proof is standard (see e.g. \cite[Lemma 61.]{HP16} and the proof of \cite[Theorem 1.6]{DHY23}) and we omit it.
 \end{proof}

% \begin{corollary}\label{cor:dlt-contraction}\footnote{Om: This is new.}
%     Let $(X, B)$ be a $\mbQ$-factorial compact K\"ahler $3$-fold dlt pair such that $K_X+B$ pseudo-effective. If $R$ is a $(K_X+B)$-negative extremal ray, then there is a projective bimeromorphic morphsim $f:X\to Z$ with connected fibers such that a curve $C\subset X$ is contracted by $f$ if and only if $[C]\in R$. 
% \end{corollary}
%Finally we are ready to prove Theorem \ref{thm:contractions}.
\begin{proof}[Proof of Theorem \ref{thm:contractions}]
Choose $0<\eps\ll 1$ so that $\omega+\eps B$ is K\"ahler. Thus replacing $B$ by $(1-\eps)B$ and $\omega$ by $\omega+\eps B$ we may assume that $(X, B)$ is a klt pair. By \cite[Lemma 2.27]{DH20} there is a small projective bimeromorphic morphism $g:X'\to X$ such that $X'$ is strongly $\mbQ$-factorial and $K_{X'}+B'=g^*(K_X+B)$. Let $\alpha':=g^*\alpha$ and run a $\alpha'$-trivial $(K_{X'}+B')$-MMP using the cone Theorem \ref{t-cone} and the contraction Theorems \ref{t-flipcont} and \ref{t-divcont}. Note that each step of this MMP preserves strong $\mbQ$-factoriality by \cite[Lemma 2.5]{DH20}. This MMP terminates by Theorem \ref{thm:termination}. Let $\phi:X'\bir X^n$ be the end result of this MMP, $B^n:=\phi_*B'$ and $\alpha^n:=\phi_*\alpha'$. We claim that $\Null(\alpha^n)$ does not contain any surface. Assume to the contrary that there is a surface $S^n\subset \Null(\alpha^n)$. Let $S\subset \Null(\alpha)$ be the strict transform of $S^n$ under the induced map $\psi:=\phi\circ g^{-1}:X\bir X^n$. We note that $S$ exists, since $\psi$ does not extract any divisor. Then from the proof of Claim \ref{clm:2-dim-null-locus} it follows that $\Null(\alpha)=S$, and $S$ is covered by an analytic family of $\alpha$-trivial curves $\{C_t\}_{t\in T}$ such that $S\cdot C_t<0$ for all $t\in T$. Now recall that $\alpha=K_X+B+\omega$, where $\omega$ is a K\"ahler class; let $\omega^n:=\psi_*\omega$ and $C^n_t:=\psi_*C_t$. Then $\{C^n_t\}_{t\in T}$ is a family of $\alpha^n$-trivial curves covering $S^n$. %such that $S^n\cdot C^n_t<0$ for all $t\in T$. 
Now since $\alpha^n=K_{X^n}+B^n+\omega^n$, we have $(K_{X^n}+B^n)\cdot C^n_t=(K_{X^n}+B^n)|_{S^n}\cdot C^n_t=-\omega^n|_{S^n}\cdot C^n_t<0$, since $\omega^n|_{S^n}$ is big as $\omega^n$ is a modified K\"ahler class. This is a contradiction to the fact that $(K_{X^n}+B^n)\cdot C\>0$ for all $\alpha^n$-trivial curves $C$.

 Now by Theorem \ref{thm:non-kahler-equal-null}, $\Null(\alpha^n)=E^{as}_{nK}(\alpha^n)$ is an analytic subset of $X^n$, and hence $\Null(\alpha^n)$ is either an empty set or a finite union of curves. First assume that $\Null(\alpha^n)$ is a finite union of curves. In this case by Theorem \ref{thm:contraction-of-small-locus}, there is a small proper bimeromorphic morphism $h:X^n\to Z$ to a normal analytic variety $Z$ and a K\"ahler class $\alpha_Z\in H^{1,1}_{\BC}(Z)$ such that $\alpha^n=h^*\alpha_Z$ and $\Ex(h)=\Null(\alpha^n)$. Let $W$ be a resolution of the graph of $\phi:X'\bir X^n$ and $p:W\to X'$ and $q:W\to X^n$ are projection morphisms; then $(g\circ p)^*\alpha=(h\circ q)^*\alpha_Z$. Let $C\subset W$ a curve contracted by $g\circ p$, then $(h\circ q)^*\alpha_Z\cdot C=(g\circ p)^*\alpha\cdot C=0$. Since $\alpha_Z$ is K\"ahler, this implies that $C$ is also contracted by $h\circ q$. Then by the rigidity lemma \cite[Lemma 4.1.13]{BS95}, there is a morphism $f:X\to Z$ such that $f\circ (g\circ p)=h\circ q$. In particular, $\alpha=f^*\alpha_Z$, and thus a curve $C\subset X$ is contracted by $f$ if and only if $\alpha \cdot C=0$, i.e. $[C]\in R$. The only thing that remains to be shown in this case is that $f$ is projective. To see this, observe that $-(K_X+B)\num_f \omega$, and hence $-(K_X+B)$ is a $\mbQ$-Cartier $f$-ample divisor on $X$, thus $f$ is projective.

 Now if $\Null(\alpha^n)=\emptyset$, then by \cite[Theorem 2.29]{DHP22}, $\alpha^n$ is a K\"ahler class on $X^n$. We rename $X^n$ by $Z$; then by a similar argument as above it follows that there is a projective bimeromorphic morphism $f:X\to Z$ such that $\alpha=f^*\alpha^n$. In particular, a curve $C\subset X$ is contracted by $f$ if and only if $\alpha\cdot C=0$, i.e. $[C]\in R$.

 Finally, if $f$ is a divisorial contraction, then by a similar argument as in the proof of \cite[Proposition 3.36]{KM98} it follows that $Z$ is $\mbQ$-factorial, and $(Z, f_*B)$ is dlt (see \cite[Lemma 3.38]{KM98}). If $f$ is a flipping contraction, then the flip $f^+:X^+\to Z$ is defined as $X^+:=\mbox{Projan}\oplus_{m\>0}f_*\mcO_X(md(K_X+B))$, where $d$ is the Cartier index of $K_X+B$; the finite generation of this ring is due to Shokurov, see \cite[Theorem 4.3]{CHP16}, also, for a more general result in all dimensions see \cite[Theorem 1.3]{DHP22}.

\end{proof}

\begin{proof}[Proof of Theorem \ref{t-mmp-mfs}] %Since $X$ is K\"ahler, there is a K\"ahler form $\omega$ such that $[K_X+B+\omega]$ is K\"ahler. Perturbing $\omega$, we may assume that $[\omega ]\in H^{1,1}_{\rm BC}(X)$ is very general. By Lemma \ref{l:mmp-scaling} we may run the $K_X+B$ mmp with scaling of $\omega$ say $\phi :X\dasharrow X_m$. If $K_X+B$ is pseudo-effective, then $K_X+B+t\omega$ is big for any $t>0$ and so we obtain $\lambda _m=0$ and $K_{X_m}+B_m$ is nef. Otherwise,. $K_{X_m}+B_m$ is not pseudo-effective and hence not nef so that $\lambda _m>0$. By  \cite{HP15} and \cite[Theorem 1.2]{DH20}, in this case there is a Mori fiber space $X_n\to Z$.
Suppose that $K_X+B$ is not nef, then by Theorem \ref{t-cone} and its proof, there is a $(K_X+B)$-negative extremal ray $R$ with nef supporting class $\alpha$ such that $R=\overline{\rm NA}(X)\cap \alpha ^\perp$, and the corresponding flipping or divisorial contractions exist by Theorem \ref{thm:contractions}. Let $(X,B)\dasharrow (X_1,B_1)$ be the corresponding flip or divisorial contraction. We may replace $(X,B)$ by $(X_1,B_1)$ and repeat the above procedure. By termination of flips Theorem \ref{thm:termination}, after finitely many steps we obtain the required minimal model. 
\end{proof}

\bibliographystyle{hep}
\bibliography{4foldreferences}
\end{document}